\newtheorem{remark}{Remark}
\theoremstyle{plain}
\theoremstyle{plain}
\theoremstyle{plain}
\newtheorem{lemma}{Lemma}
\theoremstyle{plain}
\theoremstyle{plain}
\newcommand{\E}{\mathbb{E}}
\def\tr{\mathop{\text{tr}}\kern.2ex}
\def\E{{\mathbb E}}
\newcommand{\betas}{\beta ^*}
\def\sign{\mathop{\text{sign}}}
\def\supp{\mathop{\text{supp}}}
\def\rank{\mathrm{rank}}
\long\def\comment#1{}
\def\vec{\mathop{\text{vec}}}
\def\nuc{\mathop{\star}}
\def\oper{\mathop{\text{op}}}
\def\fro {\mathop{\text{fro}}}
\def\cS{{\mathcal{S}}}
\def\nuc{\mathop{\star}}
\def\oper{\mathop{\text{op}}}
\newcommand{\bel}{\begin{eqnarray}\label}
\newcommand{\eel}{\end{eqnarray}}
\newcommand{\bes}{\begin{eqnarray*}}
\newcommand{\ees}{\end{eqnarray*}}
\newcommand{\la}{\langle}
\newcommand{\ra}{\rangle}
\def\trace{{\textsc{trace}}}
\def\mat{\mathop{\textrm{Mat}}}
\def\##1\#{\begin{align}#1\end{align}}
\def\$#1\${\begin{align*}#1\end{align*}}
\title{On Stein's Identity and Near-Optimal Estimation in High-dimensional Index Models}
\author[1]{ Zhuoran Yang\thanks{zy6@princeton.edu}}
\author[1]{Krishnakumar Balasubramanian\thanks{kb18@princeton.edu}}
\author[1]{Han Liu\thanks{hanliu@princeton.edu}}
\affil[1]{Department of Operations Research and Financial Engineering, Princeton University}
\begin{document}

\title{On Stein's Identity and Near-Optimal Estimation in High-dimensional Index Models}



\maketitle

\begin{abstract}
We consider estimating the parametric components of semi-parametric multiple index models in a high-dimensional and non-Gaussian setting. Such models form a rich class of non-linear models with applications to signal processing, machine learning and statistics. Our estimators leverage the score function based first and second-order Stein's identities and do not require the covariates to satisfy Gaussian or elliptical symmetry assumptions common in the literature. Moreover, to handle score functions and responses that are heavy-tailed, our estimators are constructed via carefully thresholding their empirical counterparts. We show that our estimator achieves near-optimal statistical rate of convergence in several settings. We supplement our theoretical results via simulation experiments that confirm the theory.
\end{abstract}




\section{Introduction}\label{sec:intro}

Consider the semi-parametric index model relating the response ($Y$) and the covariate ($X$)~by 
\begin{align}\label{eq:mainmim}
Y = f \left( \langle \beta^*_1,X \rangle, \ldots, \langle \beta^*_k,X\rangle\right) + \epsilon,
\end{align}
where $X, \{\beta_\ell ^* \}_{\ell \in [k]} \in \mathbb{R}^d$ and $\epsilon$ is a zero-mean noise that is independent of $X$. Here the vectors $\{\beta^*_\ell\}_{\ell \in [k]}$ are the parametric components and the function $f$ is the nonparametric component or the link function. Such a  model is called as multiple index model (MIM) in the literature. In this work, given $n$ i.i.d samples $\{ X_i, Y_i\}_{i=1}^n$ from the above model, where $n < d$, we are concerned with estimating the parametric components $\{ \beta_\ell ^* \}_{\ell \in [k]}$ when $f$ is unknown. More importantly, we do not impose  the assumption that $X$ is  Gaussian or elliptically symmetric, which is commonly made in the literature. Two important special cases of our model include phase retrieval (in which $k=1$), popular in signal processing, and sufficient dimensionality reduction (in which $k\geq 1$), popular in machine learning and statistics. Motivated by these applications, we make a distinction between the case of $k=1$, which is also called as single index model (SIM), and $k>1$ in the rest of the paper.



Estimating the parametric components $\{ \beta_\ell^*\}_{\ell \in [k]}$ without depending on the exact form of the link function appears naturally in several situations. For example, in phase retrieval~\citep{jaganathan2015phase}, one-bit compressed sensing~\citep{boufounos20081} and sparse generalized linear models~\citep{loh2015regularized}, we are interested in recovering a true parameter  based on structured nonlinear measurements. In sufficient dimensionality reduction, where $k$ is typically a fixed number greater than one, but much less than $d$, we would like to estimate the projection onto the subspace spanned by the parametric components $\{\beta_\ell ^*\}_{\ell \in [k]}$ without depending on the specific form of the function $f$. Furthermore, in deep neural networks (DNN), which are cascades of the MIM, the nonparametric component corresponds to the activation function which is pre-specified and the task is to estimate the parametric components, which are used for prediction in the test stage. Hence, it is crucial to develop estimators for the linear component with  both statistical  accuracy and computational efficiency for a wide class of link functions. 




 Several subtle issues arise when we consider optimal estimation in SIM and MIM. Specifically, most existing results depend crucially on the assumption made on $X$ or $f$ and fail to hold when those assumptions are relaxed. Such issues arise even in low-dimensional settings, where $n>d$. Consider, for example, the case of $k=1$ and a known link function  $f(u) = u^2$. This corresponds to phase retrieval, which is a challenging inverse problem that has regained interest in the last few years along with the success of compressed sensing.  A straightforward way to estimate $\beta^*$ is to do nonlinear least squares regression~\citep{lecue2015minimax}, which is a  nonconvex optimization problem. \cite{candes2013phaselift} propose an estimator based on convex relaxations. Although their estimator is optimal when $X$ is sub-Gaussian, they are not agnostic to the link function, i.e., the same result does not hold if the link function is misspecified. 

Direct optimization of the nonconvex phase retrieval problem was considered by \cite{candes2015phasewf} and  \cite{sun2016geometric}, which propose estimators based on iterative algorithms that are statistical optimal. 
However, they rely on the assumption that $X$ is Gaussian. A careful look at their proofs reveal that extending them to a wider class of distributions is significantly challenging -- for example, they require sharp concentration inequalities for polynomials of degree four of $X$,  which would lead to suboptimal rate  even when   $X$ is sub-Gaussian. Furthermore, their results are not agnostic to the link function as well. Similar observations could be made for both convex~\citep{li2013sparse} and nonconvex estimators~\citep{cai2015optimal} for sparse phase retrieval in high dimensions. In addition, a surprising result for SIM was established in~\cite{plan2015generalized}. They show that when $X$ is Gaussian, for a class of unknown link functions, one could estimate $\beta^*$ at the optimal statistical rate with the convex Lasso estimator. Unfortunately, their assumption on the link function is rather restrictive and rule out several interesting models including phase retrieval. Furthermore, none of the above procedures are applicable to the case of MIMs.

\subsection{Motivation}
Our work is primarily motivated by the interesting phenomenon illustrated in~\citep{plan2015generalized} for a class of high-dimensional SIM. Below, we first briefly summarize the result from~\citep{plan2015generalized} and then provide our~\emph{alternative justification} for the same result via Stein's identity. We mainly leverage this alternative justification and propose our estimators for the more general setting we consider. Assuming,   for simplicity, we work in the one-dimensional setting and   are given $n$ i.i.d. samples from the SIM. Consider the least-squares estimator
\begin{align*}
\hat \beta_{LS} = \underset{\beta \in \mathbb{R}}{\argmin}~~ \frac{1}{n}\sum_{i=1}^n \left(Y_i - X_i\beta \right)^2.
\end{align*}
Note that the above estimator is the standard least-squares estimator assuming a linear model (i.e., identity link function). The surprising observation from~\citep{plan2015generalized} is that, under the \emph{crucial} assumption that $X$ is standard Gaussian, $\hat \beta _{LS}$ is a good   estimator  of  $\beta^*$ (up to a scaling) even when the data is generated from a nonlinear SIM. The same holds true for the high-dimensional setting when the minimization is performed in an appropriately constrained norm-ball (for example, the $\ell_1$-ball). Hence the theory developed for the linear setting could be leveraged to understand the performance in the SIM setting. Below, we give an alternative justification for the above estimator as an implication of   Stein's identity in the Gaussian case, which is summarized as follows.
\begin{proposition}[Gaussian Stein's Identity \citep{stein1972bound}]
Let $ X \sim N(0,1)$ and $g  : \mathbb{R} \to \mathbb{R}$ be a continuous function such that  $\E|g'(X)| \leq \infty$. Then we have $ \E[ g(X) X ]  = \E[ g'(X)]$.
\end{proposition}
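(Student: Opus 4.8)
The plan is to prove the Gaussian Stein identity $\E[g(X)X] = \E[g'(X)]$ for $X \sim N(0,1)$ by a direct integration-by-parts argument, exploiting the fact that the standard Gaussian density $\varphi(x) = (2\pi)^{-1/2} e^{-x^2/2}$ satisfies the first-order ODE $\varphi'(x) = -x\,\varphi(x)$. This is the key structural identity: it converts the weight $x\,\varphi(x)$ appearing in $\E[g(X)X] = \int g(x)\, x\, \varphi(x)\, dx$ into $-\varphi'(x)$, after which one integrates by parts to move the derivative onto $g$.

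\smallskip

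\noindent\textbf{Step 1.} Write $\E[g(X)X] = \int_{-\infty}^{\infty} g(x)\, x\, \varphi(x)\, dx = -\int_{-\infty}^{\infty} g(x)\, \varphi'(x)\, dx$, using $x\varphi(x) = -\varphi'(x)$.

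\smallskip

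\noindent\textbf{Step 2.} Integrate by parts on a finite interval $[-M, M]$:
\[
-\int_{-M}^{M} g(x)\,\varphi'(x)\, dx = -\bigl[ g(x)\varphi(x) \bigr]_{-M}^{M} + \int_{-M}^{M} g'(x)\,\varphi(x)\, dx .
\]
Here I would note that $g$ being $C^1$ (or absolutely continuous, so that the fundamental theorem of calculus applies) is what the hypothesis on $g$ is meant to guarantee.

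\smallskip

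\noindent\textbf{Step 3.} Let $M \to \infty$. The boundary term $g(M)\varphi(M) - g(-M)\varphi(-M)$ must be shown to vanish; the remaining integral converges to $\E[g'(X)]$ by the integrability assumption $\E|g'(X)| < \infty$ (dominated convergence). The boundary term vanishes because $g(x)$ grows at most like a polynomial-times-something controlled by $\E|g'(X)|<\infty$: concretely, $g(M) = g(0) + \int_0^M g'(t)\,dt$, so $|g(M)\varphi(M)| \le \bigl(|g(0)| + \int_0^M |g'(t)|\,dt\bigr)\varphi(M)$, and since $\int_0^M |g'(t)|\varphi(t)\,dt$ is bounded (as $\E|g'(X)|<\infty$) while $\varphi$ is decreasing on $[0,\infty)$, a short argument (e.g.\ Cauchy--Schwarz, or monotonicity of $\varphi$) shows $\int_0^M|g'(t)|\,dt = o(\varphi(M)^{-1})$, forcing $g(M)\varphi(M)\to 0$; symmetrically on the left.

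\smallskip

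\noindent\textbf{The main obstacle} is precisely Step 3: rigorously controlling the boundary term at $\pm\infty$ using only $\E|g'(X)| < \infty$ rather than a growth bound on $g$ itself. The clean way around it is to observe that, along some subsequence $M_j \to \infty$, we must have $|g(M_j)|\varphi(M_j) \to 0$ — otherwise $\int_0^\infty |g(x)|\, x\, \varphi(x)\,dx = \int_0^\infty |g(x)||\varphi'(x)|\,dx$ would diverge in a way incompatible with the finiteness of $\int |g'|\varphi$ obtained after integration by parts — and passing to the limit along that subsequence suffices since the other two terms converge unconditionally. Alternatively, one can simply strengthen the hypothesis to an explicit growth condition (e.g.\ $g$ of polynomial growth, which is what is actually used in the applications later in the paper) and make Step 3 entirely routine. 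I would present the integration-by-parts computation in full and then handle the boundary term with the subsequence observation, noting that under the mild regularity conditions relevant to our estimators the term vanishes outright.
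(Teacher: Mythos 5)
The paper does not prove this proposition; it is stated as a classical fact and attributed to \citet{stein1972bound}, so there is no in-paper argument to compare against. Your integration-by-parts proof is the standard one and is correct. Two remarks. First, the boundary-term control you flag as the main obstacle in Step 3 actually goes through unconditionally, not merely along a subsequence: writing $|g(M)|\varphi(M)\le |g(0)|\varphi(M)+\varphi(M)\int_0^M|g'(t)|\,dt$ and splitting the integral at $M/2$, the piece over $[0,M/2]$ is at most $\bigl(\varphi(M)/\varphi(M/2)\bigr)\int_0^\infty|g'(t)|\varphi(t)\,dt\cdot\sup_{t}1= e^{-3M^2/8}\,\E|g'(X)|\to 0$ (using $\varphi(t)\ge\varphi(M/2)$ there), while the piece over $[M/2,M]$ is at most $\int_{M/2}^\infty|g'(t)|\varphi(t)\,dt\to 0$ by integrability (using $\varphi(t)\ge\varphi(M)$ there); so the subsequence fallback is unnecessary, though it is also valid. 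Second, the classical route (Stein's own) sidesteps the boundary term entirely: write $\varphi(x)=\int_x^\infty t\varphi(t)\,dt$ for $x\ge 0$ (and symmetrically for $x<0$), substitute into $\int g'(x)\varphi(x)\,dx$, and apply Fubini, which is justified exactly by $\E|g'(X)|<\infty$; this yields $\E[g'(X)]=\E[X(g(X)-g(0))]=\E[Xg(X)]$ and simultaneously establishes that $\E|X(g(X)-g(0))|$ is finite. You are also right to note that the hypothesis as printed (continuity of $g$ plus ``$\E|g'(X)|\leq\infty$'') should really be absolute continuity of $g$ with $\E|g'(X)|<\infty$; your proof quietly uses the correct hypothesis.
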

Note that in our context, if we let $g(X) = f(\langle X,\beta \rangle)$, then we have $\E[ g'(X )] \propto \beta^* $ and $\EE[  f(X) X ]  = \EE [Y \cdot  X]  $. Now consider the following estimator, which is based on performing least-squares on the sample version of the above proposition:
\begin{align*}
\hat \beta_{SL} =\underset{\beta \in \mathbb{R}}{\argmin} ~~\frac{1}{n}\sum_{i=1}^n (Y_i X_i -\beta)^2
\end{align*}
Note that $\hat \beta_{LS}$ and $\hat \beta_{SL}$ are the same estimators assuming $X \sim N(0,1)$, as $n \to \infty$. This observation leads to an alternative interpretation of the estimator proposed by~\citep{plan2015generalized} via Stein's identity for Gaussian random variables. Thus it provides an alternative justification for why the linear least-squares estimator should work in the SIM setting.  Interestingly, a similar procedure based on second-order Stein's identity (see \S\ref{sec:model} for precise definitions) was used in~\cite{candes2015phasewf} to provide a favorable initializer for their gradient descent algorithm for phase retrieval. Our observation also provides an alternative interpretation of the initialization method used in~\cite{candes2015phasewf} by appealing to Stein's identity. These observations also naturally leads to leveraging non-Gaussian versions of Stein's identity for dealing with non-Gaussian covariates. Our estimators based on this motivation is described in detail in \S\ref{sec:firstordertheory} and \S\ref{sec:secondordertheory}. 


\subsection{Related Work}

The success of Lasso and related linear estimators in high-dimensions~\citep{buhlmann2011statistics}, also enabled the exploration of high-dimensional SIMs. Although, this is very much work in progress. As mentioned previously,~\cite{plan2015generalized} show that the Lasso estimator works for the SIMs in   high dimensions when the data is Gaussian. A more tighter albeit an asymptotic results under the same setting was proved in~\cite{thrampoulidis2015lasso}.  Very recently~\cite{goldstein2016structured} extend the results of~\cite{li1989regression} to the high dimensional setting but it suffers from similar problems as mentioned in the low-dimensional setting. \cite{neykov2016agnostic} considered a misspecified phase retrieval model with Gaussian covariates and established rates of convergence. For the case of monotone nonparametric component,~\cite{yang2015sparse} analyze a non-convex least squares approach under the assumption that the data is sub-Gaussian. However, the success of their method hinges on the knowledge of the link function.  Furthermore,~\cite{jiang2014variable, lin2015consistency,zhu2006sliced} analyze the sliced inverse regression estimator in the high-dimensional setting concentrating mainly on support recovery and consistency properties. Similar to the low-dimensional case, the assumptions made on the covariate distribution restrict them from several real-world applications involving non-Gaussian or non-symmetric covariate, for example high-dimensional problems in economics~\citep{fan2011sparse}. Furthermore, several results are established on a case-by-case basis for fixed link function. Specifically~\cite{boufounos20081, ai2014one} and~\cite{davenport20141} consider 1-bit compressed sensing and matrix completion respectively, where the link is assumed to be the sign function. Also,~\cite{waldspurger2015phase} and~\cite{cai2015optimal} propose and analyze convex and non-convex estimators for phase retrieval respectively, in which the link is the square function. All the above works, except~\cite{ai2014one} make Gaussian assumptions on the data and are specialized for the specific link functions. The non-asymptotic result obtained in~\cite{ai2014one} is under sub-Gaussian assumptions, but the estimator is not consistent. Finally, there is a line of work focusing on estimating both the parametric and the nonparametric component~\cite{kalai2009isotron, kakade2011efficient, alquier2013sparse, radchenko2015high}. We do not focus on this situation in this paper as mentioned before.

For multiple index models, relatively less work exist in the high-dimensional setting. In the low-dimensional setting, a line of work for estimation in MIMs is proposed by Ker-Chau Li, which include  inverse regression~\citep{li1991sliced}, principal Hessian directions~\citep{li1992principal} and regression under link violation~\citep{li1989regression}. The proposed estimators are applicable for a class of unknown link functions under the assumption that the covariate follows a Gaussian or symmetric elliptical distribution. Such an assumption is restrictive as often times the covariates are heavy-tailed or skewed~\citep{horowitz2009semiparametric, fan2011sparse}.  
Furthermore, they concentrate only on the low-dimensional setting establishing asymptotic statements. Estimation in high-dimensional MIM under the subspace sparsity assumption was considered in~\cite{chen2010coordinate}, where the results are asymptotic and the  proposed  estimators are not computable in polynomial time.

To summarize, all the above works require restrictive assumption on either the data distribution or on the link function. We propose and analyze an estimator for a class of (unknown) link functions for the case when the covariates are drawn from a non-Gaussian distribution -- under the assumption that we know the distribution \emph{a priori}.  Note that in several situations, one could fit specialized distributions, to real-world data that is often times skewed and heavy-tailed, so that it provides a good generative model of the data. Also, mixture of Gaussian distribution, with the number of components selected appropriately, approximates the set of all square integrable distributions to arbitrary accuracy (see for example~\cite{mclachlan2004finite}). Furthermore, since this is a density estimation problem it is unlabeled and there is no issue of label scarcity. Hence it is possible to get accurate estimate of the distribution in most situations of interest. Thus our work is complementary to the existing literature and provides an estimator for a class of models that is not addressed in the previous works.  

\subsection{Contributions}

As discussed before, there are several subtleties based on the interplay between the assumptions made on $X$ and $f$ when dealing with estimation in SIM and MIM. Thus an interesting question is, whether it is possible to estimate the linear components in SIMs and MIMs with milder assumptions on both $X$ and $f$ in the high-dimensional setting. In this work, we provide a partial answer to this question. We construct estimators that work for a wide class of link functions, including the phase retrieval link function, and for a large family of distributions of $X$, which is assumed to be known \emph{a priori}. We particularly focus on the case when $X$ follows a non-Gaussian distribution that need not be elliptically symmetric or sub-Gaussian, thus making our  method applicable to several situations not possible before. Our estimators are based on Stein's identity for non-Gaussian distributions, which utilizes the score function. Estimating with the score function is challenging due to their heavy tails.  In order to illustrate that, consider the univariate histograms provided in Figure-\ref{fig:gammascore}. The dark shaded, more concentrated one corresponds to the histogram of $10000$ samples from Gamma distribution with   shape and scale  parameters set to $5$ and $0.2$ respectively. The transparent histogram corresponds to the distribution of the score function of the same Gamma distribution. Note that even when the actual Gamma distribution is well concentrated, the distribution is the corresponding score function is well-spread and heavy-tailed. In the high dimensional setting, in order to estimate with the score functions, we require certain vectors or matrices based on the score functions to be well-concentrated in appropriate norms. In order to achieve that, we construct robust estimators via careful truncation arguments to balance the bias (due to thresholding)-variance (of the estimator) tradeoff and achieve the required concentration. In summary, our contribution are as follows:
\begin{itemize}
\item We construct estimators for the parametric component of a sparse SIM and MIM for a class of unknown link function under the assumption that the covariate distribution is non-Gaussian but known a priori. Our results are applicable for the case of vector, matrix or tensor valued covariates with appropriately defined structures to facilitate high-dimensional estimation. 
\item  We establish near-optimal statistical rates for our estimators. Our results complement the existing ones in the literature and hold in several case not possible before.
\item We provide alternative justifications based on  the  Stein's identity for the estimator used in~\cite{plan2015generalized} for sparse SIM and the initializer used in~\cite{candes2015phasewf} for phase retrieval. 
\item As a consequence of our results for SIM and MIM, we also obtain a near-optimal estimator for sparse PCA with heavy-tailed data in the moderate sample size regime.
\item We provide numerical simulations that confirm our theoretical results.
\end{itemize}

Parts of the results presented in this work, appeared in~\cite{yang2017high} and~\cite{yang2017estimating} previously.  

\begin{figure}[t]
\begin{center}
\includegraphics[scale=0.4]{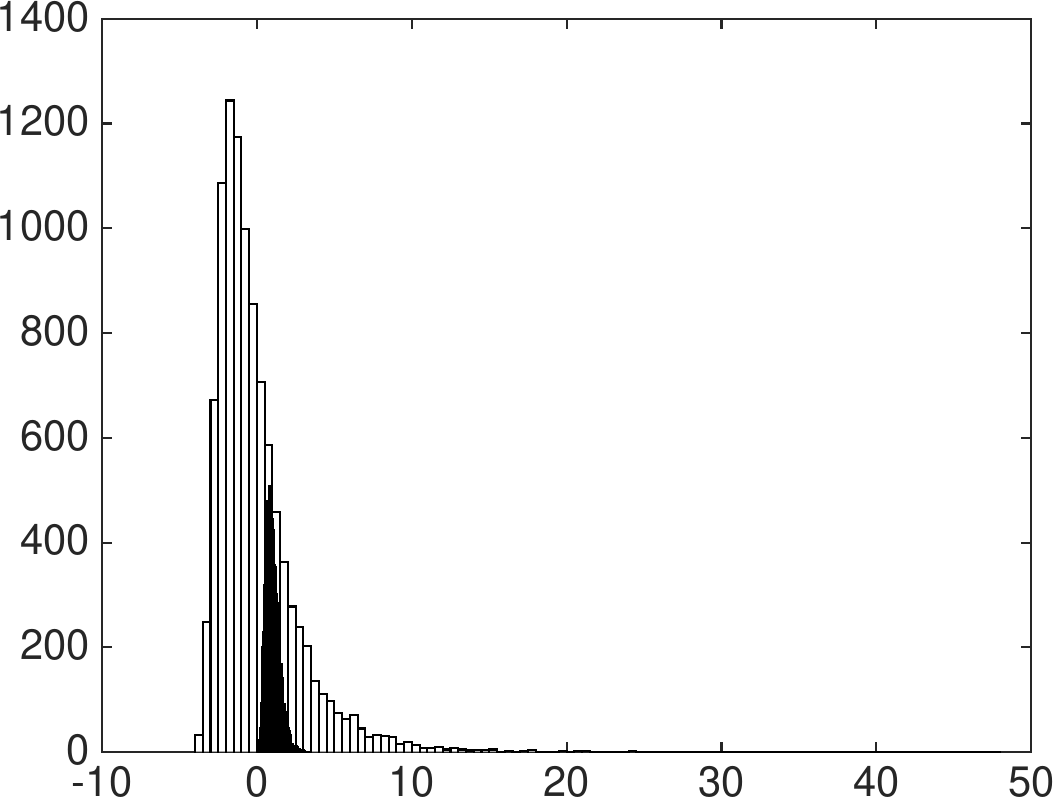}
\end{center}
\caption{Histogram of Score Function based on $10000$ independent samples from the Gamma distribution with shape $5$ and scale  $0.2$.  The dark histogram (we recommend the reader to zoom in to notice it) concentrated around zero corresponds to the Gamma distribution and the transparent histogram corresponds to the distribution of the score of the same Gamma distribution.}
\label{fig:gammascore}
\end{figure}

\subsection{Notations}
In this section, we introduce the notation and define the single index models. Throughout this work, we use $[n]$ to denote the set $\{ 1, \ldots, n\}$. In addition, for a vector $v \in \RR^d$, we denote by $\| v \|_{p}$ the $\ell_p$-norm of $v$ for any $p \geq 1$. We use $\cS^{d-1}$ to denote the unit sphere in $\RR^d$, which is defined as $\cS ^{d-1} = \{ v \in \RR^d \colon \| v \|_2 = 1 \}$. In addition, we define  the support of $v \in \RR^d$  as $\supp(v) = \{ j \in [d], v_j \neq 0 \}$.    Moreover, we denote the nuclear norm, operator norm, element-wise max norm and Frobenius norm of a matrix  $A\in \RR^{d_1 \times d_2}$ by $\| \cdot \|_{\nuc}$, $\| \cdot \|_{\oper}$, $\| \cdot \|_{\infty}$ and $\| \cdot \|_{\fro}$, respectively.   We denote by $\vec(A)$ the vectorization of matrix $A$, which is a vector in $\RR^{d_1\cdot d_2}$. For two matrices $A,B \in \mathbb{R}^{d_1 \times d_2}$ we define the trace inner product as $\la A,B \ra = \trace(A^\top B)$. Note that  it can be viewed as the  standard inner product between $\vec(A)$ and $\vec(B)$. In addition, for an univariate function $g \colon \RR \rightarrow \RR$, we denote by $g \circ   (v)$ and $g\circ(A)$ the output of applying  $g$  to each element of  a vector $v$ and a matrix $A$, respectively. Finally, for a random variable $X \in \RR$ with density $p$, we use $p^{\otimes d} \colon \RR^d \rightarrow \RR$ to denote the joint density of $\{X_1, \cdots, X_d\}$, which are $d$ identical copies of $X$. We also require some notations about tensors. We concentrate on fourth-order tensors for simplicity. For any  fourth-order tensor $ Z \in \RR^{d_1 \times d_2 \times d_3 \times d_4 }$, we denote its $(j_1,j_2, j_3, j_4)$-th entry by $Z (j_1, j_2, j_3, j_4)$. If $d_{\ell} = d $ for all $\ell \in [4]$, we denote the tensor as $Z \in \RR^{d ^\otimes 4}$. Similar to the matrix case, we define $\vec(Z) \in \RR^{d^4}$ as the vectorization of the tensor $Z$.  For two tensors $W,  Z \in \RR^{d^\otimes 4} $, we define their inner inner product  as
\#\label{eq:tensor_inner}
 \la Z, W \ra & = \vec(Z)^\top \vec(W) \\ & = \sum_{j_1 , j_2, j_3, j_4 \in [d]} Z(j_1, j_2, j_3, j_4) \cdot W(j_1, j_2, j_3, j_4) \nonumber
\#
The tensor Frobenius norm of  is also denoted by $\| \cdot  \|_{\fro} $.



\section{Index Models} \label{sec:model}
Now we are ready to define the precise statistical models that we consider in this work. As mentioned above, we consider the case of $k=1$ (SIM) and $k>1$ (MIM) separately. We primarily distinguish our models based on the assumption made on the link functions. We also require the following definition of score function of random variable.  Let $p \colon \RR^d \rightarrow \RR$ be a probability density function defined on $\RR^d$. The score function $S_{p} \colon \RR^d \rightarrow \RR$ associated to density $p$ is defined as  
\$
S_p(x) = - \nabla_x  [\log p(x) ] = - \nabla _x p(x) / p(x).
\$
Note that in the above definition,  the derivative is  taken with respect to $x$. This is different from the more traditional definition of the score function where the density belongs to a parametrized family and the derivative is taken with respect to the parameters. In the rest of the paper to simplify the notation, we omit the subscript $x$ from $\nabla_x$. We also omit the subscript $p$ from $S_p$ when the underlying density $p$ is clear from the context.

\subsection{First-order Link Functions}\label{sec:firstorder}
We first discuss a class of SIM that are based on a certain first-order link functions. We discuss the motivation for our estimator, which automatically highlights the first-order assumption on the link function as well. Recall that our estimators are based on Stein's identity. To begin with, we present the first-order non-Gaussian Stein's identity.

\begin{proposition}[First-order Stein's Identity~\citep{stein2004use}] \label{prop:1stein}
Let $ X \in \RR^d$ be a  real-valued random vector with density $p$. Assume that $p\colon \RR^d \rightarrow \RR$ is differentiable. In addition, let  $g : \mathbb{R}^d \to \mathbb{R}$ be a continuous function such that  $\EE [ \nabla g (X)]  $ exists. Then it holds that 
$$ \EE [ g(X) \cdot S(X) ]  = \EE [ \nabla g(X) ] ,$$
where $S(x) = -  \nabla p(x) / p(x)$ is the score function of $p$.
\end{proposition}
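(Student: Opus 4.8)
The plan is to prove the multivariate first-order Stein's identity by an integration-by-parts argument, reducing it component by component to the one-dimensional case. Write $g(X)\cdot S(X)$ coordinatewise: the $j$-th entry is $g(X)\,S_j(X)$ where $S_j(x) = -\partial_{x_j} p(x)/p(x)$. Then
\$
\EE[g(X)\,S_j(X)] = \int_{\RR^d} g(x)\,\Bigl(-\frac{\partial_{x_j} p(x)}{p(x)}\Bigr) p(x)\,dx = -\int_{\RR^d} g(x)\,\partial_{x_j} p(x)\,dx,
\$
so the claim reduces to showing $-\int_{\RR^d} g(x)\,\partial_{x_j} p(x)\,dx = \int_{\RR^d} \partial_{x_j} g(x)\,p(x)\,dx$, which is exactly integration by parts in the $x_j$ variable (holding the other coordinates fixed and then integrating over them by Fubini). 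Concretely, first I would fix $x_{-j} = (x_1,\dots,x_{j-1},x_{j+1},\dots,x_d)$ and integrate over $x_j \in \RR$:
\$
-\int_{\RR} g(x)\,\partial_{x_j} p(x)\,dx_j = -\Bigl[\,g(x)\,p(x)\,\Bigr]_{x_j=-\infty}^{x_j=+\infty} + \int_{\RR} \partial_{x_j} g(x)\,p(x)\,dx_j.
\$
The boundary term vanishes provided $g(x)p(x) \to 0$ as $x_j \to \pm\infty$; this decay is the substantive regularity requirement hidden behind the hypothesis that $p$ is differentiable and that $\EE[\nabla g(X)]$ exists (together with integrability of $g\cdot S$, which is implicit in the statement $\EE[g(X)S(X)]$ being well-defined). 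Then I would integrate the surviving term over $x_{-j}$ and invoke Fubini's theorem to recombine, yielding $\EE[\partial_{x_j} g(X)]$, i.e., the $j$-th component of $\EE[\nabla g(X)]$. Stacking the $d$ coordinates gives the vector identity.

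The main obstacle is justifying the vanishing of the boundary terms and the application of Fubini under the stated (fairly minimal) hypotheses. In a clean exposition one typically either (i) assumes additional tail/integrability conditions on $g$ and $\nabla g$ making the boundary terms provably zero and all integrals absolutely convergent, or (ii) argues by a truncation/approximation scheme — multiply by a smooth cutoff $\chi_R$ supported on $\|x\| \le R$, apply integration by parts where boundary terms are exactly zero, and pass $R \to \infty$ using dominated convergence, with the dominating function supplied by the assumed existence of $\EE[\nabla g(X)]$ and $\EE[|g(X)S(X)|]$. I expect the paper to invoke Proposition~\ref{prop:1stein} as a known result from~\citep{stein2004use} and treat these analytic details as standard, so the proof proposal here is essentially the integration-by-parts skeleton above, with the regularity caveats flagged rather than belabored.
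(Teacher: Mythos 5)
Your integration-by-parts argument is correct and is exactly the standard derivation of this identity; the paper itself offers no proof, simply citing \citet{stein2004use}, and the cited treatment proceeds by the same coordinatewise integration by parts with vanishing boundary terms that you outline. Your flagging of the regularity caveats (decay of $g\cdot p$ at infinity, absolute integrability for Fubini) is appropriate, since the proposition as stated in the paper elides these conditions.
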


One could apply the above Stein's Identity to SIMs to obtain an estimate of $\beta^*$. To see this, note that when $X \sim N(0, I_d)$ we have $S(x) = x$,  $\forall x \in \RR^d$.  In this case, as $\EE(\epsilon) =0$,  we have $$ \EE(Y \cdot X ) = \EE [ f( \la X, \beta^* \ra) \cdot X] = \EE[ f'( \la X, \betas \ra ) ] \cdot  \betas .$$
Hence one could estimate $\beta^*$ based on estimating the moment $\EE (Y \cdot X)$. This  observation leads to the  estimator   proposed in~\cite{plan2015generalized}. This motivates the following definition of SIM with first order link functions. 

\begin{definition}[Vector SIM with First-order Links] \label{def:simmodel}
Under this model, we assume that the response variable $Y \in \RR$ and the covariate $X\in \RR^d$ are linked via
\#\label{eq:simmodel}
Y = f  (   \la X,  \betas \ra ) + \epsilon,
\#
where $f\colon \RR  \rightarrow \RR$ is an unknown univariate function, $\betas \in \RR^d$ is the parameter of interest, and $\epsilon \in \RR$ is the exogenous random noise such that $\EE(\epsilon) =0$. In addition, we assume that the entries of  $X $ are i.i.d. random variables with density $p_0$ and that $\betas $ is $s^*$-sparse, that is, $\betas$ contains only $s^*$ nonzero entries such that $s^* \ll n \ll d$. Moreover, since the norm of $ \betas $ can be absorbed in $f$, we further let  $\| \betas\|_2 = 1$ for identifiability. Finally, we assume $f$ and $X$ are such that $\EE[ f'( \la X, \betas \ra ) ]  \neq 0$.
\end{definition}

Note that the SIM depends only on covariate only via inner products. Hence it is natural to generalize it to the case of matrix and tensor valued covariates. To enable estimation in a high-dimensional setting, we enforce low-rank constraints that we describe below. 
\begin{definition}[Matrix SIM with First-order Links]
For the low-rank case SIM, we assume that $\beta ^* \in \RR^{d_1 \times d_2}$ has rank $r ^* \ll \min \{ d_1, d_2\}$. In this scenario,  $X \in \RR^{d_1 \times d_2}$ and the inner product  in \eqref{eq:simmodel} is $\la  X , \beta^* \ra = \trace(X^{\top}\beta^*).$ For model identifiability, we further  assume that $\| \beta ^* \|_{\fro} = 1$, similar to the sparse case. Finally, we assume $f$ and $X$ are such that $\EE[ f'( \la X, \betas \ra ) ]  \neq 0$.
\end{definition}
Before we lay out the first-order low-rank tensor single index model, we first introduce additional notation for tensors. Denote by $ u  \otimes v   \otimes s \otimes t \in\mathbb{R}^{d ^ \otimes 4 }  $ a rank-one tensor. The minimum value of $r$ such that the tensor $Z$ could be written as a summation of $r$ rank-one tensors, i.e.,
$
Z = \sum_{j= 1}^r  u_j \otimes v_j \otimes s_j \otimes  t_j,
$
is called as the CP-rank of the tensor, denoted by $\rank_{CP}(Z) = r$.  We now describe the low-rank tensor model that we consider in this work.

\begin{definition}[Tensor SIM with First-order Links]\label{def:tensorcpranksim}
For the low-rank Tensor SIM model, we assume that $\beta ^* \in \RR^{d^{\otimes 4}}$ and has CP-rank, $\rank_{CP}(\beta^*)=r^*$. In this scenario, $X \in \RR^{d^{\otimes 4}}$ and the inner product  in \eqref{eq:simmodel} is understood as defined in \eqref{eq:tensor_inner}. For model identifiability, we further  assume that $\| \beta ^* \|_{\fro} = 1$, similar to the matrix case. Finally, we assume $f$ and $X$ are such that $\EE[ f'( \la X, \betas \ra ) ]  \neq 0$.
\end{definition}

\subsection{Second-order Link Functions}
In the above models, it is  crucial that  $\EE[ f'( \la X, \betas \ra ) ]  \neq 0$, for it to work.  Such a restriction prevents it from being applicable to some widely used cases of SIM, for example, phase retrieval where $f $ is the quadratic function. 
This limitation of the first order Stein's identity, motivates us to examine the second order Stein's identity which is summarized below.
\begin{proposition}[Second-order Stein's Identity~\citep{janzamin2014score}]\label{prop:2stein}
Assume  that the density of $X$ is twice differentiable. In addition, we define the second-order score function  $T\colon \RR^d \rightarrow \RR^{d\times d}$ as $$T(x) = \nabla ^2 p(x) / p(x).$$ Then,  for any  twice differentiable function
$g: \mathbb{R}^d \rightarrow \mathbb{R}$ such that  $\EE [ \nabla ^2 g(X) ]$ exists,  we have
\#\label{eq:second_stein_result}
 \EE \bigl [ g(X)  \cdot T(X) \bigr ]   =  \EE \bigl [ \nabla ^2 g(X)  \bigr ]  . \#
\end{proposition}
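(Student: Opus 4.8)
The plan is to prove the second-order Stein's identity \eqref{eq:second_stein_result} by reducing it to the first-order identity (Proposition~\ref{prop:1stein}) applied in a suitable way, or alternatively by a direct integration-by-parts argument. I will sketch the integration-by-parts route first since it is the most transparent. Writing the expectation as an integral, for the $(j,k)$-th entry we have
\#
\EE\bigl[g(X)\cdot T(X)\bigr]_{jk} = \int_{\RR^d} g(x)\, \frac{\partial^2 p(x)}{\partial x_j\, \partial x_k}\, dx .
\#
The goal is to move both derivatives from $p$ onto $g$ by integrating by parts twice, once in the $x_k$ variable and once in the $x_j$ variable. Each integration by parts produces a boundary term at infinity which must be argued to vanish; this is where the decay/integrability hypotheses (existence of $\EE[\nabla^2 g(X)]$, together with implicit tail conditions on $p$ and its derivatives) enter. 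After the first integration by parts in $x_k$ we obtain $-\int g(x)\,\partial_k\!\big(\text{something}\big)$... more precisely $\int (\partial_k g)(x)\,(\partial_j p)(x)\,dx$ after rearranging, and a second integration by parts in $x_j$ yields $\int (\partial_j\partial_k g)(x)\, p(x)\, dx = \EE[\nabla^2 g(X)]_{jk}$, as desired.

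An alternative, perhaps cleaner, approach is to apply the first-order identity twice. Fix the function $g$ and consider, for each fixed coordinate $k$, the vector-valued function $h^{(k)}(x) := (\partial_k g)(x)$. Under the hypothesis that $\nabla^2 g$ is integrable against $p$, Proposition~\ref{prop:1stein} applied to $h^{(k)}$ gives $\EE[h^{(k)}(X)\,S(X)] = \EE[\nabla h^{(k)}(X)] = \EE[\nabla \partial_k g(X)]$, i.e. the $k$-th column of $\EE[\nabla^2 g(X)]$. It then remains to relate $\EE[(\partial_k g)(X)\,S(X)]$ to $\EE[g(X)\,T(X)]$; this is done by applying the first-order identity once more, now to the product $g(x)\,S_j(x)$ in the $k$-th variable, or by directly manipulating $\nabla^2 p / p = \nabla(\nabla p)/p$ and writing $T(x) = \nabla S(x) + S(x)S(x)^\top$ — wait, one must be careful with signs: since $S = -\nabla p/p$, we have $\nabla S = -\nabla^2 p/p + (\nabla p/p)(\nabla p/p)^\top$, so $T = \nabla^2 p/p = -\nabla S + SS^\top$. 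Substituting and collecting terms, the two applications of the first-order identity combine to give exactly \eqref{eq:second_stein_result}.

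The main obstacle, in either route, is the rigorous justification that the boundary terms vanish and that all the integrals and differentiations can be interchanged (Fubini/Leibniz). The proposition as stated hides the precise regularity and decay assumptions on $p$ needed for this; a fully rigorous proof would either invoke a dominated-convergence argument using the assumed existence of $\EE[\nabla^2 g(X)]$ and mild tail decay on $p$, $\nabla p$, or simply cite~\cite{janzamin2014score} for the technical conditions. I expect the paper's proof to take the integration-by-parts route and to be brief, leaning on these references; accordingly I would present the two-integration-by-parts computation as the core, flag the boundary-term vanishing as the point requiring the regularity hypotheses, and note the identity $T = -\nabla S + SS^\top$ as the bridge to the first-order version for readers who prefer that derivation.
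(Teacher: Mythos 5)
The paper does not prove this proposition at all: it is quoted as a known result and attributed to \citet{janzamin2014score}, so there is no in-paper argument to compare against. Your double-integration-by-parts derivation is the standard proof and is correct in substance: for each $(j,k)$ one writes $\int g\,\partial_j\partial_k p\,dx$, moves one derivative onto $g$, then the other, and the two sign flips cancel to give $\int \partial_j\partial_k g\, p\,dx$. One small slip: after the first integration by parts in $x_k$ the intermediate term is $-\int (\partial_k g)(\partial_j p)\,dx$, not $+\int (\partial_k g)(\partial_j p)\,dx$; with your stated (positive) intermediate, the second integration by parts would produce $-\EE[\partial_j\partial_k g(X)]$, so the signs as written are internally inconsistent even though the final identity you assert is right. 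Your alternative route is also sound: with $S=-\nabla p/p$ one has $T = SS^\top - \nabla S$ (exactly the decomposition the paper itself uses in \eqref{eq:second_score}), and applying Proposition~\ref{prop:1stein} once to $\partial_k g$ and once to $g\,S_j$ gives $\EE[g\,T_{jk}] = \EE[\partial_k g\cdot S_j] = \EE[\partial_j\partial_k g]$. You are right that the only genuine technical content is the vanishing of the boundary terms and the interchange of differentiation and integration, which the proposition's hypotheses leave implicit and which the paper delegates to the cited reference.
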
 

Back to the phase retrieval example, when   $X \sim N(0, I_d)$, the second order score function now becomes  $T(x) = x x^\top - I_d,$ $\forall x \in \RR^d$. Setting $g(x) =  \la x , \betas \ra ^2$ in \eqref{eq:second_stein_result},  we have
\#\label{eq:phase_stein}
\EE [ g(X) \cdot T(X) ] &=\EE[ g(X) \cdot ( X X ^\top - I)] \\
& = \EE [ \la X , \betas \ra ^2 \cdot ( X X ^\top - I)] = 2 \betas {\betas } ^\top . \nonumber
\#
Thus for phase retrieval, one could extract $\pm \betas$ based on second order Stein's identity even in the situation where the first order Stein's identity fails. Indeed, \eqref{eq:phase_stein} used in~\cite{candes2015phasewf} \emph{implicitly} to provide a spectral initialization for the Wirtinger flow algorithm in the case of Gaussian phase retrieval. Here, we provided an alternative justification based on Stein's identity, for why such an initializer works. Motivated by the this observation, we propose to use the second order Stein's identity to  estimate the parametric component of SIMs and MIMs with a class of unknown link functions with non-Gaussian covariates. The precise statistical models that we consider are defined as follows.
\begin{definition}[Vector SIM with Second-order Links] \label{def:simmodel}
Under this model, we assume that the response variable $Y \in \RR$ and the covariate $X\in \RR^d$ are linked via
\#\label{eq:simmodel2}
Y = f  (   \la X,  \betas \ra ) + \epsilon,
\#
where $f\colon \RR  \rightarrow \RR$ is an unknown univariate function, $\betas \in \RR^d$ is the parameter of interest, and $\epsilon \in \RR$ is the exogenous random noise such that $\EE(\epsilon) =0$. In addition, we assume that the entries of  $X $ are i.i.d. random variables with density $p_0$ and that $\betas $ is $s^*$-sparse, that is, $\betas$ contains only $s^*$ nonzero entries. Moreover, since the norm of $ \betas $ can be absorbed in $f$, we further let  $\| \betas\|_2 = 1$ for identifiability. Finally, we assume $f$ and $X$ are such that $ \EE[ f''( \la X, \betas \ra )] >  0$.
\end{definition}
Note that in the definition of the SIMs, we require that $ \EE[ f''( \la X, \betas \ra )]  $ positive. Since if $ \EE[ f''( \la X, \betas \ra )]  $ is negative ,  we could always replace $f$ by $-f$ by flipping the sign of $Y$, we  essentially assume that   $ \EE[ f''( \la X, \betas \ra )]  $ is nonzero.  Intuitively, such restriction on $f$  implies that the second order moments contains the information of $\beta^*$, thus we call such a function the second order link. Similar to the first-order case, one could define matrix and tensor versions of the second-order SIMs but we do not concentrate on such models in this work.  Thus far, we considered SIMs. We now define a class of MIMs with second order links. For MIMs the notion of first order link functions is naturally not sufficient to estimate the projector onto the subspace.  
\begin{definition}[MIM with Second-order Links] \label{def:mimmodel}
Under this model, we assume that the response variable $Y \in \RR$ and the covariate $X\in \RR^d$ are linked via
\#\label{eq:mimmodel}
Y = f \left( \langle X, \beta^*_1\rangle, \ldots, \langle X, \beta^*_k\rangle\right) + \epsilon,
\#
where $f\colon \RR^k  \rightarrow \RR$ is an unknown function, $\{\beta^*_\ell\}_{\ell \in [k] }\subseteq
\RR^d$ are the parameters of interest, and $\epsilon \in \RR$ is the exogenous random noise such that $\EE(\epsilon) =0$. In addition, we assume that the entries of  $X $ are i.i.d. random variables with density $p_0$ and that $\{\beta^*_\ell\}_{\ell \in [k] }$ span a $k$-dimensional subspace of $\RR^d$. Moreover, we denote  $B ^* = (\beta^*_1 \ldots \beta^*_k) \in \RR^{d\times k} $. Then the model in \eqref{eq:mimmodel} can be written as $Y = f( X B^* ) + \epsilon$.  By the QR-factorization, we can write $B^*$  as $Q^* R^*$, where $Q^* \in \RR^{d\times k}$ is an orthonormal matrix and $R^* \in \RR^{k \times k }$ is invertible. Since $f$ is unknown, $R^* $ can be absorbed into the link function. Thus, we assume that $B^*$ is orthonormal for identifiability. Furthermore, we further assume that $B^*$ is $s^*$-row sparse, that is, $B^*$ contains only $s^*$ nonzero rows. We note that such a definition of sparsity for $B^*$ does not depends on the choice of coordinate system. Finally,  we assume $f$ and $X$ are such that $ \lambda_{\min} \left(\EE[ \nabla ^2 f(  XB^*  )] \right)>  0$.
\end{definition}

The assumption $\EE[ \nabla ^2 f( X B^* )] $ is positive definite, in MIM, is a multivariate generalization of the condition that $ \EE[ f''( \la X, \betas \ra )] >  0$ in SIM. It essentially guarantees that estimation of the projector onto the subspace spanned by the $k$ components is well-defined. We now introduce our estimators and provide theoretical results that are near-optimal in several settings. 

\section{Theoretical Results for Index Models with First-order Links}\label{sec:firstordertheory}
Recall that in the single index models introduced in \S \ref{sec:firstorder}, $X$ in \eqref{eq:simmodel} has i.i.d. entries with density $p_0$.  To unify the vector, matrix and tensor settings, we identify $X$ with $\vec(X) \in  \RR^{d }$ where $d = d_1\cdot d_2 \cdot d_3 \cdot d_4$. In this case, $X$ has density  $p = p_0^{\otimes d}$ and the corresponding  score function  $S \colon \RR^{d } \rightarrow \RR  ^{d  }$ is given by
\#\label{eq:def_score}
S (x) = - \nabla \log p(x)=  - \nabla p (x) / p  (x)  =  s_0\circ(x ),
\#  
where  the univariate function $s_0 =- p_0'  /  p_{0} $ is applied to each entry of $x$.
Thus   $S(X) $ has i.i.d. entries. In addition, by Proposition \ref{prop:1stein},  we have $\EE [S(X) ] = 0$ by setting $g$ to be a constant function. Moreover, in the context of SIMs specified in \eqref{eq:simmodel}, we have 
\$
\EE [ Y \cdot   S(X)    ] &= \EE \bigl [ f( \la X , \beta^*\ra) \cdot S(X) \bigr] \\ &= \EE   [ f' ( \la X ,  \beta ^* \ra ) ] \cdot  \beta ^*, \nonumber
\$
as long as the density and the link function satisfy the conditions stated in Proposition~\ref{prop:1stein}. This implies that optimization problem 
\#\label{eq:opt_population}
\minimize _{\beta \in \RR^d }  \bigl\{ \la \beta , \beta \ra - 2 \EE [ Y \cdot   \la S(X) , \beta \ra    ]  \bigr\} 
\#
has solution $\beta =  \mu \cdot \beta ^* $, where $ \mu =\EE   [ f' ( \la X ,  \beta ^* \ra ) ] $. Hence the above program could be used to obtain the unknown $\beta^*$ as long as $\mu \neq 0$. Before we proceed to describe the sample version of the above program, we make the following brief remark. The requirement $\mu \neq 0$ rules out in particular the use of our approach for non-Gaussian phase retrieval (where $f(u) = u^2$) as in that case we have $\mu =0$ when $X$ is centered. But we emphasize that the same holds true in the Gaussian and elliptical setting as well, as noted in~\cite{plan2015generalized} and \cite{goldstein2016structured}. Their methods also fail  to recover the true $\beta^*$ when the SIM model corresponds to phase retrieval. We refer the reader to \S\ref{sec:secondordertheory} for overcoming this limitation using second-order Stein's identity.

We use a sample version of the above program as an estimator for the unknown $\beta^*$. In order to deal with the high-dimensional setting, we consider a regularized version of the above formulation. More specifically, we use the $\ell_1$-norm and nuclear norm regularization in the vector and matrix/tensor settings respectively. However, a major difficulty in the sample setting for this procedure is that $\EE[ Y \cdot S(X) ]$ and its empirical counterpart may not be close enough due to  a lack of concentration. Recall our discussion from \S\ref{sec:intro} that even if the random variable $X$ is light-tailed, its score-function $S(x)$ might be arbitrarily heavy-tailed.  Furthermore,   bounded-fourth moment assumption on the noise, $Y$ too can be heavy-tailed.  Thus the naive method of using the sample version of  \eqref{eq:opt_population} to estimate $\beta^*$ leads to sub-optimal statistical rates of convergence.

To improve concentration and obtain optimal rates of convergence, we replace $Y \cdot S(X)$ with a transformed random variable $\cT(Y, X)$, which will be defined precisely later  for the sparse and low-rank cases. In particular, $\cT(Y, X)$ is a carefully truncated version of $Y \cdot S(X)$, introduced and analyzed in \cite{catoni2012challenging, fan2016robust} for related problems, that enables us to obtain well-concentrated estimators. Thus our final estimator $\hat \beta$ is defined as the solution to  the following regularized  optimization  problem
\#\label{eq:estimator}
\minimize _{ \beta \in \RR^d} ~L(\beta)   + \lambda \cdot R(\beta),
\#
where 
\# \label{eq:define_a_loss}
L(\beta) = \la \beta , \beta \ra   - \frac{2}{n} \sum_{i=1}^n  \la \cT(Y_i, X_i) ,  \beta\bigr \ra, 
\# and $\lambda>0$ is the regularization parameter which will be specified later and $R(\cdot)$ is the $\ell_1$-norm in the vector case and the nuclear norm in the matrix/tensor case. 
We now introduce our main moment assumption for first-order SIM. This assumption is made apart from the assumptions made on the noise and the link function. Recall that each entry of the score function defined in \eqref{eq:def_score} is equal to $s_0(u) = - p_0' (u) / p_0(u)$. We first state the assumption and make a few remarks about it.

\begin{assumption}[Moment Assumptions]\label{assume:moments_sim}
 There exists an absolute constant $M >0$ such that $\EE (Y^4 )  \leq M$ and $\EE _{p_0} [s^4_0(U) ]  \leq M$, where random variable $U \in \RR$ has   density $p_0$.
\end{assumption}
Consider the assumption $\EE (Y^4 )  \leq M$. By Cauchy-Schwarz inequality we have $\EE (Y^4) \leq  4 \EE (\epsilon^4) + 4  \EE [ f^4 (\la X, \beta^* )]$. Note that we assume  $\epsilon $ to be centered, independent of $X$ and has bounded fourth moment (see \S\ref{sec:model}). If the covariate $X$ has bounded fourth moment along the direction of true parameter, since $f(\cdot)$ is continuously differentiable, $f(\la X, \beta ^*\ra)$  has bounded fourth moment as well if  $f(\cdot)$ is defined on a compact subset of $\RR$.  . Hence the condition $\EE (Y^4 )  \leq M$ is relatively easy to satisfy and significantly milder than assuming that $Y$ is bounded or has lighter tails. Furthermore, $\EE _{p_0} [s^4_0(U) ]  \leq M$ is relatively mild and satisfied by a wide class of random variables. Specifically random variables that are non-symmetric and non-Gaussian satisfy this property thereby allowing our approach to work with covariates not previously possible. We believe it is highly non-trivial to weaken this condition without losing significantly in the rates of convergence that we discuss below.


\subsection{Sparse Vector SIM}
Under the above assumptions, we first state our theorem on the sparse SIM. As discussed above,  $Y \cdot S(X)$ can by heavy-tailed and hence we apply truncation to achieve concentration. Denote the  $j$-th entry of the score function $S$   in \eqref{eq:def_score} as $S_j \colon \RR^d \rightarrow \RR$, $j \in[d]$.  We define the truncated response and score function as
\#\label{eq:thresholding}
\tilde Y    &= \sign(Y ) \cdot ( | Y| \wedge \tau),\\ S_j   (x) &= \sign [  S_j (x)  ]  \cdot \bigl [ | S_j (x) | \wedge \tau \bigr ], \nonumber
\#
where $\tau>0$ is a  predetermined threshold value. We define $\tilde Y_i$ similarly for all $Y_i$, $i\in [n]$. Then we define  the  estimator $\hat \beta$ as the solution to  the optimization problem in \eqref{eq:estimator} with $\cT(Y_i, X_i) = \tilde Y_i \cdot   \tilde S(X_i) $ and $R(\beta ) = \| \beta \|_1$. Here we apply elementwise truncation in $\cT$ to ensure the sample average of $\cT $ converges to $\EE [ Y \cdot S(X)]$ in the $\ell_{\infty}$-norm for an appropriately chosen $\tau$. Note that the $\ell_{\infty}$-norm is the dual norm of the $\ell_1$-norm. Such a convergence requirement in the dual norm is standard in the analysis of regularized $M$-estimators \citep{negahban2012unified} to achieve optimal rates. The following theorem characterizes the convergence rates of $\hat \beta$.

\begin{theorem} [Signal Recovery for Sparse Vector SIM] \label{thm:sparse} For the sparse SIM defined in \S\ref{sec:model}, we assume that $\beta ^* \in \RR^d$ has $s^*$ nonzero entries.   Under Assumption \ref{assume:moments_sim}, we let $$\tau  = 2 (M \cdot \log d / n)^{1/4}$$ in \eqref{eq:thresholding} and  set the regularization parameter  $\lambda   $ in \eqref{eq:estimator}  as $$\lambda=C \sqrt{M \cdot \log d/n},$$ where $C>0$ is an absolute constant. Then with probability at least $1- d^{-2}$,
 the $\ell_1$-regularized estimator  $\hat\beta $ defined in  \eqref{eq:estimator} satisfies
\$
\| \hat \beta - \mu \beta^* \|_2 \leq  \sqrt{s^*} \cdot \lambda, ~~\| \hat \beta  - \mu \beta^* \|_1\leq  4 s^*\cdot \lambda.
\$
\end{theorem}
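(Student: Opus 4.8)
The plan is to cast $\hat\beta$ as a regularized $M$-estimator with a quadratic loss and invoke the standard deterministic analysis (in the style of \citet{negahban2012unified}), then supply the one probabilistic ingredient that the analysis needs. Observe that $L(\beta) = \la\beta,\beta\ra - \tfrac{2}{n}\sum_i \la\cT(Y_i,X_i),\beta\ra$ is strongly convex with curvature exactly $2$ in \emph{every} direction, so there is no restricted-eigenvalue condition to verify: the loss already has identity Hessian. Writing $\hat v = \tfrac1n\sum_i \cT(Y_i,X_i) = \tfrac1n\sum_i \tilde Y_i\tilde S(X_i)$, the minimizer of $L$ alone is $\hat v$, and $\mu\beta^*$ would be the minimizer if we replaced $\hat v$ by $\EE[Y\cdot S(X)] = \mu\beta^*$ (using the first-order Stein identity, Proposition~\ref{prop:1stein}, together with $\EE[f'(\la X,\beta^*\ra)] = \mu$). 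So the whole problem reduces to controlling the ``empirical process'' error $\|\hat v - \mu\beta^*\|_\infty$, which is the dual ($\ell_\infty$) norm of the $\ell_1$ regularizer.

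Concretely, the first step is the deterministic cone argument: on the event $\{\lambda \ge 2\|\hat v - \mu\beta^*\|_\infty\}$, optimality of $\hat\beta$ plus convexity of $\|\cdot\|_1$ forces the error $\Delta = \hat\beta - \mu\beta^*$ into the cone $\|\Delta_{S^c}\|_1 \le 3\|\Delta_S\|_1$ where $S = \supp(\beta^*)$, hence $\|\Delta\|_1 \le 4\sqrt{s^*}\|\Delta\|_2$. Combining this with the basic inequality $L(\hat\beta) + \lambda\|\hat\beta\|_1 \le L(\mu\beta^*) + \lambda\|\mu\beta^*\|_1$ and the exact curvature identity $L(\hat\beta) - L(\mu\beta^*) = \|\Delta\|_2^2 - 2\la\hat v - \mu\beta^*,\Delta\ra$ gives $\|\Delta\|_2^2 \le \tfrac32\lambda\|\Delta\|_1 \le 6\sqrt{s^*}\,\lambda\|\Delta\|_2$, so $\|\Delta\|_2 \lesssim \sqrt{s^*}\lambda$ and $\|\Delta\|_1 \lesssim s^*\lambda$; one then tracks the constants to land at exactly $\sqrt{s^*}\lambda$ and $4s^*\lambda$ as stated (a careful bookkeeping of the factor-of-$2$ in the cone and the $3/2$ vs $2$ in the basic inequality is what pins the constant to $4$).

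The second step — and the main obstacle — is the concentration bound: show that with probability at least $1-d^{-2}$,
\#\label{eq:conc_goal}
\bigl\| \tfrac1n\textstyle\sum_{i=1}^n \tilde Y_i \tilde S(X_i) - \mu\beta^* \bigr\|_\infty \le \tfrac{\lambda}{2} = \tfrac{C}{2}\sqrt{M\log d/n}.
\#
This is an entrywise statement, so fix a coordinate $j$ and control $\tfrac1n\sum_i \tilde Y_i \tilde S_j(X_i) - \EE[Y S_j(X)]$, then union bound over $j\in[d]$ (the $d^{-2}$ target means each coordinate needs failure probability $\lesssim d^{-3}$, i.e. deviations of order $\sqrt{\log d/n}$). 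The truncation at level $\tau = 2(M\log d/n)^{1/4}$ is the Catoni/Fan-style device: the bias $|\EE[\tilde Y_j\tilde S_j(X)] - \EE[Y S_j(X)]|$ is bounded using the bounded-fourth-moment Assumption~\ref{assume:moments} and Cauchy--Schwarz — the truncation only discards mass where $|Y|>\tau$ or $|S_j(X)|>\tau$, and $\EE[(Y S_j(X))^2]\le (\EE Y^4)^{1/2}(\EE S_j^4)^{1/2}\le M$ controls the tail — giving a bias of order $M/\tau^2 \asymp \sqrt{M\log d/n}$; meanwhile the truncated summands are bounded by $\tau^2 \asymp \sqrt{M\log d/n}$ and have variance $\le \EE[(Y S_j(X))^2]\le M$, so Bernstein's inequality gives the stochastic fluctuation of the right order. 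Balancing bias ($\propto \tau^{-2}$) against the Bernstein range term ($\propto \tau^2/n$) is exactly what forces the choice $\tau\asymp (M\log d/n)^{1/4}$. I expect the delicate part to be threading the constants so that bias plus fluctuation is $\le \lambda/2$ for the stated $C$, and verifying that Assumption~\ref{assume:moments} plus independence of $\epsilon$ and $X$ (so that $\EE[Y^2 S_j^2] \le M$ via Cauchy--Schwarz) is genuinely all that is needed — no further tail assumption — which is the whole point of the truncation construction.
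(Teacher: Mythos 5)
Your proposal is correct and follows essentially the same route as the paper: a cone argument exploiting the exact identity-Hessian curvature of $L$, reduced to an $\ell_\infty$ (dual-norm) bound on $\frac{1}{n}\sum_{i=1}^n \tilde Y_i \tilde S(X_i) - \mu\beta^*$, which the paper's Lemma~\ref{lemma:grad_loss1} proves exactly as you describe, via a Cauchy--Schwarz/Chebyshev bound on the truncation bias plus Bernstein's inequality and a union bound over coordinates (the paper works from the first-order optimality condition rather than the basic inequality, but this only affects constants). One small note: carrying out your own balancing of the bias ($\propto M\tau^{-2}$) against the Bernstein range term ($\propto \tau^2 \log d/n$) yields $\tau \asymp (Mn/\log d)^{1/4}$ --- the value actually used in the paper's Lemma~\ref{lemma:grad_loss1} --- rather than $(M\log d/n)^{1/4}$ as written in the theorem statement and copied into your sketch, which appears to be a typo in the paper.
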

From this theorem,  the  $\ell_1$- and $\ell_2$-convergence rates of $\hat \beta$ are $ \| \hat \beta - \mu \beta^* \|_1 = \cO(s^* \sqrt{\log d/n})$ and  $ \| \hat \beta - \mu \beta^* \|_2 = \cO( \sqrt{s^* \log d/n})$, respectively. These rates match  the convergence rates of sparse generalized linear models \citep{loh2015regularized} and sparse single index models with Gaussian and symmetric elliptical covariates \citep{plan2015generalized, goldstein2016structured} which are known to be minimax-optimal for this problem via matching lower bounds.

\subsection{Low-rank Matrix SIM}  \label{sec:lowrank}

We next state our theorem for the low-rank Matrix SIM. In this case, we apply the nuclear norm regularization to promote low-rankness. Note that by definition, $\cT$ is matrix-valued. Since the dual norm of the nuclear norm is the operator norm, we need the sample average of $\cT$ to converge to $\EE [ Y \cdot S(X) ]$ in the operator norm rapidly to achieve optimal rates of convergence. To achieve such a goal, we leverage the truncation argument from \cite{catoni2012challenging, minsker2016sub, fan2016robust} to construct  $\cT(Y, X)$.

Let $\phi \colon  \RR \rightarrow \RR $ be a non-decreasing function such that
$$
-\log(1  - x + x^2 /2 )\leq \phi(x) \leq  \log(1 + x + x^2 /2), ~~\forall x \in \RR.
$$
Based on $\phi$, we define a linear mapping $\psi \colon \RR^{d_1 \times d_2}  \rightarrow \RR^{d_1 \times d_2}$ as follows.  For any $A \in \RR^{d_1 \times d_2}$, let  \$  \tilde A = \begin{bmatrix} 0 & A \\
A^{\top} & 0
\end{bmatrix}   \$  and let $\Upsilon \Lambda \Upsilon ^\top$ be the eigenvalue composition of $\tilde A$.  In addition, let $B =  \Upsilon \bigl [  \psi\circ(\Lambda) \bigr ] \Upsilon ^\top$, where $\psi$ is applied elementwisely on $\Lambda$. Then  we  write $B$ in block from as
\$
B = \begin{bmatrix} B_{11} & B_{12} \\
B_{21} & B_{22}
\end{bmatrix}
\$
and define $   \psi(A) = B_{12}$.  Finally, we define
$\cT(Y, X) = 1/ \kappa \cdot \psi\bigl [ \kappa \cdot  Y \cdot     S(X ) \bigr ],  $ where $\kappa >0$   will be specified later.  Therefore, our final estimator  $\hat \beta  \in \RR^{d_1 \times d_2}$ is defined as the solution to the optimization problem in \eqref{eq:estimator}  with $R (\beta) = \| \beta \|_{\nuc} $.  We note  here the minimization in    \eqref{eq:estimator} is taken over $\RR^{d_1 \times d_2}$. The following theorem quantifies the convergence rates of the proposed estimator.

\begin{theorem} [Signal Recovery for Low-rank Matrix SIM] \label{thm:lowrank} For the low-rank single index model  defined in \S\ref{sec:model}, we assume that $\rank (\beta ^* ) = r^*$.    Under Assumption \ref{assume:moments_sim}, we let  $$\kappa = 2  \sqrt{n \cdot \log (d_1 + d_2)}  /\sqrt{  (d_1 + d_2 )M}$$ in $\cT(Y, X)$ and set   $\lambda   $ in \eqref{eq:estimator}  as $$\lambda = C \sqrt{  M\cdot ( d_1+d_2)\cdot \log (d_1+ d_2)/n},$$ where $C>0$ is an absolute constant. Then with probability at least $1- (d_1+d_2)^{-2}$,
 the  nuclear norm regularized estimator  $\hat\beta $  satisfies
\$
\| \hat \beta  - \mu \beta^* \|_{\fro} \leq  3 \sqrt{r^*} \cdot \lambda, ~~\| \hat \beta  - \mu \beta^* \|_{\nuc}\leq  12 r^*\cdot \lambda.
\$
\end{theorem}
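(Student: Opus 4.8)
The estimator in \eqref{eq:estimator} with $R=\|\cdot\|_{\nuc}$ is a regularized $M$-estimator whose loss $L(\beta)=\langle\beta,\beta\rangle-\tfrac2n\sum_{i=1}^n\langle\cT(Y_i,X_i),\beta\rangle$ is \emph{exactly quadratic}, so the plan is to run the Negahban--Wainwright--Yu program \citep{negahban2012unified}, specialized to a quadratic loss. Two observations make the deterministic half short. First, $\nabla^2 L\equiv 2\,\mathrm{Id}$ on $\RR^{d_1\times d_2}$, so restricted strong convexity holds \emph{globally} with zero tolerance term: $L(\mu\betas+\Delta)-L(\mu\betas)-\langle\nabla L(\mu\betas),\Delta\rangle=\|\Delta\|_{\fro}^2$ for every $\Delta$. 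Second, the population minimizer of $\beta\mapsto\langle\beta,\beta\rangle-2\langle\EE[Y\cdot S(X)],\beta\rangle$ is $\mu\betas$, where $\mu=\EE[f'(\langle X,\betas\rangle)]$; this is precisely the first-order Stein identity (Proposition~\ref{prop:1stein}) applied to $g(x)=\langle x,\betas\rangle$, using that $\epsilon$ is centered and independent of $X$. Write $\widehat G=\tfrac1n\sum_{i=1}^n\cT(Y_i,X_i)$ and $\widehat\Delta=\hat\beta-\mu\betas$, so that $\nabla L(\mu\betas)=2(\mu\betas-\widehat G)$.

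\textbf{Step 1 (deterministic error bound).} Let $(\mathcal M,\overline{\mathcal M})$ be the rank-$r^*$ subspace pair attached to the SVD of $\mu\betas$, over which the nuclear norm is decomposable with subspace-compatibility constant $\sqrt{2r^*}$. On the event $\mathcal E=\{\,4\|\widehat G-\mu\betas\|_{\oper}\le\lambda\,\}$, the standard chain --- basic inequality $L(\hat\beta)+\lambda\|\hat\beta\|_{\nuc}\le L(\mu\betas)+\lambda\|\mu\betas\|_{\nuc}$, the RSC identity above, H\"older's inequality $\langle\mu\betas-\widehat G,\widehat\Delta\rangle\le\|\widehat G-\mu\betas\|_{\oper}\|\widehat\Delta\|_{\nuc}$, and decomposability --- first confines $\widehat\Delta$ to the cone $\{\,\|\widehat\Delta_{\overline{\mathcal M}^{\perp}}\|_{\nuc}\le 3\|\widehat\Delta_{\overline{\mathcal M}}\|_{\nuc}\,\}$, hence $\|\widehat\Delta\|_{\nuc}\le 4\sqrt{2r^*}\,\|\widehat\Delta\|_{\fro}$, and then yields $\|\widehat\Delta\|_{\fro}^2\le\tfrac32\lambda\sqrt{2r^*}\,\|\widehat\Delta\|_{\fro}$. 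Solving the resulting quadratic inequality and tracking constants gives $\|\widehat\Delta\|_{\fro}\le 3\sqrt{r^*}\,\lambda$ and $\|\widehat\Delta\|_{\nuc}\le 12\,r^*\lambda$. This half is structurally identical to the sparse case (Theorem~\ref{thm:sparse}), with the $(\ell_\infty,\ell_1)$ dual pairing replaced by the $(\oper,\nuc)$ pairing.

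\textbf{Step 2 (stochastic control of $\widehat G$).} It remains to show $\P(\mathcal E)\ge 1-(d_1+d_2)^{-2}$, i.e.\ $\|\widehat G-\EE[Y\cdot S(X)]\|_{\oper}\le\lambda/4$ with that probability. This is where the robust map $\psi$ is essential: $\cT(Y,X)=\kappa^{-1}\psi[\kappa\,Y\,S(X)]$ is the matrix analogue of Catoni's influence function acting through the symmetric dilation, and $\widehat G=\tfrac1{n\kappa}\sum_{i=1}^n\psi(\kappa\,Y_iS(X_i))$ is exactly the matrix-mean estimator analyzed in \citep{minsker2016sub} (building on \citep{catoni2012challenging}). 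Its operator-norm deviation bound states that, with $\kappa$ tuned as in the theorem to balance the saturation bias of $\psi$ against the stochastic fluctuation, $\|\widehat G-\EE[Y\cdot S(X)]\|_{\oper}=\cO\!\left(\sqrt{v\,\log(d_1+d_2)/n}\right)$ with probability at least $1-(d_1+d_2)^{-2}$, where $v$ upper-bounds $\max\{\|\EE[(YS(X))(YS(X))^{\top}]\|_{\oper},\,\|\EE[(YS(X))^{\top}(YS(X))]\|_{\oper}\}$. Taking the absolute constant $C$ in $\lambda$ large enough that $C\sqrt{v\,\log(d_1+d_2)/n}$ falls below $\lambda/4$ then closes the argument, \emph{provided} one can show $v=\cO((d_1+d_2)M)$.

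\textbf{The main obstacle.} Establishing $v=\cO((d_1+d_2)M)$ under Assumption~\ref{assume:moments} is the substantive step. The diagonal entries $[\EE(YS(X))(YS(X))^{\top}]_{jj}=\sum_{k=1}^{d_2}\EE[Y^2 s_0(X_{jk})^2]\le d_2M$ are immediate from Cauchy--Schwarz. The danger is the off-diagonal block $A$: the crude estimate only gives $\|A\|_{\oper}\le\|A\|_{\fro}=\cO(\sqrt{d_1 d_2\,M})$, which would destroy the rate. The fix is to use that the entries of $S(X)$ are independent with $\EE_{p_0}[s_0(U)]=0$ and that $Y$ depends on $X$ only through the single linear functional $\langle X,\betas\rangle$: applying the first-order Stein identity to the cross-moments collapses $\sum_k\EE[Y^2 s_0(X_{jk})s_0(X_{\ell k})]$ for $j\ne\ell$ to a fixed scalar multiple of $[\betas(\betas)^{\top}]_{j\ell}$, so that $A$ equals $\cO(M)\cdot\betas(\betas)^{\top}$ up to lower-order terms, with $\|\betas(\betas)^{\top}\|_{\oper}\le\|\betas\|_{\fro}^2=1$. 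Hence $\|\EE(YS(X))(YS(X))^{\top}\|_{\oper}=\cO(d_2M)$ and, symmetrically, $\|\EE(YS(X))^{\top}(YS(X))\|_{\oper}=\cO(d_1M)$, giving $v=\cO((d_1+d_2)M)$ as needed. Making this Stein computation rigorous --- in particular bounding the remainder terms under only the stated moment and regularity hypotheses on $f$ --- is the delicate part; the rest is bookkeeping.
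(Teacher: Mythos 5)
Your Step 1 is exactly the paper's deterministic argument: the basic inequality from optimality of $\hat\beta$, the identity $L(\hat\beta)-L(\mu\beta^*)=\langle\nabla L(\mu\beta^*),\Theta\rangle+c\|\Theta\|_{\fro}^2$ from the quadratic loss, H\"older with the $(\|\cdot\|_{\oper},\|\cdot\|_{\nuc})$ duality, and decomposability of the nuclear norm over the SVD subspaces of $\mu\beta^*$ (the paper carries this out explicitly via the block decomposition $\Gamma=\Gamma^{(1)}+\Gamma^{(2)}$ with $\rank(\Gamma^{(2)})\le 2r^*$), yielding the same constants. Your Step 2 also matches the paper's Lemma~\ref{lem:grad_loss2} in identifying the $\psi$-truncated matrix mean and Minsker's operator-norm Bernstein inequality as the tool, and in recognizing that everything reduces to showing the matrix variance parameter satisfies $v=\cO((d_1+d_2)M)$.

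The gap is in what you call ``the main obstacle,'' which is where you leave the argument incomplete and where your sketched route would not close under Assumption~\ref{assume:moments}. There is no need to control the off-diagonal entries of $A=\EE[Y^2S(X)S(X)^{\top}]$ individually, and in particular no need for a Stein-identity computation on the cross-moments $\EE[Y^2 s_0(X_{jk})s_0(X_{\ell k})]$. The paper bounds the quadratic form directly: for any $u\in\cS^{d_1-1}$,
\begin{align*}
u^{\top}\EE[Y^2S(X)S(X)^{\top}]u=\sum_{k=1}^{d_2}\EE\bigl[Y^2\bigl(S_{\cdot,k}(X)^{\top}u\bigr)^2\bigr]\le\sum_{k=1}^{d_2}\sqrt{\EE(Y^4)}\cdot\sqrt{\EE\bigl[\bigl(S_{\cdot,k}(X)^{\top}u\bigr)^4\bigr]},
\end{align*}
and since $S_{\cdot,k}(X)$ has i.i.d.\ centered entries with fourth moment at most $M$, the linear form satisfies $\EE[(S_{\cdot,k}(X)^{\top}u)^4]\lesssim M\|u\|_2^4=M$ by expanding and pairing indices. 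This gives $v\lesssim(d_1+d_2)M$ with no analysis of the correlation between $Y$ and $S(X)$ at all: the Cauchy--Schwarz step decouples them. By contrast, your proposed fix applies (second-order) Stein's identity to $g(x)=f^2(\langle x,\beta^*\rangle)$, which requires $f^2$ to be twice differentiable with $\EE[(f^2)''(\langle X,\beta^*\rangle)]$ existing and bounded by $\cO(M)$ — none of which is implied by the fourth-moment conditions in Assumption~\ref{assume:moments}. So the one step you defer as ``delicate'' is in fact the crux, your route to it imports unstated hypotheses, and the correct resolution is the elementary quadratic-form bound above.
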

By this theorem, we have $\| \hat \beta - \mu \beta ^* \|_{\fro} = \cO( \sqrt{r^* (d_1+ d_2) \cdot\log (d_1+ d_2)/ n} ) $ and $\| \hat \beta - \mu \beta ^* \|_{\nuc} = \cO( r^*\cdot \sqrt{ (d_1+ d_2)\cdot \log (d_1+ d_2)/ n} ) $. Note that the rate obtained is minimax-optimal up to a logarithmic factor. Furthermore, it matches the rates for low-rank single index models with Gaussian and symmetric elliptical distributions  up to a logarithmic factor~\cite{ plan2015generalized, goldstein2016structured}.

\subsection{Low-rank Tensor SIM}  \label{sec:lowrankten}
We now state our result for low-rank tensor SIM. The notion of rank of a tensor is more delicate compared to that of a matrix. Several generalizations of the matrix rank exist for the case of tensors. Recall from Definition~\ref{def:tensorcpranksim}, that we assumed that the structure on $\beta^*$ is that it has low CP-rank.  Unfortunately, enforcing such a constraint via a direct tensor nuclear norm relaxation (similar to that of the matrix nuclear norm) is NP-hard~\citep{friedland2014nuclear}. 

One way to overcome such a computational hurdle is to deal with tensors via appropriately matricized forms. In order to enable computable estimators, we specifically leverage the results of~\cite{mu2014square} and define the following square-unfolding of a tensor. Denote by $ \mat \colon \RR^{d ^ \otimes 4  }  \rightarrow \RR^{d^2 \times d^2 }$ the operation of tensor square-unfolding, which maps a fourth-order tensor to a square matrix. More specifically, the entries of $\mat(Z)$ are specified by
$ [ \mat (Z)]_{k_1, k_2} = Z ( j_1, j_2, j_3, j_4),
$
where the indices satisfy the relationship $k_1 = 1 + (j_1 -1) + (j_2-1)\cdot d$ and $k_2 = 1+ (j_3-1) + (j_4-1) \cdot d$. Intuitively, the matrix obtained by the square-unfolding operation is as square as possible, i.e., it is $d^2 \times d^2$ rather than the rectangular $d \times d^3$ or $d^3 \times d$ matrices. It is shown in~\cite{mu2014square} such a square matricization preserves the low CP-rank of the original tensor. Hence one could use the matrix nuclear norm relaxation on the square-unfolded tensor. Furthermore, for the case of tensor SIM as in Definition~\ref{def:tensorcpranksim}, note that we have $  \la X,  \betas \ra = \la\mat(X), \mat(\betas)\ra$. Combining the above observations, the low CP-rank tensor SIM problem could be reduced to that of low-rank matrix SIM problem, where matrix low-rank constraint, via nuclear norm, is enforced on $\mat(\betas)$. Thus, we use the estimator in~\eqref{eq:estimator} with $R(\beta) = \| \mat(\beta)\|_*$ and $\mat(X_i)$ for all $i=1, \ldots, n$ along with the truncation operation $\mathcal{T}$ described in \S\ref{sec:lowrank}. We now have the following theorem for the low-rank tensor SIM.

\begin{theorem} [Signal Recovery for Low-rank Tensor SIM] \label{thm:lowranktensor} For the low-rank tensor single index model  defined in \S\ref{sec:model}, Definition~\ref{def:tensorcpranksim}, we assume that $\rank_{CP} (\beta ^* ) = r^*$.    Under Assumption \ref{assume:moments_sim}, we let  $$\kappa = 2  \sqrt{2 n \cdot \log d}  /\sqrt{  (2d^2)M}$$ in $\cT(Y, X)$ and set   $\lambda$ in \eqref{eq:estimator} as $$\lambda=C \sqrt{ 2 M\cdot ( 2d^2) \cdot \log d /n},$$ where $C>0$ is an absolute constant. Then with probability at least $1- (2d)^{-2}$,
 the  nuclear-norm regularized estimator  $\hat\beta $  satisfies
\$
\| \hat \beta  - \mu \mat(\beta^*) \|_{\fro} \leq  3 \sqrt{r^*} \cdot \lambda
\$
\end{theorem}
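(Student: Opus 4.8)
The plan is to deduce Theorem~\ref{thm:lowranktensor} from the low-rank matrix result, Theorem~\ref{thm:lowrank}, by passing to the square-unfolding. The starting point is the identity already recorded above, $\la X, \betas\ra = \la \mat(X), \mat(\betas)\ra$, where $\mat\colon \RR^{d^{\otimes 4}} \to \RR^{d^2\times d^2}$ is the square-unfolding of~\cite{mu2014square}. Under this identity the tensor SIM $Y = f(\la X, \betas\ra) + \epsilon$ becomes, verbatim, a matrix SIM with covariate $\mat(X) \in \RR^{d^2\times d^2}$ and parameter $\mat(\betas)$. First I would check that every hypothesis of the matrix model survives this reparametrization. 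Since $\mat$ is nothing but a fixed permutation of the $d^4$ coordinates of $X$, the matrix $\mat(X)$ again has i.i.d.\ entries with density $p_0$, its (matrix-valued) score function is again $s_0$ applied entrywise (so that $Y \cdot S(\mat(X)) = \mat(Y\cdot S(X))$), and $\| \mat(\betas)\|_{\fro} = \| \betas\|_{\fro} = 1$; also $\mu = \EE[f'(\la X,\betas\ra)]$ and hence the requirement $\mu \neq 0$ are unaffected, and Assumption~\ref{assume:moments} is literally the same statement. Consequently the estimator defined just before the theorem --- problem~\eqref{eq:estimator} with the truncation $\cT$ of \S\ref{sec:lowrank}, the data $\{(\mat(X_i), Y_i)\}_{i=1}^n$, and $R(\cdot) = \|\cdot\|_{\nuc}$ on $\RR^{d^2\times d^2}$ --- is exactly the matrix-SIM estimator applied to the unfolded data, and since $\mat$ is a Frobenius isometry one has $\| \hat\beta - \mu\,\mat(\betas)\|_{\fro} = \| \mat(\hat\beta) - \mu\,\mat(\betas)\|_{\fro}$.

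Next I would record the rank bound. Writing $\betas = \sum_{j=1}^{r^*} u_j \otimes v_j \otimes s_j \otimes t_j$, the definition of $\mat$ gives $\mat(\betas) = \sum_{j=1}^{r^*} (u_j\otimes v_j)(s_j\otimes t_j)^\top$, a sum of $r^*$ rank-one $d^2\times d^2$ matrices, so $\rank(\mat(\betas)) \leq \rank_{CP}(\betas) = r^*$; this is precisely the rank-non-increase property of the square-unfolding established in~\cite{mu2014square}. Now I would invoke Theorem~\ref{thm:lowrank} with $d_1 = d_2 = d^2$, so that $d_1 + d_2 = 2 d^2$. With this substitution the tuning parameters prescribed in the present statement, $\kappa = 2\sqrt{2 n \log d}\big/\sqrt{(2 d^2) M}$ and $\lambda = C \sqrt{2 M (2 d^2)\log d / n}$, coincide --- up to the absolute constants, using $\log(2 d^2) \asymp \log d$ --- with the values $2\sqrt{n \log(d_1+d_2)}\big/\sqrt{(d_1+d_2) M}$ and $C\sqrt{M(d_1+d_2)\log(d_1+d_2)/n}$ required by Theorem~\ref{thm:lowrank}. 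That theorem then gives, on its event of probability at least $1 - (2 d^2)^{-2}$, that $\| \mat(\hat\beta) - \mu\,\mat(\betas)\|_{\fro} \leq 3\sqrt{\rank(\mat(\betas))}\cdot\lambda \leq 3\sqrt{r^*}\cdot\lambda$, which is the claim.

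There is no genuinely new analysis to perform here: the content is entirely in Theorem~\ref{thm:lowrank} and in the elementary fact that square-unfolding does not increase rank. The only place I would be careful is the verification that the coordinate rearrangement performed by $\mat$ preserves exactly the probabilistic structure exploited in the matrix proof --- namely the i.i.d.\ entrywise form of both $\mat(X)$ and its score function, so that the operator-norm concentration of the truncated statistic $\cT$ proved for general $d_1\times d_2$ matrices applies here with $d_1 = d_2 = d^2$ --- together with the bookkeeping of matching the constants in $\kappa$ and $\lambda$ and the $\log(2d^2)$ versus $\log d$ factor. I expect this purely administrative matching to be the only obstacle.
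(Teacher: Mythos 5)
Your reduction is exactly the route the paper takes: the authors omit the proof of Theorem~\ref{thm:lowranktensor}, stating that it ``follows the exact steps of Theorem~\ref{thm:lowrank},'' i.e.\ one applies the matrix result to the square-unfolded data with $d_1=d_2=d^2$, using that $\mat$ is a coordinate permutation preserving the i.i.d.\ entrywise structure and that $\rank(\mat(\betas))\leq \rank_{CP}(\betas)=r^*$ by \cite{mu2014square}. Your verification of the tuning-parameter matching and of the probability bound $1-(2d^2)^{-2}$ (the ``$1-2d$'' in the statement is evidently a typo) is correct and complete.
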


We omit the proof of the above theorem as it is follows the exact steps of Theorem~\ref{thm:lowrank} proved in Appendix~\ref{app:lr}. From the above theorem, we see that as long as $n = \Omega (r^* d^2)$, we achieve consistent estimation of $\beta^*$ up to scaling. This improves upon recent results established in~\cite{chen2016non}, that established similar results under restrictive Gaussian covariate assumption and required knowledge of the link functions (i.e., generalized linear models). Furthermore our results significantly generalizes the results of~\cite{mu2014square} that considered only linear link functions.  Finally, although our structure on $\beta^*$ was a low CP-rank structure, the square matricization technique also applies for the case of low Tucker-rank, which is yet another notion of rank for tensors with several applications. It is straightforward to extend our results to this case of low Tucker-rank.

\section{Theoretical Results for Index Models with Second-Order Links}\label{sec:secondordertheory}

We now introduce our estimators and establish their statistical rates of convergence for the case of index models with second-order link functions. Discussions on  optimality of the established rates and connection to sparse PCA problem  is deferred to \S\ref{sec:optspca}. Similar to the first-order case, we focus on the case where $X$ has i.i.d. entries with density $p_0  \colon \RR \rightarrow \RR$. Thus the joint density of $X$ is $p(x) = p_0^{\otimes d} (x) = \prod_{j=1}^d p_0(x_j)$.  We define a univariate  function $s_0 \colon \RR\rightarrow \RR$ by $s_0(u) =  -p_0' (u) / p_0(u)$. Then the first-order score function associated with $p$ is given by $S  (x) = s_0 \circ (x)$. Equivalently,  the $j$-th entry of the first-order score function associated with $p$ is given by  $[ S(x) ]_{j} = s_0(x_j)$. Moreover, the second order score function is
\#\label{eq:second_score}
T (x) & =  S (x)  S^\top(x) - \nabla S(x) \\ &=  S (x)  S^\top(x) - \diag[s_0'\circ (x)]. \nonumber
\#
Before we present our estimator, we introduce  the assumption  on $Y$ and $s_0(\cdot)$. 

\begin{assumption}[Moment Assumptions]\label{assume:moments}
 We  assume that there exists a constant $M$ such that $\EE_{p_0} [ s_0^6(U)] \leq M$ and $ \EE (Y^6) \leq M$. We denote $\sigma_0^2 = \EE_{p_0} [ s_0^2 (U)] = \Var_{p_0} [ s_0(U)]$.
\end{assumption}

The assumption that $\EE_{p_0} [ s_0^6(U)] \leq M$ allows wide family of distributions of   including Gaussian and more heavy-tailed random variables. Furthermore, we do not  require the covariate  to be  elliptically symmetric as is commonly seen in prior work, which enables our method to be  applicable for skewed covariates. As for the assumption that   $ \EE (Y^6) \leq M$, in the case of SIMs,  we have $ \EE (Y^6) \leq C\left (\EE (\epsilon^6) +\EE [f ^6(\la X,  \betas \ra ) ] \right)$. Thus this assumption is satisfied  as long as both $\epsilon$  and $f( \la X , \betas \ra ) $ have bounded sixth moments.   This is a significantly milder assumption which allows for heavy-tailed response as opposed to bounded or light-tailed response.   
\subsection{Sparse Vector SIM}
Now we are ready to describe our estimator for the sparse SIMs in Definition \ref{def:simmodel}. 
Note that by Proposition \ref{prop:2stein} we have
\#\label{eq:apply2stein}
\EE \bigl [ Y \cdot T (X)\bigr  ] =  C_0 \cdot \betas {\betas}^\top,
\#
where $C_0  = 2 \EE[ f''( \la X, \betas \ra )] >0 $ as per Definition \ref{def:simmodel}. Therefore, one way to estimator  $\betas$ is to obtain  the leading eigenvector of $\EE [ Y \cdot T (X) ]$ from the samples. Since $\betas$ is sparse, we formulate our estimator as a sparsity constrained semi-definite program:
 \#\label{eq:sparsePCApop}
&\textrm{maximize}~~ \la W, \Sigma^*\ra - \lambda \| W \|_{1 } \notag \\
&\text{subject to}~~ 0 \preceq W \preceq I_d, ~~\trace (W ) = 1.
\#
where $\Sigma^* = \EE [ Y \cdot T (X)   ]$. Note that both the score $T (X)$ and the response variable $Y$ can be heavy-tailed. In order to obtain near-optimal estimates in the sample setting, we apply truncation to handle the heavy-tails. Specifically, for a positive parameter   $\tau \in \RR $,  we define the truncated random variables  by
\# \label{eq:truncation}
\tilde Y_i &= \sign (Y_i) \cdot \min \{ | Y_i| , \tau   \} \\
 \tilde T _{jk}(X_i)  & = \sign  \{  T_{jk}  (X_i)  \} \cdot \min \bigl \{  | T _{jk} (X_i)  |  , \tau ^2 \bigr \}. \nonumber
\#
Then we define an robust estimator of $\Sigma^*$ as
\#\label{eq:estimate_cov}
\tilde \Sigma = \frac{1}{n} \sum _{i=1}^n \tilde Y_i \cdot \tilde T (X_i) .
\#
Given $\tilde \Sigma$, let $\hat W$ be the solution of
the following convex optimization problem  
\#\label{eq:sparsePCA1}
&\textrm{maximize}~~ \la W, \tilde \Sigma \ra  -  \lambda \| W \|_{1 } \notag \\
&\text{subject to}~~ 0 \preceq W \preceq I_d, ~~\trace (W ) = 1.
\#
Here $\lambda$ is a regularization parameter to be specified later. The  final estimator is defined as the leading eigenvector of $\hat W$. The following theorem quantifies the statistical rates of convergence of the proposed estimator.

\begin{theorem}[Signal Recovery for Sparse SIM]\label{thm:pr1}
Let $\hat W $ be the solution of the optimization in \eqref{eq:sparsePCA1} and let $\hat \beta  $ be the leading eigenvector of $\hat W$. We set the regularization parameter $\lambda$ in \eqref{eq:sparsePCA1} as $\lambda = 10  \sqrt{M   \log d / n}$ and set $\tau = (1.5 M n /\log d)^{1/6}$ in \eqref{eq:truncation}. Under Assumption \ref{assume:moments}, we have $\min_{t \in \{+1,-1 \}}\| \hat \beta  - t \betas \|_2 \leq 4 \sqrt{2} s^* \lambda $  with probability at least $1 - d^{-2}$.
\end{theorem}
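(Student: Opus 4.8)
The plan is to follow the standard three-step template for $\ell_1$-penalized Fantope estimators, the new ingredient being a robust $\ell_\infty$ concentration bound for the truncated empirical second-order score matrix. Write $\Pi^* = \betas(\betas)^\top$. By Proposition~\ref{prop:2stein} applied to $g(x) = f(\la x, \betas\ra)$, we have $\Sigma^* = \EE[Y\cdot T(X)] = C_0\,\Pi^*$ with $C_0 = 2\,\EE[f''(\la X,\betas\ra)] > 0$, so $\Sigma^*$ has a single nonzero eigenvalue $C_0$ with eigenvector $\betas$, and the population program \eqref{eq:sparsePCApop} returns $\Pi^*$. The three steps are: (i) show $\|\tilde\Sigma - \Sigma^*\|_{\infty} \le \lambda/2$ with probability at least $1-d^{-2}$; (ii) use optimality of $\hat W$ together with the curvature of the Fantope to deduce $\|\hat W - \Pi^*\|_{\fro} = \cO(s^*\lambda/C_0)$; and (iii) pass from $\hat W$ to its leading eigenvector $\hat\beta$ via the Davis--Kahan $\sin\Theta$ theorem.

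For step (i), fix an entry $(j,k)$ and split $\tilde\Sigma_{jk} - \Sigma^*_{jk} = \bigl(\tilde\Sigma_{jk} - \EE[\tilde Y\tilde T_{jk}(X)]\bigr) + \bigl(\EE[\tilde Y\tilde T_{jk}(X)] - \EE[Y T_{jk}(X)]\bigr)$ into a mean-zero fluctuation and a truncation bias. For the fluctuation I would invoke Bernstein's inequality: the summands are bounded by $\tau\cdot\tau^2 = \tau^3$ almost surely, while their variance is at most $\EE[Y^2 T_{jk}^2(X)] \le \sqrt{\EE[Y^4]}\,\sqrt{\EE[T_{jk}^4(X)]} \le M$, the last bound using Assumption~\ref{assume:moments}, Jensen's inequality, and crucially the independence of the coordinates of $X$ (which lets $\EE[T_{jk}^4]$ factor as $\EE[s_0^4]^2$ for $j\ne k$). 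With $\tau = (1.5Mn/\log d)^{1/6}$ the sub-Gaussian and sub-exponential Bernstein terms balance, up to absolute constants, at $\sqrt{M\log d/n}$; a union bound over the $\cO(d^2)$ entries, with the choice $\lambda = 10\sqrt{M\log d/n}$ absorbing constants, controls the fluctuation on an event of probability at least $1-d^{-2}$. The bias is handled through the tail: $\EE|\tilde Y\tilde T_{jk} - Y T_{jk}| \le \EE[(|Y|-\tau)_{+}|T_{jk}|] + \EE[|\tilde Y|\,(|T_{jk}|-\tau^2)_{+}]$, and each piece is bounded by pulling powers of $|Y|/\tau$ and $|T_{jk}|/\tau^2$ inside the expectation and applying the sixth-moment bounds; combined with the fluctuation bound this yields $\|\tilde\Sigma - \Sigma^*\|_{\infty}\le\lambda/2$.

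I expect step (i) to be the real obstacle, for two reasons. First, the diagonal entries involve $T_{jj}(X) = s_0^2(X_j) - s_0'(X_j)$, so bounding their moments needs control of the moments of $s_0'$, which calls for either a mild additional regularity hypothesis on $p_0$ or exploiting the one-dimensional score identity $\EE[s_0'(U)] = \EE[s_0^2(U)] = \sigma_0^2$ together with the truncation. Second, because $Y$ and $T_{jk}$ are truncated separately rather than as the product $Y\cdot T_{jk}$, the truncation bias is somewhat larger than an optimal single-variable truncation would give, and balancing it against $\lambda$ is exactly what confines the conclusion to the ``moderate sample size'' regime mentioned in \S\ref{sec:intro}. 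Step (ii) is then deterministic. From optimality of $\hat W$ in \eqref{eq:sparsePCA1}, writing $\Delta = \hat W - \Pi^*$, one gets $\la\Delta,\tilde\Sigma\ra \ge \lambda(\|\hat W\|_1 - \|\Pi^*\|_1) \ge \lambda(\|\Delta_{\cS^c}\|_1 - \|\Delta_{\cS}\|_1)$, where $\cS$ is the $s^*\times s^*$ support of $\Pi^*$ and we split the $\ell_1$ norm on and off $\cS$. On the other hand $\la\Delta,\tilde\Sigma\ra = C_0\la\Delta,\Pi^*\ra + \la\Delta,\tilde\Sigma - \Sigma^*\ra \le -\tfrac{C_0}{2}\|\Delta\|_{\fro}^2 + \tfrac{\lambda}{2}\|\Delta\|_1$, using $\|\tilde\Sigma - \Sigma^*\|_{\infty}\le\lambda/2$ and the Fantope curvature inequality $\la\Pi^* - W, \Pi^*\ra \ge \tfrac12\|W - \Pi^*\|_{\fro}^2$, which is valid for every $W$ with $0\preceq W\preceq I_d$ and $\trace W = 1$ because $\|W\|_{\fro}^2 = \trace(W^2)\le\trace W = 1$. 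Combining the two displays, the off-support $\ell_1$ terms cancel favorably and one is left with $\tfrac{C_0}{2}\|\Delta\|_{\fro}^2 \le \tfrac{3\lambda}{2}\|\Delta_{\cS}\|_1 \le \tfrac{3\lambda}{2}\,s^*\|\Delta\|_{\fro}$, hence $\|\hat W - \Pi^*\|_{\fro} \le 3 s^*\lambda/C_0$.

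Finally, for step (iii): $\Pi^*$ is a rank-one projection with spectrum $\{1,0,\dots,0\}$ and hence eigengap $1$, so the Davis--Kahan $\sin\Theta$ theorem (in its user-friendly Yu--Wang--Samworth form) gives $\|\hat\beta\hat\beta^\top - \Pi^*\|_{\fro} \le \sqrt2\,\|\hat W - \Pi^*\|_{\oper} \le \sqrt2\,\|\hat W - \Pi^*\|_{\fro}$ once the sign of the unit eigenvector $\hat\beta$ is fixed so that $\la\hat\beta,\betas\ra \ge 0$. Since $\|\hat\beta - \betas\|_2^2 = \|\hat\beta\hat\beta^\top - \Pi^*\|_{\fro}^2/(1 + \la\hat\beta,\betas\ra) \le \|\hat\beta\hat\beta^\top - \Pi^*\|_{\fro}^2$, we conclude $\|\hat\beta - \betas\|_2 \le 3\sqrt2\, s^*\lambda/C_0$, which after tracking constants (and treating the signal strength $C_0$ as an absolute constant) is the asserted bound $\|\hat\beta - \betas\|_2 \le 4\sqrt2\,s^*\lambda$. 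Steps (ii) and (iii) are routine; the crux is the heavy-tailed concentration of step (i) --- making the truncation kill the $\ell_\infty$ bias of $\tilde\Sigma$ at the $\sqrt{\log d/n}$ rate while keeping the Bernstein range term of the same order.
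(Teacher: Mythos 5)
Your proposal follows essentially the same route as the paper's proof: an entrywise bias--variance decomposition with Bernstein's inequality and a union bound to control $\|\tilde\Sigma-\Sigma^*\|_{\infty}$ (the paper's Lemma on the truncated covariance), the Fantope curvature lemma of Vu et al.\ combined with H\"older's inequality and the support decomposition of the $\ell_1$-norm to obtain $\|\hat W - W^*\|_{\fro}\le 4 s^*\lambda/C_0$, and a $\sin\Theta$ step to pass to the leading eigenvector. Your side remarks are also on target: the paper's bound on $\EE[|T_{jj}(X)|^3]$ does tacitly require a moment bound on $s_0'$ that the stated moment assumption does not provide, and the paper's own argument ends with the factor $1/C_0$ that the theorem statement silently absorbs.
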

By this theorem, the $\ell_2$-error of the proposed estimator is $\cO(s^* \sqrt{ \log d / n})$, which 
implies that consistent estimation requires $n = \Omega( {s^*}^2 \log d)$ samples. 

\subsection{Subspace-Sparse MIM}
Now we introduce the estimator for $B^*$ of the sparse MIM in Definition \ref{def:mimmodel}. Proposition \ref{prop:2stein} implies that $\EE[ Y \cdot T(X)]  = B^* D_0 B^* $, where $D_0 = \EE [ \nabla ^2 f(X B^*)]$ is positive definite. Similar to \eqref{eq:sparsePCA1},  we recover the column space of  $ B^* $ by solving
\#\label{eq:sparsePCA2}
&\textrm{maximize}~~ \la W, \tilde \Sigma \ra  -  \lambda \| W \|_{1 }, \notag \\
&\text{subject to}~~ 0 \preceq W \preceq I_d, ~~\trace (W ) = k.
\#
where $\tilde \Sigma $ is defined in \eqref{eq:estimate_cov}, $\lambda >0 $ is a regularization parameter and $k$ is the number of indices which is assumed to be known. Let $\hat W$ be the solution of  \eqref{eq:sparsePCA2}, the final estimator is the top $k$ eigenvectors of $\hat W$. For the above estimator, we have the following theorem quantifying the statistical rate of convergence. Let $\rho_0 = \lambda_{\textsc{min}}\left(\EE [ \nabla ^2 f ( X B^*)] \right)$. 

\begin{theorem}[Signal Recovery for Sparse MIM]\label{thm:pr2}
Let $\hat W $ be the solution of the optimization problem in \eqref{eq:sparsePCA2} and let $\hat B  $ be the matrix of top-$k$ eigenvectors of $\hat W$. We set the regularization parameter in \eqref{eq:sparsePCA1} as $\lambda = 10 \sqrt{M   \log d / n}$ and let  the truncation parameter in \eqref{eq:truncation} be $\tau = (1.5 M n /\log d)^{1/6}$ . Under Assumption \ref{assume:moments}, with probability at least $1 - d^{-2}$, we have
\$
\inf_{O\in \mathbb{O}_{k} }\|\hat B  - B^* O \|_2 \leq 4 \sqrt{2} / \rho_0 \cdot s^* \lambda .
\$
\end{theorem}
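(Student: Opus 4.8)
The plan is to mirror the single-index argument of Theorem~\ref{thm:pr1} with $k$ arbitrary, viewing \eqref{eq:sparsePCA2} as a Fantope-type sparse PCA program. First I record the population structure. Applying Proposition~\ref{prop:2stein} to $g(x)=f\big((B^*)^\top x\big)$, whose Hessian equals $B^*\,\nabla^2 f\big((B^*)^\top x\big)\,(B^*)^\top$, and using $\EE(\epsilon)=0$ with $\epsilon$ independent of $X$, one obtains $\Sigma^*:=\EE[Y\cdot T(X)] = B^* D_0 B^{*\top}$ with $D_0=\EE[\nabla^2 f(XB^*)]\succeq\rho_0 I_k$. Hence $\Sigma^*$ is positive semidefinite of rank $k$, its $k$ largest eigenvalues are at least $\rho_0$ while the remaining $d-k$ vanish, and its top-$k$ eigenprojection is $\Pi^*:=B^*B^{*\top}$. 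Being a rank-$k$ orthogonal projection, $\Pi^*$ is feasible for \eqref{eq:sparsePCA2}; since $B^*$ is $s^*$-row sparse, $\Pi^*$ is supported on $T:=S\times S$ for some $S\subseteq[d]$ with $|S|=s^*$. The object to recover is the column space of $B^*$, i.e.\ the range of $\Pi^*$.

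\textbf{Step 1: deviation bound for $\tilde\Sigma$.} I would show $\|\tilde\Sigma-\Sigma^*\|_\infty\le\lambda/2$ with probability at least $1-d^{-2}$ for the stated $\lambda$ and $\tau$, where $\tilde\Sigma$ is the truncated estimator \eqref{eq:estimate_cov}. Fixing an entry $(j,\ell)$, decompose $\tilde Y_i\,\tilde T_{j\ell}(X_i)-\Sigma^*_{j\ell}$ into a bias term $\EE[\tilde Y\,\tilde T_{j\ell}(X)]-\EE[Y\,T_{j\ell}(X)]$ and a centered average. On the event that neither $Y$ nor $T_{j\ell}(X)$ is truncated the products agree, and off it one bounds $\EE\big[\,|Y\,T_{j\ell}(X)|\ ;\ |Y|>\tau\ \text{or}\ |T_{j\ell}(X)|>\tau^2\,\big]$ by Markov's inequality and $\EE(Y^6)\le M$, $\EE_{p_0}[s_0^6(U)]\le M$ (which, via Cauchy--Schwarz, also control $\EE[(Y\,T_{j\ell}(X))^2]$ by a constant multiple of $M$); the choice $\tau=(1.5Mn/\log d)^{1/6}$ makes the bias of order $\sqrt{M\log d/n}$. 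The centered average has summands bounded by $\tau^3=(1.5Mn/\log d)^{1/2}$ and variance of order $M$, so Bernstein's inequality gives a deviation of order $\sqrt{M\log d/n}$ with probability $1-d^{-c}$; a union bound over the $d^2$ entries completes the step. This is exactly the computation behind Theorem~\ref{thm:pr1}, and is unaffected by $k>1$ since the truncation in \eqref{eq:truncation} is entrywise.

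\textbf{Step 2: Fantope basic inequality, cone condition, curvature.} On the above event put $\hat\Delta=\hat W-\Pi^*$. Optimality of $\hat W$ in \eqref{eq:sparsePCA2} gives $\la\hat\Delta,\tilde\Sigma\ra\ge\lambda(\|\hat W\|_1-\|\Pi^*\|_1)$; writing $\la\hat\Delta,\tilde\Sigma\ra=\la\hat\Delta,\Sigma^*\ra+\la\hat\Delta,\tilde\Sigma-\Sigma^*\ra$ and bounding $|\la\hat\Delta,\tilde\Sigma-\Sigma^*\ra|\le\|\hat\Delta\|_1\|\tilde\Sigma-\Sigma^*\|_\infty\le(\lambda/2)\|\hat\Delta\|_1$ yields $\la\Pi^*-\hat W,\Sigma^*\ra\le\tfrac{\lambda}{2}\|\hat\Delta\|_1-\lambda(\|\hat W\|_1-\|\Pi^*\|_1)$. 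By the Ky Fan maximum principle $\Pi^*$ maximizes $\la W,\Sigma^*\ra$ over the feasible set, so the left-hand side is nonnegative; decomposing $\|\hat W\|_1$ over $T$ and $T^c$ (where $\Pi^*$ vanishes) gives the cone condition $\|\hat\Delta_{T^c}\|_1\le 3\|\hat\Delta_T\|_1$, hence $\|\hat\Delta\|_1\le 4\|\hat\Delta_T\|_1\le 4 s^*\|\hat\Delta\|_{\fro}$ (using $|T|=(s^*)^2$). Separately, the standard Fantope curvature lemma applied to $\Sigma^*$, whose $k$-th and $(k{+}1)$-th eigenvalues differ by at least $\rho_0$, gives $\la\Pi^*-\hat W,\Sigma^*\ra\ge(\rho_0/2)\|\hat\Delta\|_{\fro}^2$. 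Feeding the cone condition back into the displayed inequality bounds its right-hand side by a constant multiple of $\lambda s^*\|\hat\Delta\|_{\fro}$, so $(\rho_0/2)\|\hat\Delta\|_{\fro}^2\le C\lambda s^*\|\hat\Delta\|_{\fro}$ and therefore $\|\hat W-\Pi^*\|_{\fro}\le C' s^*\lambda/\rho_0$.

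\textbf{Step 3: passing to $\hat B$ and collecting constants.} Since $\Pi^*$ is the top-$k$ eigenprojection of a projection matrix, its eigengap at index $k$ equals $1$; with $\hat B$ the matrix of leading $k$ eigenvectors of $\hat W$, the Davis--Kahan $\sin\Theta$ theorem bounds $\|\sin\Theta(\hat B,B^*)\|_{\fro}$ by a constant times $\|\hat W-\Pi^*\|_{\fro}$, and $\inf_{O\in\mathbb{O}_k}\|\hat B-B^*O\|_2\le\sqrt2\,\|\sin\Theta(\hat B,B^*)\|_{\fro}$. Combining with Step~2 gives $\inf_{O\in\mathbb{O}_k}\|\hat B-B^*O\|_2\le C''\,s^*\lambda/\rho_0$ for an explicit constant, and a careful accounting of the constant in the $\ell_\infty$ deviation bound, the curvature constant, and the Davis--Kahan factor fixes it at $4\sqrt2$. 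Specializing $k=1$ recovers Theorem~\ref{thm:pr1}.

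I expect the crux, and the main obstacle, to be Step~1: obtaining the sharp $\ell_\infty$ rate $\sqrt{M\log d/n}$ simultaneously over all $d^2$ entries of $\tilde\Sigma$ demands a careful second-moment bound for the truncated products $\tilde Y_i\,\tilde T_{j\ell}(X_i)$ under only sixth-moment hypotheses --- the diagonal entries of $T$ carry the extra term $-\diag[s_0'\circ(x)]$ from \eqref{eq:second_score}, so one must check that their contribution is controlled as well --- together with a calibration of $\tau$ keeping the truncation bias no larger than the stochastic fluctuation. Steps~2--3 are then the by-now-routine Fantope / regularized $M$-estimator argument, differing from the $k=1$ case only in carrying the eigengap $\rho_0$ and the orthogonal alignment $O\in\mathbb{O}_k$ through.
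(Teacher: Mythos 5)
Your proposal is correct and follows essentially the same route as the paper: the entrywise truncation/Bernstein bound of Step~1 is exactly Lemma~\ref{lemma:truncation2}, Step~2 is the Fantope basic inequality combined with Lemma~\ref{lemma:curvature} and the $\ell_1$ decomposition over the support of $B^*B^{*\top}$, and Step~3 is the paper's appeal to Corollary~3.2 of \cite{vu2013fantope} (your Davis--Kahan phrasing of the same fact). The only differences are cosmetic bookkeeping of constants (the paper uses $\|\tilde\Sigma-\Sigma^*\|_\infty\le\lambda$ directly, so the cone condition is not needed).
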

Minimax lower bounds for subspace estimation for MIM was established recently in~\cite{lin2017optimality}. For a fixed $k$, the above theorem is near-optimal from a minimax point of view. That is, the difference between the optimal rate and the above theorem is a factor of $\sqrt{s}$. We discuss more about this gap in Section~\ref{sec:optspca}. The proofs of Theorem~\ref{thm:pr1} and Theorem~\ref{thm:pr2} are in the supplementary material.

\begin{remark}
Recall that our discussion in \S\ref{sec:firstordertheory} and \S\ref{sec:secondordertheory} was under the assumption that the entries in $X$ are i.i.d. This could be relaxed to the case of weak dependence between the covariates without   any significant loss in the statistical rates we present in the theorems above. We do not focus on such an extension in this paper as we wanted to clearly convey the main message of the paper in a simpler setting.
\end{remark} 

\subsection{Optimality and Relation to Sparse PCA}\label{sec:optspca}

Now we discuss  the optimality of the results presented in \S\ref{sec:secondordertheory}. Throughout the discussion we assume that $k$ is fixed and does not increase with $n$. Note that the estimator for SIM in  \eqref{eq:sparsePCA1} and MIM in  \eqref{eq:sparsePCA2} are closely related to the semidefinite program based estimator for Sparse PCA problem~\citep{vu2013fantope}.    Let $X \in \mathbb{R}^d$ be a random vector such that $\EE(X) =0$ and covariance matrix $\Sigma= \EE (XX^\top)$ which is symmetric and positive definite. The problem of sparse PCA is to estimate projector onto the subspace spanned by top $k$ eigenvectors, $\{v^*_\ell \}_{\ell \in [k]}$ of  $\Sigma$ under the subspace sparsity assumption as discussed in Definition~\ref{def:mimmodel}. An estimator based on semidefinite programing with sparsity constraints was analyzed in~\cite{vu2013fantope, wang2016statistical}, which   is based on solving the following program 
\#\label{eq:sparsePC}
&\textrm{maximize}~~ \la W, \hat \Sigma \ra  -  \lambda \| W \|_{1 } \notag \\
&\text{subject to}~~ 0 \preceq W \preceq I_d, ~~\trace (W ) = k.
\#
Here $\hat \Sigma=n^{-1} \sum_{i=1}^n X_i X_i^\top$ is the sample covariance matrix given $n$ i.i.d copies $\{ X_i\}_{i=1}^n$ of $X$. Note that the main difference between the SIM estimator and the sparse PCA estimator is the use of $\tilde \Sigma$ in place of $\hat \Sigma$. It is known that sparse PCA problem exhibits interesting statistical-computational tradeoff~\citep{krauthgamer2015semidefinite,wang2016statistical} which naturally appears in the context of SIM as well. Indeed while the minimax optimal statistical rate for sparse PCA is $\cO(\sqrt{s^* \log d/n})$, the SDP estimator achieves $\cO(s^*\sqrt{ \log d/n})$ under the assumption that $X$ is light-tailed.
It is also known that when $n = \Omega( 	 {s^*}^2 \log d)$, one can obtain the optimal statistical rate of $\cO(\sqrt{s^* \log d/n})$ either by nonconvex methods~\citep{wang2014tighten}, or refinements to the output of the SDP estimator~\citep{wang2016statistical}. However their results rely on the sharp concentration of $\hat \Sigma$ to $\Sigma$ in the restricted operator norm:
\# \label{eq:rsnorm}
\| \hat \Sigma - \Sigma ^* \|_{op, s^*}  &=\sup \bigl \{ w ^\top ( \hat\Sigma - \Sigma ) w \colon \| w \|_2 = 1, \| w \|_0 \leq s^* \bigr \}  \nonumber \\ & = \cO(\sqrt{s^* \log d/n}). 
\#
When $X$ has heavy-tailed entries, for example bounded fourth moment assumptions, its highly unlikely that,  \eqref{eq:rsnorm} holds. Indeed the results in~\cite{wang2016statistical} and~\cite{wang2014tighten} are applicable only to the case of Gaussian or light-tailed $X$.  
\subsubsection{Heavy-tailed Sparse PCA}
Recall that our estimators utilize a data-driven truncation argument to handle heavy-tailed distributions. Owing to the close relationship between our SIM/MIM estimators and the sparse PCA estimator, it is natural to ask whether such a truncation argument could lead to sparse PCA estimators for heavy tailed $X$. Below we show that it is indeed possible to obtain a near-optimal estimator for sparse PCA with heavy-tailed data based on the truncation argument. For a vector $v \in \mathbb{R}^d$, let $\vartheta(v)$ be a truncation operation that operators entry-wise as $\vartheta_j(v) = \sign[v_j]\cdot \min \left\{|v_j| , \tau\right\}$ for $j= 1, \ldots d$. Then, our estimator is defined as follows.
\#\label{eq:sparsePCheavy}
&\textrm{maximize}~~ \la W, \overbar \Sigma \ra -  \lambda \| W \|_{1 } \notag \\
&\text{subject to}~~ 0 \preceq W \preceq I_d, ~~\trace (W ) = k.
\#
where  $\overbar \Sigma = n^{-1} \sum_{i=1}^n \overbar{X}_i \overbar{X}_i^\top$ and $\overbar{X}_i =\vartheta(X_i)$, for $i=1,\ldots n$. For the above estimator, we have the following theorem under the assumption that $X$ has heavy-tailed marginals. Let $V ^* = (v^*_1 \ldots v^*_k) \in \RR^{d\times k} $ and assume that $\rho_0 = \lambda_k(\Sigma) - \lambda_{k+1}(\Sigma) > 0$.

\begin{theorem}\label{thm:htspca}
Let $\hat W $ be the solution of the optimization in \eqref{eq:sparsePCheavy} and let $\hat V  $ be matrix of top-$k$ eigenvectorssim of $\hat W$. We set the regularization parameter in \eqref{eq:sparsePCheavy} as $\lambda = C_1 \sqrt{M   \log d / n}$ and set the truncation parameter by  $\tau = (C_2 M n /\log d)^{1/4}$, where $C_1$ and $C_2$ are   some positive constants. Furthermore, assume that $V^*$ contains only $s^*$ nonzero rows and that $X$ satisfies   $\EE| X_j|^4 \leq M$ and $\EE |X_i\cdot X_j|^2 \leq M$. Then, with probability at least $1 - d^{-2}$, we have
\$
\inf_{O\in \mathbb{O}_{k} }\|\hat V  - V^* O \|_2 \leq 4 \sqrt{2} / \rho_0 \cdot s^* \lambda .
\$
\end{theorem}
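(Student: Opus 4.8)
The plan is to follow the Fantope-based analysis of the sparse PCA SDP, exactly as in the proof of Theorem~\ref{thm:pr1}/\ref{thm:pr2}, and to reduce everything to an entrywise concentration bound for the truncated covariance estimator $\overbar\Sigma$. Let $\Pi^* = V^* V^{*\top}$ be the projector onto the span of the top-$k$ eigenvectors of $\Sigma$. Since $\rho_0 = \lambda_k(\Sigma) - \lambda_{k+1}(\Sigma) > 0$, $\Pi^*$ uniquely maximizes $W\mapsto \la W,\Sigma\ra$ over the Fantope $\mathcal{F} = \{W : 0\preceq W \preceq I_d,\ \trace(W)=k\}$, and the curvature inequality $\la \Sigma, \Pi^* - W\ra \ge (\rho_0/2)\|W-\Pi^*\|_{\fro}^2$ holds for every $W\in\mathcal{F}$ \citep{vu2013fantope}. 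Writing $\Delta = \hat W - \Pi^*$ and combining this with the optimality of $\hat W$ in \eqref{eq:sparsePCheavy}, i.e. $\la \hat W,\overbar\Sigma\ra + \lambda\|\hat W\|_1 \ge \la \Pi^*,\overbar\Sigma\ra + \lambda\|\Pi^*\|_1$, we obtain
\[
\frac{\rho_0}{2}\|\Delta\|_{\fro}^2 \;\le\; \la \Delta,\, \overbar\Sigma - \Sigma\ra + \lambda\bigl(\|\Pi^*\|_1 - \|\hat W\|_1\bigr).
\]
By H\"older's inequality the first term on the right is at most $\|\overbar\Sigma-\Sigma\|_{\infty}\,\|\Delta\|_1$ (with $\|\cdot\|_\infty$ the elementwise max norm), and, letting $J\subseteq[d]$ be the set of $s^*$ indices of nonzero rows of $V^*$ and $S = J\times J$ (so that $\Pi^*$ is supported on $S$), the triangle inequality gives $\|\Pi^*\|_1 - \|\hat W\|_1 \le \|\Delta_S\|_1 - \|\Delta_{S^c}\|_1$, all exactly as in the standard sparse PCA analysis.

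The key step is to show that $\|\overbar\Sigma - \Sigma\|_\infty \le \lambda/2$ on an event of probability at least $1-d^{-2}$. Fix a pair $(j,\ell)$ and split $\overbar\Sigma_{j\ell} - \Sigma_{j\ell}$ into the deterministic bias $\EE[\overbar X_j\overbar X_\ell] - \EE[X_j X_\ell]$ and the centered fluctuation $\overbar\Sigma_{j\ell} - \EE[\overbar X_j\overbar X_\ell]$. For the bias, $|\overbar X_j\overbar X_\ell - X_j X_\ell| \le 2|X_jX_\ell|\cdot\1\{|X_j|>\tau \text{ or } |X_\ell|>\tau\}$ pointwise, so Cauchy--Schwarz together with $\EE|X_jX_\ell|^2\le M$ and the Markov bound $\P(|X_j|>\tau)\le M/\tau^4$ (from $\EE|X_j|^4\le M$) bounds the bias by a constant multiple of $M/\tau^2$, which for $\tau=(C_2Mn/\log d)^{1/4}$ equals a constant times $\sqrt{M\log d/n}$. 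For the fluctuation, the i.i.d. summands $\overbar X_{ij}\overbar X_{i\ell}$ are bounded by $\tau^2$ in absolute value and have variance at most $\EE[X_j^2X_\ell^2]\le M$, so Bernstein's inequality gives a deviation of order $\sqrt{M\log d/n} + \tau^2\log d/n$, whose second term is again $O(\sqrt{M\log d/n})$ for our choice of $\tau$. A union bound over the $d^2$ pairs (absorbing the resulting factor $2\log d$ into $C_1,C_2$) yields $\|\overbar\Sigma-\Sigma\|_\infty \le \lambda/2 = \tfrac12 C_1\sqrt{M\log d/n}$.

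On this event, substituting $\|\overbar\Sigma-\Sigma\|_\infty\le\lambda/2$ and $\|\Delta\|_1 = \|\Delta_S\|_1 + \|\Delta_{S^c}\|_1$ into the displayed inequality makes the coefficient of $\|\Delta_{S^c}\|_1$ negative, so that term may be dropped, leaving $(\rho_0/2)\|\Delta\|_{\fro}^2 \le (3\lambda/2)\|\Delta_S\|_1 \le (3\lambda/2)\,s^*\|\Delta\|_{\fro}$ since $|S|\le (s^*)^2$; thus $\|\hat W - \Pi^*\|_{\fro} = O(s^*\lambda/\rho_0)$. A Davis--Kahan / $\sin\Theta$ argument then converts this Frobenius bound on $\hat W - \Pi^*$ into a bound on the subspace distance: the top-$k$ eigenprojector of $\hat W$ lies within a constant factor of $\|\hat W - \Pi^*\|_{\fro}$ of $\Pi^*$, and $\inf_{O\in\mathbb{O}_k}\|\hat V - V^*O\|_2$ is in turn controlled by the distance between these two projectors; keeping track of the constants (including the curvature constant) gives the stated bound $\inf_{O\in\mathbb{O}_k}\|\hat V - V^*O\|_2 \le (4\sqrt2/\rho_0)\,s^*\lambda$.

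The main obstacle is the entrywise concentration of the second paragraph. Under only the fourth-moment hypotheses $\EE|X_j|^4\le M$ and $\EE|X_iX_j|^2\le M$, the sample covariance of the untruncated data need not concentrate at the parametric rate, so the truncation is essential, and one must choose $\tau$ to make the truncation bias ($\asymp M/\tau^2$) and the Bernstein fluctuation ($\asymp \tau^2\log d/n$) simultaneously of order $\sqrt{M\log d/n}$ --- the choice $\tau\asymp(Mn/\log d)^{1/4}$ is precisely this balancing point. Once this $\|\cdot\|_\infty$ bound is in hand, the remaining steps are the by-now-routine Fantope SDP analysis.
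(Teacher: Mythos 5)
Your proposal is correct and follows essentially the same route the paper intends: it omits the proof of Theorem~\ref{thm:htspca}, stating it is analogous to Theorem~\ref{thm:pr2}, and your argument is exactly that adaptation — the Fantope curvature inequality plus feasibility of $V^*V^{*\top}$, H\"older with the elementwise max norm, the $|S|\le (s^*)^2$ support bound, and the Vu--Lei $\sin\Theta$ conversion, with the only new ingredient being the bias--variance split for $\overbar\Sigma$ under the fourth-moment assumptions, which mirrors the paper's Lemma~\ref{lemma:truncation2}.
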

The proof of the above theorem is similar to that of Theorem~\ref{thm:pr2} and hence we omit it. The above theorem shows that with  elementwise truncation,  as long as $X$ satisfies a bounded fourth moment condition, the SDP estimator for sparse PCA achieves the near-optimal statistical rate of $\cO(s^*\sqrt{ \log d/n})$. We end this section with the following questions based on the above discussions:
\begin{enumerate}
\item Can we obtain optimal statistical rate for sparse PCA problem ($\cO(\sqrt{s^*\log d/n})$) when $X$ has only bounded fourth moment in the high sample size regime $n =\Omega( {s^*}^2 \log d)$?
\item Can we obtain optimal statistical rate ($\cO(\sqrt{s^*\log d/n})$) when $n =\Omega( {s^*}^2 \log d)$ and when $f,X$ and $Y$ satisfies the heavy-tail condition in Assumption~\ref{assume:moments} for the MIM problem?
\end{enumerate}
The answer to both questions lie in constructing truncation based estimators that concentrate sharply in restricted operator norm as in \eqref{eq:rsnorm} or more realistically exhibit one-sided concentration bounds (see~\cite{mendelson2015learning} and~\cite{oliveira2013lower} for related results and discussion). Obtaining such an estimator seems to be  challenging   for heavy-tailed sparse PCA and it it not immediately clear if it is possible. We plan to report our findings for the above problem in the near future.

\section{Numerical Experiments}
We now provide simulation experiments for the case of first-order and second-order SIMs. For the first-order SIM, we concentrate on the sparse vector and low-rank matrix model. Note that our tensor estimator is similar to the low-rank matrix estimator. Furthermore, for the second-order case, we concentrate on the problem of robust sparse phase retrieval. 

\textbf{First-order SIM}: We let $\epsilon \sim N(0,1)$ and  set the link function   in \eqref{eq:simmodel} as one of $f_1(u) = 3 u + 10 \sin(u) $ and  $f _2(u) = \sqrt{2} u + 4 \exp(-2 u^2)$, which are  plotted in Figure \ref{fig:link}. We set $p_0$ to be one  of (i) Gamma distribution with shape parameter  $5$ and scale parameter  $1$, (ii) Student's t-distribution with $5$ degrees of freedom, and (iii) Rayleigh distribution with scale parameter    $2$. To measure the estimation accuracy, we use the cosine distance $ \cos \theta(\hat \beta , \beta^* )  = 1- \| \hat \beta \| _{\bullet}^{-1}  | \la \hat \beta, \beta^* \ra |$, where $\bullet$ stands for the Euclidean norm in the vector case and the Frobenius norm when $\beta^*$ is a  matrix. Here we report the cosine distance rather than  $\| \hat \beta - \mu \beta^* \|_{\bullet}$ to compare the performances for  $X$ having different distributions, where $ \mu $ may have different values.

For the vector case, we fix $d = 2000$, $s^* = 5$ and vary $n$. The support of $\beta^*$  is chosen  uniformly random among all subsets of $\{1, \ldots, d\}$. For each $j \in \supp(\beta^*)$, we set $\beta^*_j = 1/\sqrt{s^*} \cdot \gamma_j$, where each $\gamma_j$ is an i.i.d. Rademacher random variable.   In addition,  the   regularization parameter $\lambda$ is set to $4 \sqrt{ \log d/n}$.  We plot the cosine distance against the signal strength $\sqrt{s ^* \log d/n}$ in Figure \ref{fig:simulation_sparse}-(a) and (b) for $f_1$ and $f_2$ respectively,  based on $200$ independent trials for each $n$. As shown in this figure,  the estimation error grows sub-linearly as a function of the signal strength.

As for the matrix case, we fix $d_1 = d_2  =    20$, $r^* = 3$ and let $n$ vary. The signal parameter $\beta^*$ is equal to  $U S V^\top$, where $U, V \in \RR^{d\times d}$ are random orthogonal matrices and $S$ is a diagonal matrix with $r^*$ nonzero entries. Moreover, we set the nonzero diagonal entries of $S$ as $1/ \sqrt{r^*}$, which implies $\| \beta^* \|_{\fro } = 1$. We set   the regularization parameter as $\lambda = 2   \sqrt{ (d_1 + d_2)  \log (d_1 + d_2) / n}$.  Furthermore, we  use the proximal gradient descent algorithm (with the learning rate fixed to $0.05$) to solve  the nuclear norm regularization  problem in \eqref{eq:estimator}. To present the result, we plot the cosine distant against  the signal strength $\sqrt{r ^* (d_1 + d_2)  \log (d_1 + d_2) / n} $ in Figure~\ref{fig:simulation_lowr}  based on  $200$ independent trials for both $f_1$ and $f_2$.  As shown in this figure, the error is bounded by a linear function of the signal strength, which corroborates  Theorem \ref{thm:lowrank}.

\begin{figure}[htb]
	\centering
	\begin{tabular}{cc}
		\hskip-27pt\includegraphics[width=0.22\textwidth]{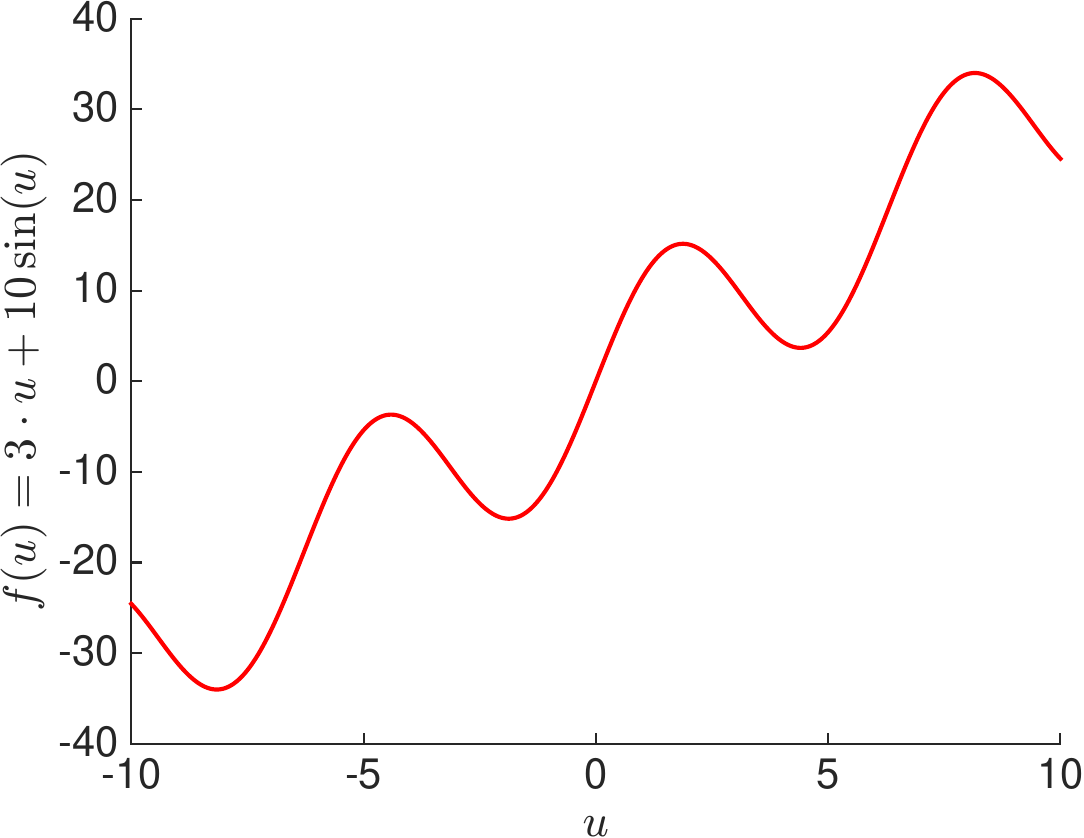}
		&
		\hskip-5pt\includegraphics[width=0.22\textwidth]{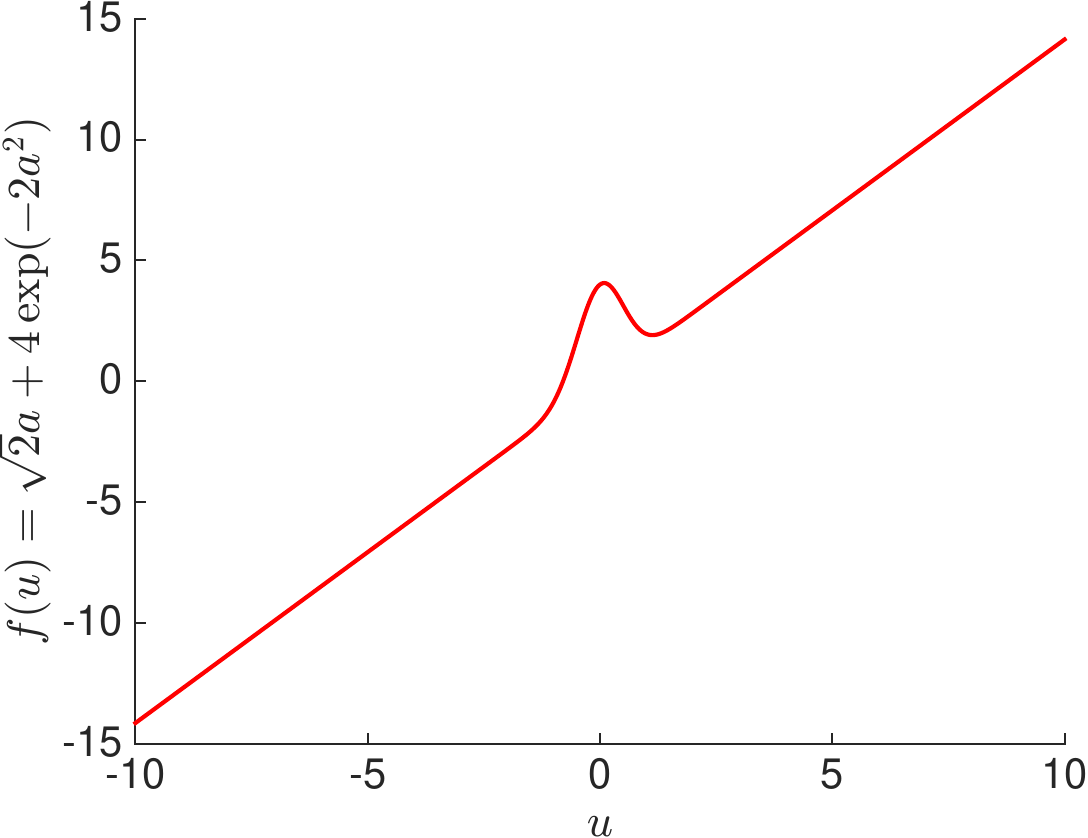}\\
	\end{tabular}\\
\caption{Plot of the link functions $f_1(u) =    3 u + 10 \cdot \sin(u) $ and $f_2 (u) = \sqrt{2} u + 4 \exp(-2 u^2)$.}  
\label{fig:link}
\end{figure}
\begin{figure}[htb]
	\centering
	\begin{tabular}{cc}
		\hskip-27pt\includegraphics[width=0.22\textwidth]{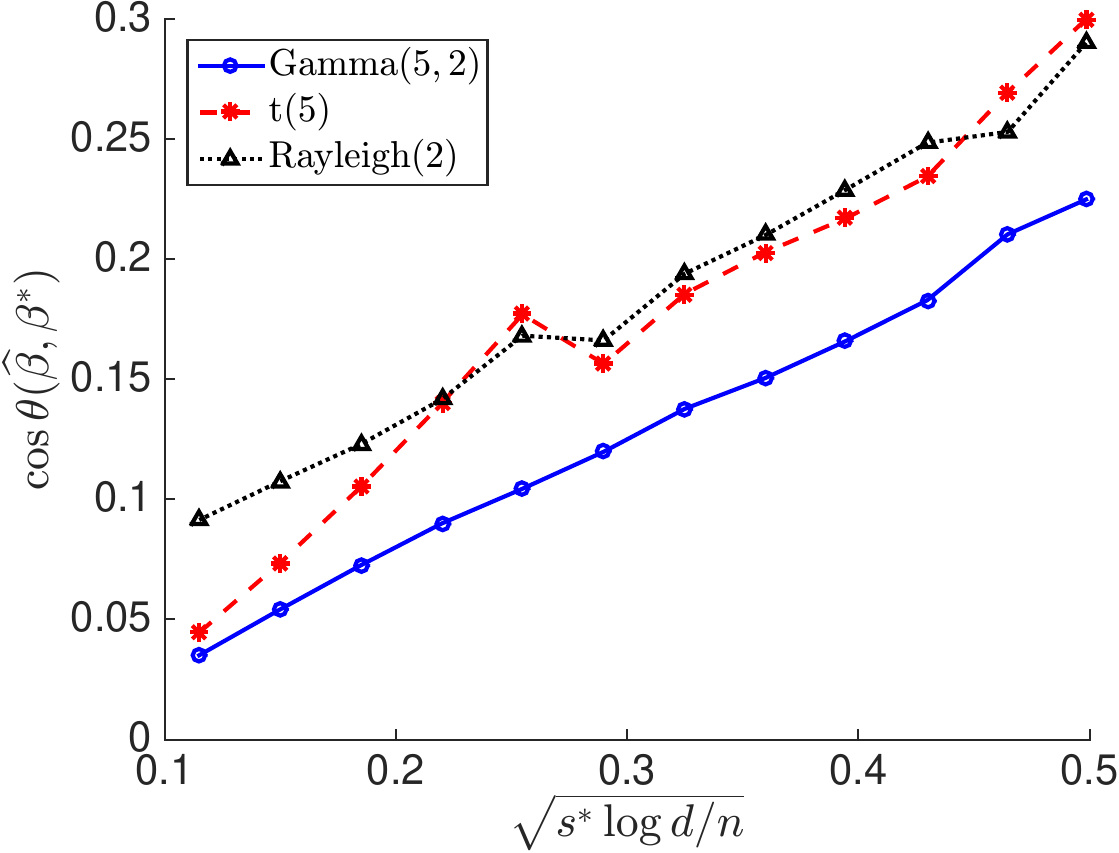}
		&
		\hskip-5pt\includegraphics[width=0.22\textwidth]{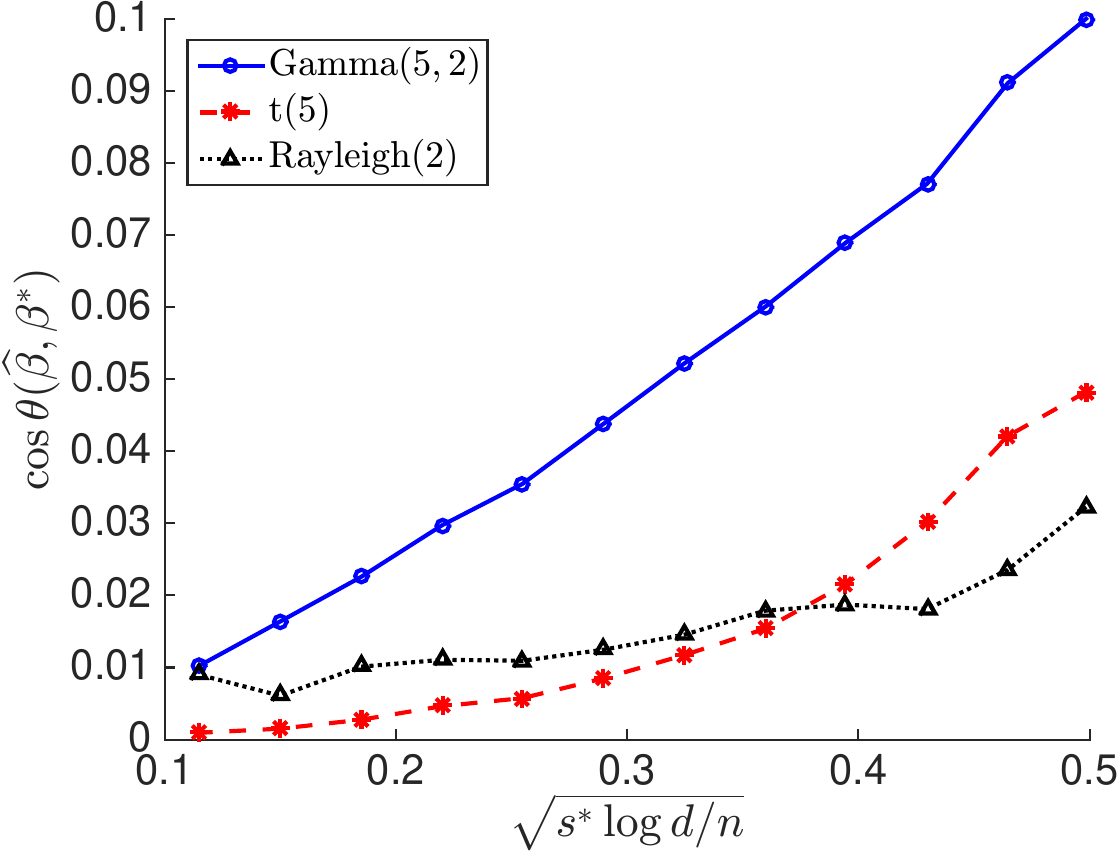}\\
	\end{tabular}\\
	\caption{Cosine distances between the true parameter and the estimated parameter in the sparse SIM with the link function in \eqref{eq:simmodel} set to one of $f_1$ and $f_2$. Here we set  $d = 2000  $. $s^ * = 5$ and vary $n$. }
	\label{fig:simulation_sparse}
\end{figure}
  \begin{figure}[h]
	\centering
	\begin{tabular}{cc}
		\hskip-27pt\includegraphics[width=0.22\textwidth]{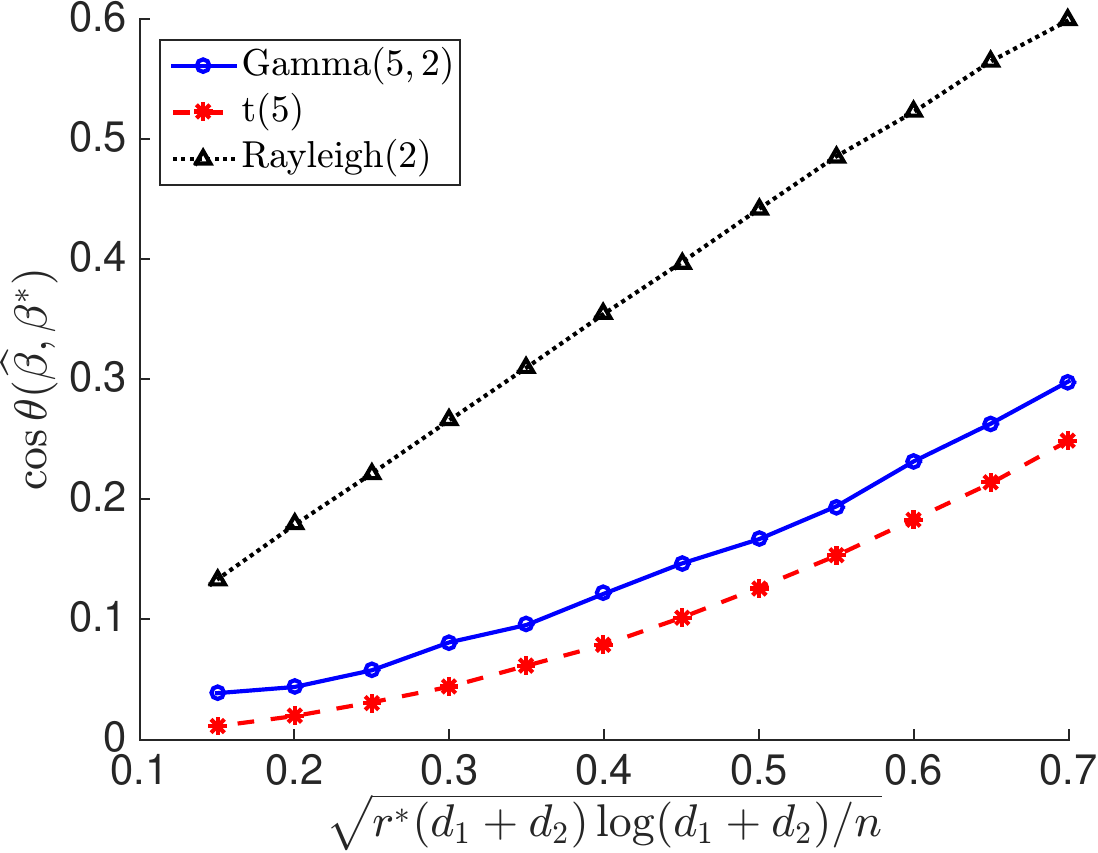}
		&
		\hskip-5pt\includegraphics[width=0.22\textwidth]{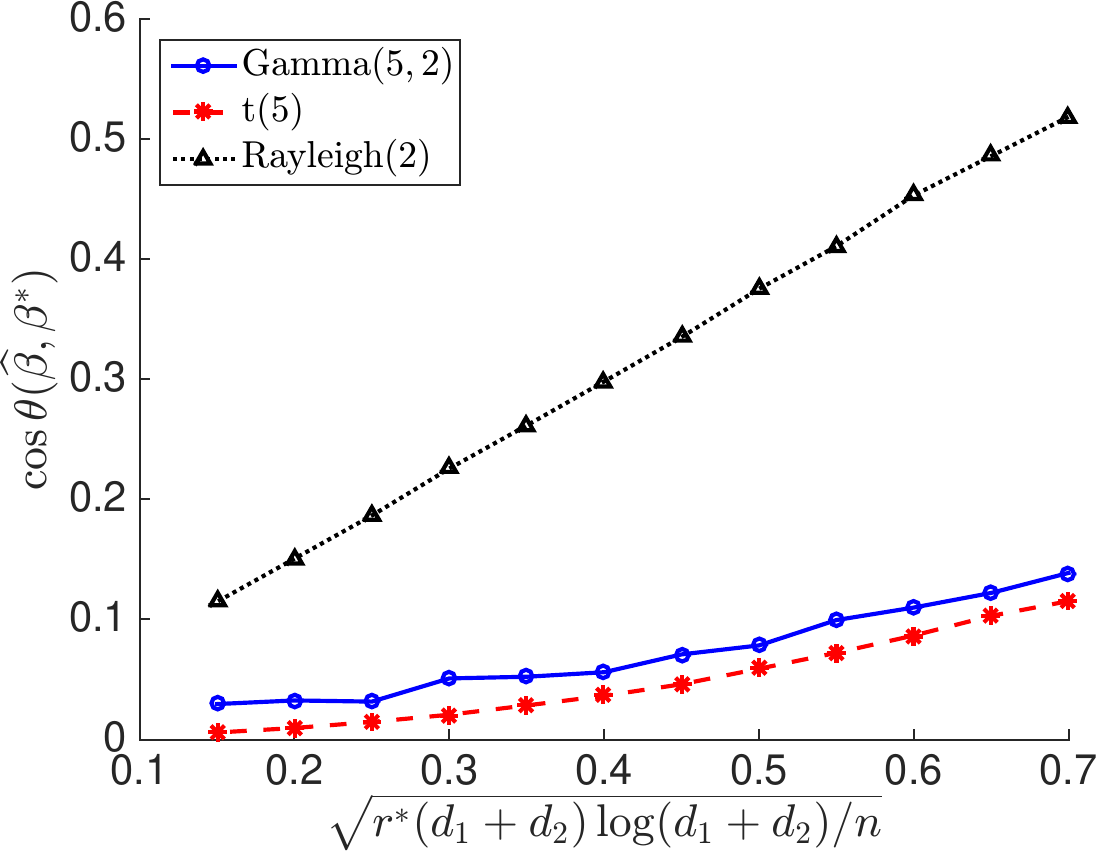}\\
	\end{tabular}\\
	\caption{Cosine distances between the true parameter and the estimated parameter in the low-rank SIM for  with link function in \eqref{eq:simmodel} set to one of $f_1$ and $f_2$. Here we set  $d_1 = d_2 = 20$. $r^* = 3$ and vary $n$.}
	\label{fig:simulation_lowr}
\end{figure}

\textbf{Second-order SIM}: We now concentrate on the problem of sparse phase retrieval using the SDP based estimators proposed based on second-order Stein's identity. Recall that in this case, the link function is known and existing convex and non-convex based estimators are applicable  predominantly for the case of Gaussian or light-tailed data. The question of de-randomization or what are the necessary assumptions on the measurement vectors for (sparse) phase retrieval to work is an intriguing one~\citep{gross2015partial}. Here we demonstrate that using the proposed score-based estimators, one could deal with heavy-tailed and skewed measurement as well, which significantly extend the class of measurement vectors applicable for sparse phase retrieval.
 
Recall that the covariate $X$ has i.i.d. entries with distribution $p_0$. We set $p_0$ to be one of Gamma distribution with shape parameter $5$ and scale parameter  $1$ and  Rayleigh distribution with scale parameter $2$. The random noise $\epsilon$ is set to be standard Gaussian.  Moreover, we solve the optimization problems in \eqref{eq:sparsePCA1} and \eqref{eq:sparsePCA2} via the alternating direction method of multipliers (ADMM)  algorithm proposed in \cite{vu2013fantope}, which introduces a dual variable to encode the constrains and updates the primal and dual variables iteratively.

We set the link function  to be one of $f_3(u) = u^2 $, $f_4 = |u|$, and $f_5 ( u) = 4 u^2 + 3 \cos(u)$. Here $f_3$ corresponds to  the phase retrieval model and $f_4$ and $f_5$ can be viewed as its robust extension. Throughout the experiment we  fix $d = 500$, $s^* = 5$ and vary $n$.  The support of $\beta^*$  is chosen uniformly random among all subsets of $\{1, \ldots, d\}$with cardinality $s^*$. For each $j \in \supp(\beta^*)$, we set $\beta^*_j = 1/\sqrt{s^*} \cdot \gamma_j$, where   $\gamma_j$'s  are  i.i.d. Rademacher random variables.  Furthermore, we fix the regularization parameter $\lambda =4 \sqrt{ \log d/n}$ and  threshold parameter $\tau= 20$.  In addition, we adopt  the cosine distance 
 $ \cos \theta(\hat \beta , \beta^* )  = 1-    | \la \hat \beta, \beta^* \ra |  
$, to measure the estimation error. We plot the cosine distance against the statistical rate of  convergence  $  s ^* \sqrt{ \log d/n}$ in Figure \ref{fig:simulation_quad}-(a)-(c) for each link function, respectively. The plot is based on $100$ independent trials for each $n$, which shows that  the estimation error is bounded by a linear function of $ s ^* \sqrt{ \log d/n}$, which corroborate the theory.
 
\begin{figure}[t]
	\centering
	\begin{tabular}{ccc}
		\hskip-30pt\includegraphics[width=0.15\textwidth]{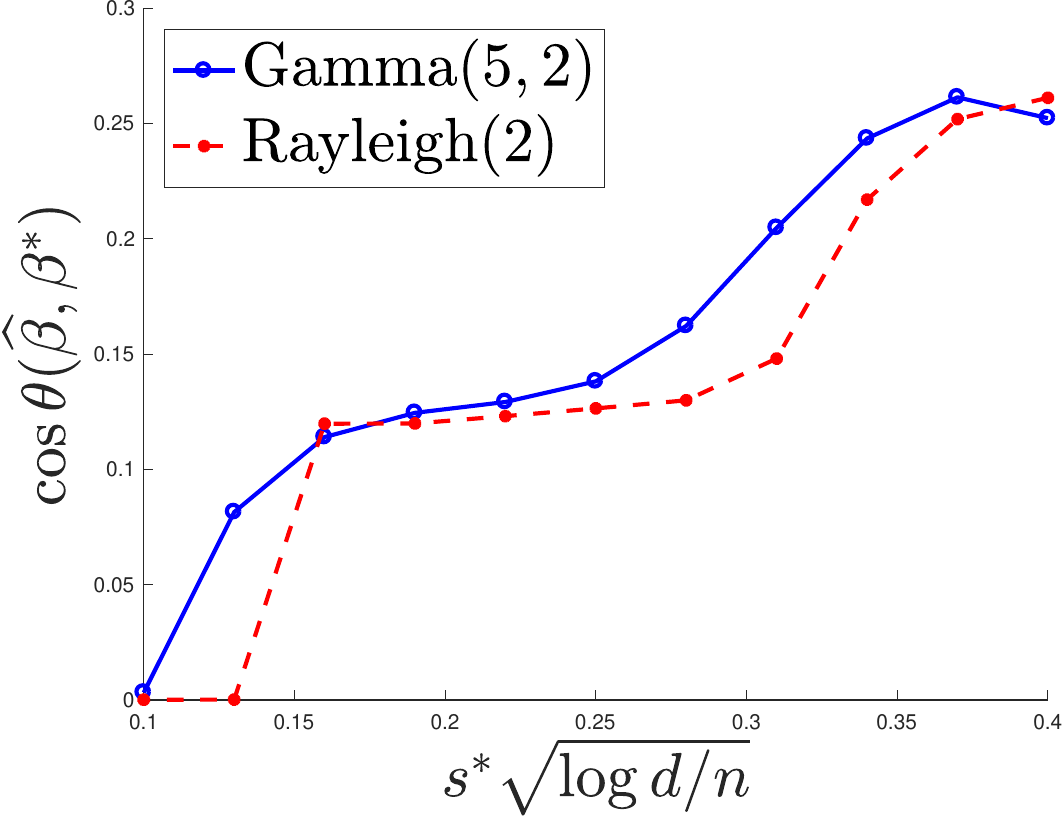}
		&
		\hskip-10pt\includegraphics[width=0.15\textwidth]{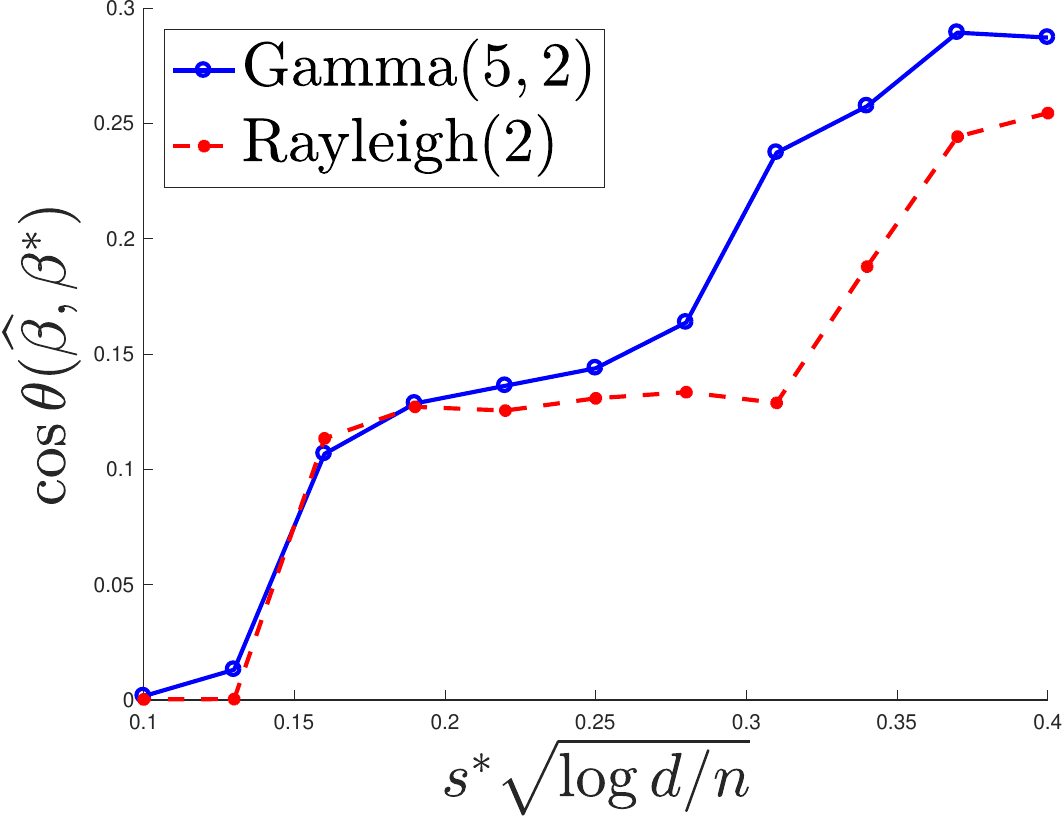} &
		\hskip-5pt\includegraphics[width=0.15\textwidth]{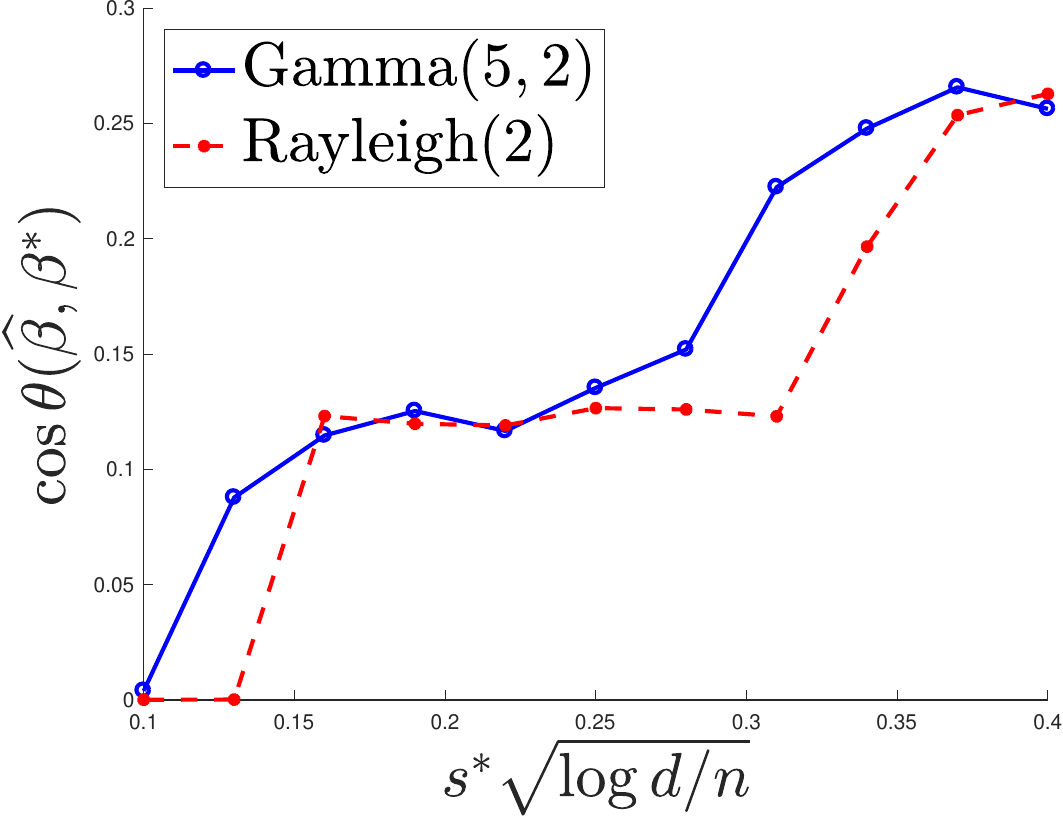} \\
		$f_3(u) = u^2$ & $f_4 (u) = | u|$, &$f_5 ( u) = 4 u^2 + 3 \cos(u)$
	\end{tabular}\\
	\caption{Cosine distances between the true parameter $\beta^*$ and the estimated parameter $\hat \beta$  in the sparse SIM with the link function being    one of $f_3$, $f_4$, and $f_5$. Here we set  $d  = 500  $. $s^ * = 5$ and vary $n$. }
	\label{fig:simulation_quad}
\end{figure}


\section{Conclusion}\label{sec:conclusion}

In this work, we consider estimating the parametric components of single and multiple index models in the high-dimensional setting, under fairly general assumptions on the link function $f$ and response $Y$. Furthermore, our estimators are applicable in the non-Gaussian setting where $X$ is not required to satisfy restrictive Gaussian or elliptical symmetry assumptions. Our estimators are based on a data-driven truncation argument in combination with first and second-order Stein's identity. Furthermore, we show that proposed estimators are near-optimal for several different settings. 
 
 

Recently in the low-dimensional setting, for 2-layer neural networks~\cite{janzamin2015beating} proposed a tensor-based method for estimating the parametric components. Their estimators are sub-optimal even when we consider the low-dimensional Gaussian setting. An immediate application of our truncation based estimators enables us to obtain optimal results for a fairly general class of covariates in the low-dimensional setting. Obtaining similar optimal or near-optimal results in the high-dimensional setting is of great interest for 2-layer neural networks, albeit challenging. We plan to extend the result of this paper for 2-layer neural networks in the high-dimensional setting and report our results in the near future.
\bibliographystyle{plainnat}
\bibliography{main}
\clearpage
\appendix{}

\section{Proofs of the Main Results}
In this section, we lay out the proofs of the theorems in \S\ref{sec:firstordertheory} and \S\ref{sec:secondordertheory}, which establish the statistical rates of convergence of our estimators.
\subsection{Proof of Theorem \ref{thm:sparse}}

\begin{proof}    
Since $\hat \beta $ is the solution of the optimization problem in  \eqref{eq:estimator}, the first-order optimality condition states that 
\#\label{eq:optim_cond}
\nabla L(\hat \beta ) + \lambda \xi  =  0,~~\text{where}~~\xi \in \partial \| \hat \beta \|_1.
\#
Then the entries of $\xi \in \RR^d$ are given by
\$
\xi_j = \sign (\hat \beta_j), ~~\forall j \in \supp(\hat \beta);\\ \xi_j \in [-1, 1], ~~\forall j \notin \supp(\hat \beta).
\$
For any index set $\cA \subseteq [d]$ and $z \in \RR^d$, we define the restriction of $z$ to $\cA$, $z_{\cA} \in \RR^{d}$,  by  letting
\$
[ z_{\cA}] _j  = z_j ~~\text{if}~~j\in \cA, ~~[ z_{\cA}] _j = 0~~\text{otherwise}.
\$
Here $[ z_{\cA}] _j  $ is the $j$-th entry of $z_{\cA}$.
Let $\cS = \supp(\beta^*)$, then
we can write $\xi = \xi_{\cS} + \xi _{\cS^c}$.     For notational simplicity, in the sequel, we define $\theta = \hat \beta - \mu \cdot \beta ^*$.   Thus by \eqref{eq:optim_cond} it holds that 
\#\label{eq:upper_quad1} 
& \la \nabla L(\hat \beta ) - \nabla L(\mu\beta^* ),\theta  \ra  = \la - \lambda \cdot \xi - \nabla L(\mu\beta^* ), \theta \ra  \notag \\
& \qquad \leq \la - \lambda \cdot \xi_{\cS} - \lambda \cdot \xi_{\cS^c} ,\theta \ra + \| \nabla L (\mu \beta ^*) \|_{\infty} \cdot \| \theta   \|_1.
\#
By the definition of $\xi$, we have \#\label{eq:upper2}
\la - \lambda \cdot \xi_{\cS^c} ,\hat \beta - \mu \beta ^* \ra = - \lambda \cdot \| \hat\beta_{\cS^c} \|_1.
\#  Moreover, since $\| \xi \|_{\infty} \leq 1$, H\"older's inequality implies that 
\#\label{eq:upper3}
\la - \lambda \cdot \xi_{\cS} , \theta \ra \leq \lambda \cdot  \| \theta _{\cS } \|_1.
\#
Note that $\nabla ^2 L(\beta) = 2 I_d$. Combining \eqref{eq:upper_quad1}, \eqref{eq:upper2}, and \eqref{eq:upper3}, we obtain
\#\label{eq:upper4}
2 \| \theta \|_2 ^2 &= \la \nabla L(\hat \beta ) - \nabla L(\mu\beta^* ),\theta  \ra  \\ 
&\leq - \lambda \| \theta_{\cS^c} \|_1 + \lambda \| \theta _{\cS} \|_1  + \| \nabla L (\mu \beta ^*) \|_{\infty} \cdot \| \theta   \|_1.
\#
For an upper bound of the right-hand side of \eqref{eq:upper4}, we apply the following lemma to obtain an upper bound on $\| \nabla L (\mu \beta ^*) \|_{\infty} $. 
\begin{lemma} [Bound on $\| \nabla L (\mu \beta ^*) \|_{\infty} $] \label{lemma:grad_loss1}
We set the truncation level in \eqref{eq:thresholding}  as   $ \tau = 2  (   M \cdot n / \log d )^{1/4} $. Then we have
\$
\PP \Bigl [   \| \nabla    L (\mu \beta ^*)  \|_{\infty}   >  7 \sqrt{ M \cdot \log d/ n} \Bigr ] \leq d^{-2}.
\$
\end{lemma}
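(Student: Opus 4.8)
The goal is to bound $\| \nabla L(\mu\beta^*)\|_\infty$. First I would compute this gradient explicitly: since $L(\beta) = \langle \beta, \beta\rangle - \frac{2}{n}\sum_i \langle \cT(Y_i,X_i), \beta\rangle$ with $\cT(Y_i, X_i) = \tilde Y_i \tilde S(X_i)$, we have $\nabla L(\mu\beta^*) = 2\mu\beta^* - \frac{2}{n}\sum_i \tilde Y_i \tilde S(X_i)$. Recalling that $\mu\beta^* = \EE[Y\cdot S(X)]$ from the population identity, the quantity to control is $\big\| \EE[Y S(X)] - \frac{1}{n}\sum_i \tilde Y_i \tilde S(X_i)\big\|_\infty$, up to the factor $2$. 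So I would reduce to a coordinatewise (union bound over $d$ coordinates) control of $\big|\EE[Y S_j(X)] - \frac{1}{n}\sum_i \tilde Y_i \tilde S_j(X_i)\big|$.

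\smallskip

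For each fixed $j$, I would split the error into a bias term $\big|\EE[Y S_j(X)] - \EE[\tilde Y \tilde S_j(X)]\big|$ and a fluctuation term $\big|\EE[\tilde Y \tilde S_j(X)] - \frac{1}{n}\sum_i \tilde Y_i \tilde S_j(X_i)\big|$. The bias term is controlled using the truncation: $|Y S_j(X) - \tilde Y \tilde S_j(X)|$ is nonzero only when $|Y| > \tau$ or $|S_j(X)| > \tau$, and on that event one bounds the discrepancy by something like $|Y S_j(X)|\mathbf{1}\{|Y|>\tau\} + |Y S_j(X)|\mathbf{1}\{|S_j(X)|>\tau\}$; applying Cauchy--Schwarz together with Markov/Chebyshev (using $\EE Y^4 \le M$ and $\EE s_0^4(U)\le M$ from Assumption~\ref{assume:moments}) gives a bound of order $M/\tau^2$. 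With $\tau = 2(Mn/\log d)^{1/4}$ this is $O(\sqrt{M\log d/n}\,)$ — i.e.\ the bias is of the right order. For the fluctuation term, the summands $\tilde Y_i \tilde S_j(X_i)$ are i.i.d., bounded in absolute value by $\tau^2$, and have variance bounded by $\EE[(Y S_j(X))^2] \le \sqrt{\EE Y^4}\sqrt{\EE S_j(X)^4} \le M$ (Cauchy--Schwarz again). So Bernstein's inequality gives, for each $j$, a tail bound of the form $\exp(-c\, n t^2 / (M + \tau^2 t))$; choosing $t \asymp \sqrt{M\log d/n}$ and noting $\tau^2 t = 2\sqrt{M}(Mn/\log d)^{1/2}\cdot\sqrt{M\log d/n} = 2M$ is comparable to $M$, the exponent is $\asymp \log d$, so the bound is $d^{-c'}$. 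A union bound over $j \in [d]$ then yields the $d^{-2}$ probability after tuning constants, and adding the bias and fluctuation contributions gives the constant $7$ (the paper's constants $2$, $7$ are just bookkeeping from these two pieces plus the factor of $2$ from the gradient).

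\smallskip

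The plan is therefore: (i) express $\nabla L(\mu\beta^*)$ and reduce to coordinatewise concentration of a truncated empirical mean; (ii) bound the truncation bias by $O(M/\tau^2)$ via Cauchy--Schwarz and Markov; (iii) bound the fluctuation via Bernstein using the boundedness ($\le \tau^2$) and variance ($\le M$) of the truncated summands; (iv) balance the two by the stated choice of $\tau$ and union-bound over $d$ coordinates. The main obstacle — though it is more of a technical bookkeeping issue than a conceptual one — is making sure the truncation level $\tau$ simultaneously makes the bias small \emph{and} keeps the Bernstein exponent at order $\log d$; this is exactly why $\tau$ scales as $(Mn/\log d)^{1/4}$ (a fourth root, matching the fourth-moment assumption), and getting the variance bound to come out as $O(M)$ rather than something $\tau$-dependent requires using $\EE Y^4, \EE s_0^4 \le M$ on the \emph{untruncated} variables (truncation only decreases these moments). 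One should also be slightly careful that $S(X)$ has i.i.d.\ coordinates so that $\EE[\tilde S_j(X)]$ handling and the Cauchy--Schwarz variance bound are legitimate, but this is guaranteed by the model setup.
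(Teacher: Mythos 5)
Your proposal matches the paper's proof essentially step for step: the same decomposition of $\nabla L(\mu\beta^*)$ into a truncation-bias term controlled by Cauchy--Schwarz plus Chebyshev/Markov (giving $O(M\tau^{-2})$ per coordinate) and a fluctuation term controlled by Bernstein's inequality with the boundedness $\tau^2$ and variance bound $M$ from the untruncated fourth moments, followed by a union bound over the $d$ coordinates and the same balancing choice of $\tau$. No gaps; this is the paper's argument.
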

\begin{proof}
See \S \ref{sec:proof:lemmaLgrad_loss1} for a detailed proof.
\end{proof}
Thus by Lemma \ref{lemma:grad_loss1} and the choice of $\lambda$, we have 
  $\lambda > 2 \| \nabla L (\mu \beta ^*) \|_{\infty} $ with probability at least $1- d^{-2}$.  This implies that  
\#\label{eq:upper5}
2 \| \theta \|_2 ^2 \leq - \lambda /2 \cdot \| \theta_{\cS^c} \|_1  + 3 \lambda/2 \cdot \| \theta _{\cS} \|_1 \leq 2 \lambda \cdot  \| \theta_{\cS}\|_1.
\#
Since the leftmost term in \eqref{eq:upper5} is nonnegative, we obtain
  $ \| \theta _{\cS^c} \|_1 \leq 3\cdot  \| \theta _{\cS} \|_1 $.  In addition, since  $|\cS| = s^*$, $\| \theta_{\cS}\|_1 \leq \sqrt{s^* }  \cdot \| \theta_{\cS}\|_2$. Thus by  \eqref{eq:upper5} we have $ \| \theta  \| _2\leq \sqrt{s^*} \cdot \lambda $.   Moreover, we also have $\| \theta_{\cS}\|_1 \leq s^* \lambda$, which further implies  that 
\$
\| \theta \|_1 = \| \theta _{\cS} \|_1 + \| \theta _{\cS^c} \|_1 \leq 4 \cdot \| \theta _{\cS} \|_1 \leq 4 s ^* \lambda.
\$
Therefore, we conclude the proof.
\end{proof}

\subsection{Proof of Theorem \ref{thm:lowrank}}\label{app:lr}
\begin{proof}
The proof of Theorem \ref{thm:lowrank} is parallel to that of Theorem \ref{thm:sparse}. Here the difference is to handle the nuclear norm regularization, instead of the $\ell_1$-penalty.
Since $\hat \beta  $ is the solution of the optimization problem in  \eqref{eq:estimator}, the first order optimality condition states that 
\#\label{eq:optim}
  L(\hat \beta ) + \lambda \| \hat \beta \|_{\nuc} \leq   L(\mu \beta^* )  + \lambda \| \mu \beta^* \| _{\nuc}.
\#
To simplify the  notation, we define $\Theta = \hat \beta - \mu \cdot \beta ^*$. Since $ L $ is quadratic, 
\#\label{eq:quad_loss}
L( \hat \beta ) - L(\mu \beta^* ) = \la \nabla L(\mu \beta^*), \Theta \ra  + 2 \| \Theta \|_{\fro}^2, 
\#
where $\nabla L$ takes values in $\RR^{d_1 \times d_2}$.
Then combining \eqref{eq:optim}, \eqref{eq:quad_loss}, and H\"older's inequality, we have   
\#\label{eq:upper_quad1}
2 \| \Theta   \|_{\fro}^2  & \leq - \la \nabla L(\mu \beta^*), \Theta \ra    + \lambda \|  \mu \beta^* \|_{\nuc} - \lambda \| \hat \beta \|_{\nuc} \notag \\ & \leq \bigl \| \nabla L(\mu \beta^*) \bigr \|_{\oper} \cdot \| \Theta \|_{\nuc} +     \lambda \|  \mu \beta^* \|_{\nuc} - \lambda \| \hat \beta \|_{\nuc}.
\# 
In the following, we focus on the term $\| \mu \beta^* \|_{\nuc} - \| \hat \beta \|_{\nuc}$ in \eqref{eq:upper_quad1}. 
Let $  U  \Lambda^* V^\top $ be the singular value decomposition of $\mu \beta^*$, where 
$U\in \RR^{d_1\times d_1} $ and $V \in \RR^{d_2 \times d_2} $ are orthogonal matrices, and $\Lambda^* \in \RR^{d_1 \times d_2}$ be  formed by the singular values of $\mu\beta^*$.  Moreover, since $ \rank(\beta^* )= r^*$, $\Lambda ^*$ can be written~in block form~as
\#\label{eq:svd_block}
\Lambda ^* = \begin{bmatrix} 
\Lambda^*_{11} & 0 \\
0 & 0 
\end{bmatrix},
\#
where $\Lambda^*_{11} \in \RR^{r^*\times r^*}$ is a diagonal matrix whose diagonal elements are the nonzero singular values of $\mu \beta^*$.  We define $\Gamma = U^{\top} \Theta V $, which can be written in block form as 
\$
\Gamma = \begin{bmatrix}
\Gamma_{11} & \Gamma _{12} \\
\Gamma_{21} & \Gamma_{22} 
\end{bmatrix},
\$ 
where $\Gamma _{11} \in \RR^{r^* \times r^*}$. In addition, we define matrices 
\$
\Gamma ^{(1)} = \begin{bmatrix}
0& 0\\
0 & \Gamma_{22} 
\end{bmatrix} ~~\text{and}~~\Gamma^{(2)} = \begin{bmatrix}
\Gamma_{11} & \Gamma _{12} \\
\Gamma_{21} & 0
\end{bmatrix}.
\$
Then by \eqref{eq:svd_block} and   triangle inequality of the nuclear norm, we have 
\#\label{eq:upper_quad2}
& \| \hat \beta \|_{\nuc} =  \|  \mu \beta^* + \Theta \|_{\nuc} = \| U (\Lambda ^* + \Gamma) V^{\top} \|_{\nuc}     \notag \\
& \qquad  = \| \Lambda ^* + \Gamma \|_{\nuc}
   \geq  \| \Lambda ^* + \Gamma^{(1)} \|_{\nuc} - \| \Gamma ^{(2)} \|_{\nuc}\notag \\
   &\qquad  = \| \Lambda^* \|_{\nuc} + \| \Gamma ^{(1)} \|_{\nuc} - \| \Gamma ^{(2)} \|_{\nuc},
\#
where the last equality follows from the fact that $\Lambda ^* + \Gamma^{(1)} $ is block diagonal.
Since $\| \mu \beta ^* \|_{\nuc} = \| \Lambda ^* \|_{\nuc}$, by \eqref{eq:upper_quad2} we obtain
\#\label{eq:upper_quad3}
\| \mu \beta^* \|_{\nuc} - \| \hat \beta \|_{\nuc} \leq  \| \Gamma ^{(2)} \|_{\nuc} - \| \Gamma ^{(1)} \|_{\nuc}.
\#
In addition, triangle inequality implies that 
\#\label{eq:upper_quad4}
\| \Theta \|_{\nuc} = \| U \Gamma  V ^\top \|_{\nuc} \leq \| \Gamma^{(1)} \|_{\nuc} + \| \Gamma^{(2)} \|_{\nuc}.
\#
Thus combining \eqref{eq:upper_quad2}, \eqref{eq:upper_quad3}, \eqref{eq:upper_quad4},  we have 
\#\label{eq:upper_quad5}
   2 \| \Theta   \|_{\fro}^2  &  \leq     \bigl ( \bigl \| \nabla L(\mu \beta^*) \bigr \|_{\oper} +  \lambda  \bigr )  \cdot  \| \Gamma ^{(2)} \|_{\nuc}  \nonumber \\ 
 & \qquad +  \bigl ( \bigl \| \nabla L(\mu \beta^*) \bigr \|_{\oper} - \lambda \bigr )\cdot  \| \Gamma^{(1)}\|_{\nuc}  .
\#
We utilize the following lemma to obtain an upper bound of  $\| \nabla L (\mu \beta ^*) \|_{\oper} $.

\begin{lemma} [Upper bound of $\| \nabla L (\mu \beta ^*) \|_{\oper} $] 
\label{lem:grad_loss2}
Let  $L\colon \RR^{d_1\times d_2} \rightarrow \RR$ be the loss function defined in \eqref{eq:define_a_loss} for the matrix setting. Setting   $$ \kappa = 2  \sqrt{n \cdot \log (d_1 + d_2)}  /\sqrt{  (d_1 + d_2 )M},$$  then it holds that 
\$
\PP \Bigl [ \| \nabla L (\mu \beta ^*) \|_{\oper}  > 6 \sqrt{ (d_1+d_2) /n } \Bigr ] \leq (d_1 + d_2)^{-2}.
\$
\end{lemma}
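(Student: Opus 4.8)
The plan is to read Lemma~\ref{lem:grad_loss2} as an operator‑norm concentration bound for the truncated matrix‑mean estimator $\widehat\Sigma:=\frac1n\sum_{i=1}^n\cT(Y_i,X_i)$. Since the loss in \eqref{eq:estimator} is quadratic with $\nabla^2 L=2I$, one has $\nabla L(\mu\betas)=2\mu\betas-\frac2n\sum_{i=1}^n\cT(Y_i,X_i)=-2(\widehat\Sigma-\mu\betas)$, hence $\|\nabla L(\mu\betas)\|_{\oper}=2\|\widehat\Sigma-\mu\betas\|_{\oper}$; and Proposition~\ref{prop:1stein}, applied coordinatewise exactly as in the population display of \S\ref{sec:firstordertheory}, gives $\EE[Y\cdot S(X)]=\mu\betas$. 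So $\widehat\Sigma$ is a Catoni--Minsker robust estimator of its mean $\mu\betas$, and Lemma~\ref{lem:grad_loss2} is the matrix counterpart of Lemma~\ref{lemma:grad_loss1}, with the entrywise truncation replaced by the spectral truncation $\psi$. It therefore suffices to bound $\|\widehat\Sigma-\mu\betas\|_{\oper}$.

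I would prove this bound along the standard route for sub‑Gaussian estimation of a matrix mean \citep{minsker2016sub,catoni2012challenging}. Write $Z_i=Y_i\,S(X_i)\in\RR^{d_1\times d_2}$ and let $\tilde Z_i$ be its Hermitian dilation; taking $\phi$ odd (any odd member of the band works), the construction of $\psi$ gives $\widetilde{\cT(Y_i,X_i)}=\frac1\kappa\phi(\kappa\tilde Z_i)$ in the functional‑calculus sense, so $\|\widehat\Sigma-\mu\betas\|_{\oper}=\bigl\|\frac1{\kappa n}\sum_i\phi(\kappa\tilde Z_i)-\widetilde{\mu\betas}\bigr\|_{\oper}$ and it is enough to control $\pm\lambda_{\max}$ of this symmetric matrix. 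Split it into a deterministic truncation bias and a centered fluctuation. For the bias, the band yields the pointwise inequality $|\frac1\kappa\phi(\kappa x)-x|\le\frac\kappa2 x^2$ (from $\log(1+t)\le t$), which transfers through functional calculus to $\bigl\|\frac1\kappa\EE[\phi(\kappa\tilde Z_1)]-\widetilde{\mu\betas}\bigr\|_{\oper}\le\frac\kappa2 v$, where $v:=\|\EE[Z_1 Z_1^\top]\|_{\oper}\vee\|\EE[Z_1^\top Z_1]\|_{\oper}$. For the fluctuation, apply the matrix Laplace‑transform/Markov bound with subadditivity of the matrix cumulant generating function, using that the other band inequality $e^{\phi(x)}\le 1+x+\frac{x^2}2$ transfers to $\EE\,e^{\phi(\kappa\tilde Z_1)}\preceq I+\kappa\widetilde{\mu\betas}+\frac{\kappa^2}2\EE[\tilde Z_1^2]$; the trace in the Laplace bound contributes the dimension factor $d_1+d_2$, the bias enters as $O(\kappa v)$ and the stochastic part as $O(\log(d_1+d_2)/(\kappa n))$, and the value of $\kappa$ in the statement is the one balancing these, giving $\|\widehat\Sigma-\mu\betas\|_{\oper}=O(\sqrt{v\log(d_1+d_2)/n})$ with probability at least $1-(d_1+d_2)^{-2}$.

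The step I expect to be the actual obstacle --- the rest being generic robust‑mean machinery --- is to show the variance proxy obeys $v=O((d_1+d_2)M)$ rather than the trivial $O(d_1 d_2 M)$ coming from $\|\cdot\|_{\oper}\le\|\cdot\|_{\fro}$. Here I would use that $S(X)$ is a $d_1\times d_2$ matrix whose entries $s_0((X)_{jk})$ are i.i.d.\ with $\EE_{p_0}[s_0(U)]=0$ and $\EE_{p_0}[s_0^4(U)]\le M$, and that $Y$ depends on $X$ only through the scalar $\la X,\betas\ra$. Writing $\EE[Y^2 S(X)S(X)^\top]=\sum_{k=1}^{d_2}\EE[Y^2\,c_k c_k^{\top}]$ over the columns $c_k$ of $S(X)$, each diagonal entry is at most $\sqrt{\EE[Y^4]\,\EE_{p_0}[s_0^4(U)]}\le M$ by Cauchy--Schwarz, while for the off‑diagonal entries the mean‑zero independence of the entries, together with a coordinatewise use of Proposition~\ref{prop:1stein} exploiting the single‑index structure of $Y$, reduces the contribution to a low‑rank matrix whose operator norm is controlled by an $f$‑dependent constant times $\|\betas\|_{\fro}^2=1$. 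Summing over the $d_2$ columns gives $\|\EE[Y^2 S(X)S(X)^\top]\|_{\oper}=O(d_2 M+1)$, and symmetrically $O(d_1 M+1)$ for $\EE[Y^2 S(X)^\top S(X)]$, hence $v=O((d_1+d_2)M)$. A secondary technical nuisance is the non‑commutativity between the deterministic $\widetilde{\mu\betas}$ and the $\phi(\kappa\tilde Z_i)$ in the Laplace‑transform step; this is exactly why organizing the argument as bias plus centered fluctuation is convenient, and it can otherwise be handled at the level of the cumulant bound as in \citet{minsker2016sub}.
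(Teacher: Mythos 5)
Your overall architecture is the same as the paper's: identify $\nabla L(\mu\betas)$ as $-2$ times the deviation of the $\psi$-truncated empirical mean from $\EE[Y\cdot S(X)]=\mu\betas$, invoke the Catoni--Minsker dilation-plus-matrix-Laplace machinery (the paper simply cites Corollary 3.1 of \citet{minsker2016sub} rather than re-deriving it as you propose, but that is a presentational difference), choose $\kappa$ to balance bias against fluctuation, and reduce everything to showing the variance proxy $v=\|\EE[Y^2 S(X)S(X)^\top]\|_{\oper}\vee\|\EE[Y^2S(X)^\top S(X)]\|_{\oper}$ is $O((d_1+d_2)M)$. You also correctly identified that this last step is the only non-generic part.

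However, your proposed treatment of that step has a gap. Splitting $\EE[Y^2 c_kc_k^\top]$ into diagonal and off-diagonal parts and arguing that the off-diagonal block is ``a low-rank matrix whose operator norm is controlled by an $f$-dependent constant'' requires applying Stein's identity twice to $f^2(\la X,\betas\ra)s_0(X_{jk})s_0(X_{j'k})$, which produces terms like $\EE[(f')^2+ff'']$; these moments are not controlled by Assumption \ref{assume:moments}, so the bound you get is not uniform over the class of link functions the lemma covers, and the claim is asserted rather than proved. The paper avoids this entirely by never decomposing entrywise: for a unit vector $u$ it writes $u^\top\EE[Y^2S(X)S(X)^\top]u=\sum_{k=1}^{d_2}\EE\bigl[Y^2\,(S_{\cdot,k}(X)^\top u)^2\bigr]$, applies Cauchy--Schwarz to decouple $Y^2$ from $(S_{\cdot,k}(X)^\top u)^2$, and then bounds $\EE[(S_{\cdot,k}(X)^\top u)^4]\leq CM$ uniformly in $u$ using only that the entries of $S(X)$ are i.i.d., mean zero, with fourth moment at most $M$ (only paired index terms survive in the expansion). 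This yields $v\leq C(d_1\vee d_2)M$ with no reference to how $Y$ depends on $X$ and no assumptions beyond Assumption \ref{assume:moments}. You should replace your entrywise argument with this quadratic-form bound; the rest of your proposal then goes through.
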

\begin{proof}
See \S\ref{sec:proof:lemmaLgrad_loss2} for a detailed proof.
\end{proof}
By Lemma \ref{lem:grad_loss2} and the choice of $\lambda$, we conclude that        $\lambda > 2 \cdot \| \nabla L (\mu \beta ^*) \|_{\oper} $ with  probability at least $1-(d_1+ d_2)^{-2}$.  Thus by \eqref{eq:upper_quad5} we have
\#\label{eq:upper_quad6}
2 \| \Theta   \|_{\fro}^2 \leq 3 \lambda  / 2 \cdot \| \Gamma ^{(2)} \|_{\nuc} - \lambda /2 \cdot \| \Gamma^{(1)} \|_{\nuc} 
\#
which implies that $\| \Gamma^{(1)} \|_{\nuc}   \leq 3\cdot \| \Gamma ^{(2)}  \|_{\nuc} $. Moreover, by the subadditivity of rank, we obtain 
\$
\rank ( \Gamma ^{(2)}   )&  \leq \rank  \biggl ( \begin{bmatrix}
\Gamma_{11} /2 & \Gamma _{12} \\
0  & 0
\end{bmatrix} \biggr )  \\
& \qquad  + \rank  \biggl ( \begin{bmatrix}
\Gamma_{11} /2 & 0 \\
\Gamma_{21} & 0
\end{bmatrix} \biggr ) = 2r^*,
\$
which implies that  $ \| \Gamma ^{(2)}  \|_{\nuc} \leq \sqrt{ 2r^* } \cdot \| \Gamma ^{(2)} \|_{\fro}$
Then by \eqref{eq:upper_quad6} we obtain that $ \| \Theta  \| _{\fro} \leq 3 /\sqrt{2} \cdot \sqrt{r^*} \cdot \lambda $.  Finally, by triangle inequality for the nuclear norm, 
\$
\| \Theta  \|_{\nuc}  &= \| \Gamma  \|_{\nuc} \leq \| \Gamma^{(1)} \|_{\nuc} + \| \Gamma^{(2)} \|_{\nuc}  \\ &\leq 4\cdot  \| \Gamma^{(2)} \|_{\nuc} \leq 4  \sqrt{2 r^*}  \| \Gamma ^{(2)} \|_{\fro}  = 12 r^* \lambda.
\$
Thus we conclude the proof of Theorem \ref{thm:lowrank}.
\end{proof}

\subsection{Proof of Theorem \ref{thm:pr1}}
\begin{proof}
	We denote by $\hat W$ the solution of the optimization problem in \eqref{eq:sparsePCA1}. In addition, we let  $W^* = \betas {\betas }^\top $. In the following, we  establish an upper bound for $\| \hat W - W ^* \|_{\oper}$.

	Since $W^*$ is feasible for the optimization problem in \eqref{eq:sparsePCA1}, we have
	\#\label{eq:feasible1}	  \la \hat W , \tilde \Sigma \ra -  \lambda \| \hat W \|_{1} \geq
	 \la W^* , \tilde \Sigma \ra - \lambda \| W^* \|_{1}.
	 \#
	 We denote $\Sigma ^*  = \EE [ Y \cdot T (X) ] $. Note that $\betas$ is the leading eigenvector of $\Sigma^*$. Then \eqref{eq:feasible1} is equivalent to
	\# \label{eq:feasible2}
& 	\la \hat W - W^* , \tilde \Sigma - \Sigma^*  \ra - \lambda   \| \hat W \|_{1} +  \lambda \| W^* \|_1 \notag \\
	&\qquad    \geq \la  \Sigma^* , W^* - \hat W   \ra  .
\#
The following Lemma in \cite{vu2013fantope} (Lemma 3.1) establishes an upper bound for the first term on the left-hand side of \eqref{eq:feasible2}.

\begin{lemma} \label{lemma:curvature}
Let $\Omega \in \RR^{d  \times d} $ be a symmetric matrix and let $\lambda_1 \geq \lambda_2 \geq \ldots \lambda _d $ be the  eigenvalues of $\Omega$ in the  descending order. For any $\ell \in [d-1]$ such that $\lambda_\ell - \lambda_{\ell +1} >0$, let $\Pi_\ell \in \RR^{d\times d} $ be the projection matrix for the subspace spanned by the eigenvectors of $\Omega$ corresponding to $\lambda_1, \ldots, \lambda_{\ell}$. Then for any $\Lambda \in\RR^{d \times d}  $ satisfying $0 \preceq \Lambda\preceq I_d$ and $\trace(\Lambda) = \ell$, we have
\$
(\lambda_\ell - \lambda_{\ell +1} ) \cdot \| \Pi_k - \Lambda \|_{\fro}^2 \leq 2 \la \Omega, \Pi_\ell - \Lambda \ra.
\$
\end{lemma}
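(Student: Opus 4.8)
The plan is to give a self-contained argument for this curvature inequality (the restricted-strong-convexity estimate of \cite{vu2013fantope}); throughout I write $\delta = \lambda_k - \lambda_{k+1} > 0$ for the relevant eigengap and $\Pi_k$ for the projector onto the top-$k$ eigenspace of $\Omega$, which is well-defined precisely because $\delta > 0$. First I would use the trace constraint to normalize $\Omega$: for any scalar $c$ one has $\langle cI_d, \Pi_k - \Lambda\rangle = c\,(\trace \Pi_k - \trace \Lambda) = c(k-k) = 0$, so the claimed inequality is unchanged when $\Omega$ is replaced by $\widetilde\Omega := \Omega - \frac{1}{2}(\lambda_k+\lambda_{k+1})I_d$. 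The eigenvalues $\mu_i = \lambda_i - \frac{1}{2}(\lambda_k+\lambda_{k+1})$ of $\widetilde\Omega$ then satisfy $\mu_i \ge \delta/2$ for $i \le k$ and $\mu_i \le -\delta/2$ for $i > k$. Passing to the eigenbasis of $\Omega$ (both $\langle\cdot,\cdot\rangle$ and $\|\cdot\|_{\fro}$ are orthogonally invariant), we may assume $\Pi_k = D := \mathrm{diag}(I_k, 0)$ and that $\Lambda$ becomes a matrix $L$ with $0 \preceq L \preceq I_d$ and $\trace L = k$; set $E := D - L$.

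Next I would lower-bound the inner product. Since $\widetilde\Omega = \mathrm{diag}(\mu_1, \dots, \mu_d)$ in this basis, only the diagonal of $E$ contributes: $\langle \widetilde\Omega, E\rangle = \sum_i \mu_i E_{ii} = \sum_{i\le k}\mu_i(1 - L_{ii}) + \sum_{i>k}\mu_i(-L_{ii})$. Because diagonal entries of a matrix between $0$ and $I_d$ lie in $[0,1]$, the quantities $1 - L_{ii}$ (for $i\le k$) and $L_{ii}$ (for $i > k$) are nonnegative, and the corresponding $\mu_i$ has matching sign with $|\mu_i|\ge\delta/2$; hence $\langle \widetilde\Omega, E\rangle \ge \frac{\delta}{2}\bigl[\sum_{i\le k}(1-L_{ii}) + \sum_{i>k}L_{ii}\bigr]$. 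The trace identity $\sum_{i\le k}L_{ii} = k - \sum_{i>k}L_{ii}$ gives $\sum_{i\le k}(1-L_{ii}) = \sum_{i>k}L_{ii}$, so the bracket equals $2\sum_{i>k}L_{ii}$ and therefore $\langle \Omega, \Pi_k - \Lambda\rangle = \langle \widetilde\Omega, E\rangle \ge \delta\sum_{i>k}L_{ii}$.

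Then I would establish the matching bound $\|\Pi_k - \Lambda\|_{\fro}^2 = \|E\|_{\fro}^2 \le 2\sum_{i>k}L_{ii}$, the step that uses $0\preceq L\preceq I_d$ beyond its diagonal. Both $L - L^2 = L^{1/2}(I_d-L)L^{1/2}$ and $(I_d-L) - (I_d-L)^2 = (I_d-L)^{1/2}L(I_d-L)^{1/2}$ are positive semidefinite, so their diagonal entries are nonnegative; using $L_{ij} = L_{ji}$ this reads, for every $i$, $L_{ii}^2 + \sum_{j\ne i}L_{ij}^2 \le L_{ii}$ and $(1-L_{ii})^2 + \sum_{j\ne i}L_{ij}^2 \le 1 - L_{ii}$. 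Summing the first over $i > k$ and the second over $i \le k$, adding, and invoking once more $\sum_{i\le k}(1-L_{ii}) = \sum_{i>k}L_{ii}$, the right-hand side becomes $2\sum_{i>k}L_{ii}$ while the left-hand side collects exactly the decomposition $\|E\|_{\fro}^2 = \sum_{i\le k}(1-L_{ii})^2 + \sum_{i>k}L_{ii}^2 + \sum_{i\ne j}L_{ij}^2$. Combining with the previous paragraph, $2\langle\Omega, \Pi_k - \Lambda\rangle \ge 2\delta\sum_{i>k}L_{ii} \ge \delta\|\Pi_k - \Lambda\|_{\fro}^2$, as claimed.

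The main obstacle is the bookkeeping in the last step: ensuring the off-diagonal mass $\sum_{i\ne j}L_{ij}^2$ is absorbed without slack. The point is that each off-diagonal entry $L_{ij}^2$ is produced exactly once --- by the row-$i$ inequality, which is summed via the first bound if $i>k$ and via the second if $i\le k$ --- so no Cauchy--Schwarz step or extra constant is needed, and this is precisely what keeps the constant in front of $\|\Pi_k-\Lambda\|_{\fro}^2$ (equivalently the factor $\delta/2$ in the curvature bound) sharp.
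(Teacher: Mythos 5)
Your proof is correct. Note, however, that the paper does not actually prove this lemma: it is imported verbatim (typos included --- the statement conflates $\ell$ and $k$, which you resolved correctly by taking $\ell=k$) from the curvature lemma of Vu et al.\ (2013), so there is no in-paper argument to compare against. Your derivation is a clean, self-contained version of that result and follows the same two-step skeleton as the original: (i) exploit $\trace(\Pi_k)=\trace(\Lambda)=k$ to recenter $\Omega$ at $\tfrac12(\lambda_k+\lambda_{k+1})I_d$ and deduce $\la \Omega,\Pi_k-\Lambda\ra \ge \delta\sum_{i>k}L_{ii}$, and (ii) bound $\|\Pi_k-\Lambda\|_{\fro}^2 \le 2\sum_{i>k}L_{ii}$ using $0\preceq \Lambda\preceq I_d$. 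The only substantive difference is in step (ii): the original argument works at the matrix level, writing $\|\Pi_k-\Lambda\|_{\fro}^2 = \trace(\Pi_k)-2\trace(\Pi_k\Lambda)+\trace(\Lambda^2) \le 2\la \Pi_k, \Pi_k-\Lambda\ra = 2\la I_d-\Pi_k,\Lambda\ra$ via $\Pi_k^2=\Pi_k$ and $\Lambda^2\preceq\Lambda$, whereas you carry out an entrywise bookkeeping through the nonnegativity of the diagonals of $L-L^2$ and $(I_d-L)-(I_d-L)^2$. Both routes give the same sharp constant; the matrix-level identity is shorter, while your version makes explicit where every off-diagonal term $L_{ij}^2$ is absorbed, which is the point you flag at the end.
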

 Note that $W^*$ is the projection matrix for the subspace spanned by $\betas$.
Applying  Lemma \ref{lemma:curvature} to $\Sigma^*$ with $\ell = 1$, we have
\#\label{eq:apply_lemma_curvature}
\la \Sigma^* , W^* - \hat W \ra \geq C_0 /2 \cdot \| \hat W - W ^* \|_{\fro }^2,
\#
where $C_0 > 0$ is defined in \eqref{eq:apply2stein}. In addition, by H\"older's inequality, we have
\#\label{eq:apply_holder}
& \la \hat W - W^* , \tilde \Sigma - \Sigma^*  \ra \notag \\
&\qquad  \leq \| \tilde \Sigma  - \Sigma ^* \|_{\infty} \cdot \| \hat W - W^* \|_{1} .
\#
In what follows, we bound $\| \tilde \Sigma - \Sigma ^* \|_{\infty}$.
  \begin{lemma}\label{lemma:truncation2}
  Let $\tilde \Sigma$ be defined in \eqref{eq:estimate_cov} and we define $  \Sigma^*   = \EE [ Y \cdot T(X)]$. Under Assumption \ref{assume:moments}, for any  truncation level $\tau >0 $ in \eqref{eq:truncation}, with probablity at least $1- d^{-2}$, we have
  \#\label{eq:truncation_result}
 & \| \tilde \Sigma  - \Sigma ^* \|_{\infty}  \leq 9 M \cdot \tau^{-3}  \notag  \\
 &  \qquad \qquad +2  \tau^{3} \cdot \log d /n + 2\sqrt{   5M \cdot  \log d /n}.
  \#
    \end{lemma}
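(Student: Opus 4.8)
The plan is to control $\|\tilde\Sigma-\Sigma^*\|_\infty=\max_{j,k\in[d]}|\tilde\Sigma_{jk}-\Sigma^*_{jk}|$ one entry at a time and then take a union bound. Fix indices $j,k$, recall that $\Sigma^*_{jk}=\EE[Y\,T_{jk}(X)]$ with $T$ as in \eqref{eq:second_score}, and decompose
\[
\tilde\Sigma_{jk}-\Sigma^*_{jk}=\frac1n\sum_{i=1}^n\Bigl(\tilde Y_i\,\tilde T_{jk}(X_i)-\EE[\tilde Y\,\tilde T_{jk}(X)]\Bigr)+\Bigl(\EE[\tilde Y\,\tilde T_{jk}(X)]-\EE[Y\,T_{jk}(X)]\Bigr),
\]
where the first term is a centered empirical average of bounded variables and the second is a deterministic truncation bias. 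I would treat the first by Bernstein's inequality and the second by a moment estimate; the threshold $\tau$ stays free throughout, since the statement is for arbitrary $\tau>0$.

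For the bias, bound its absolute value by $\EE|Y\,T_{jk}(X)-\tilde Y\,\tilde T_{jk}(X)|$ and split $Y T_{jk}-\tilde Y\tilde T_{jk}=(Y-\tilde Y)T_{jk}+\tilde Y(T_{jk}-\tilde T_{jk})$, using $|Y-\tilde Y|\le|Y|\mathbf{1}(|Y|>\tau)$, $|T_{jk}-\tilde T_{jk}|\le|T_{jk}|\mathbf{1}(|T_{jk}|>\tau^2)$ and $|\tilde Y|\le|Y|$. On the relevant events I would replace $\mathbf{1}(|Y|>\tau)$ and $\mathbf{1}(|T_{jk}|>\tau^2)$ by $(|Y|/\tau)^3$ and $(|T_{jk}|/\tau^2)^{3/2}$, so that each term carries a factor $\tau^{-3}$; the leftover expectations $\EE[|Y|^4|T_{jk}(X)|]$ and $\EE[|Y|\,|T_{jk}(X)|^{5/2}]$ are bounded by a constant multiple of $M$ via H\"older's inequality, using $\EE(Y^6)\le M$ and the sixth-moment control of the score entries in Assumption~\ref{assume:moments} (for $j\ne k$, $T_{jk}(X)=s_0(X_j)s_0(X_k)$ and $\EE|s_0(X_j)s_0(X_k)|^3=(\EE|s_0(U)|^3)^2\le M$; the diagonal entries $T_{jj}(X)=s_0(X_j)^2-s_0'(X_j)$ are handled the same way). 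This produces $9M\tau^{-3}$ as the bias bound.

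For the fluctuation term, apply Bernstein's inequality to the i.i.d.\ mean-zero summands $\tilde Y_i\tilde T_{jk}(X_i)-\EE[\tilde Y\tilde T_{jk}(X)]$: these are bounded by $2\tau^3$ since $|\tilde Y|\le\tau$ and $|\tilde T_{jk}|\le\tau^2$, and their variance is at most $\EE[Y^2 T_{jk}(X)^2]\le 5M$ by Cauchy--Schwarz and Assumption~\ref{assume:moments}. Balancing the sub-Gaussian part $\sqrt{5M\log d/n}$ against the sub-exponential part $\tau^3\log d/n$ of the Bernstein tail at confidence level $d^{-4}$ gives, up to absolute constants, the bound $2\sqrt{5M\log d/n}+2\tau^3\log d/n$ for a single entry with probability at least $1-d^{-4}$. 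A union bound over the $d^2$ index pairs, together with the deterministic bias bound, then yields the inequality claimed in the lemma with probability at least $1-d^{-2}$.

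The moment bookkeeping is routine; the step I expect to require the most care is the bias estimate, where the H\"older exponents must be chosen so that the truncation bias decays like $\tau^{-3}$ rather than $\tau^{-2}$ — this is precisely where the sixth-moment hypothesis, as opposed to a fourth-moment one, is used — and where one must verify both that the constant collapses to $9M$ and that the sixth moment of the diagonal score $T_{jj}(X)=s_0(X_j)^2-s_0'(X_j)$ is genuinely controlled by $M$.
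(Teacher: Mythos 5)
Your proposal follows essentially the same route as the paper: split $\| \tilde \Sigma - \Sigma^* \|_{\infty}$ into a deterministic truncation bias plus a centered fluctuation, extract the $O(M\tau^{-3})$ bias from the sixth-moment hypotheses (your replacement of the indicators by $(|Y|/\tau)^3$ and $(|T_{jk}|/\tau^2)^{3/2}$ is just a repackaging of the paper's Cauchy--Schwarz-plus-Markov step, and it lands on the same H\"older exponents), and control the fluctuation entrywise by Bernstein's inequality followed by a union bound over the $d^2$ index pairs. The one step to fix is the variance bound: $\EE[Y^2 T_{jk}^2(X)]$ should be controlled by H\"older with exponents $(3, 3/2)$, exactly as you do in the bias estimate, rather than by Cauchy--Schwarz, since the latter requires $\EE[T_{jj}^4(X)]$ and hence eighth moments of $s_0$ on the diagonal, which Assumption~\ref{assume:moments} does not provide; with that substitution your argument coincides with the paper's.
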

  \begin{proof}
  See \S \ref{proof:lemma:truncation2}  for a detailed proof.
  \end{proof}
By this lemma, if we set $\tau = ( 1.5 M n / \log d)^{1/6}$, then with probability at least $1- d^{-2}$,
\#\label{eq:seperation}
\| \tilde \Sigma - \Sigma^* \|_{\infty}  & \leq ( 2 \sqrt{5} + 2\sqrt{6})\cdot \sqrt{M \log d / n} \\ &\leq 10 \sqrt{M \log d /n}.
\#
Thus by setting $\lambda = 10 \sqrt{ M \log d /n }  $ we have $\| \tilde \Sigma - \Sigma ^* \|_{\infty} \leq \lambda $ with probability at least $1- d^{-2} $.

Then combining \eqref{eq:feasible2}, \eqref{eq:apply_lemma_curvature}, and \eqref{eq:apply_holder} we have
\#\label{eq:bound_l1}
\lambda  \left ( \| \hat W - W^* \|_{1} -   \| \hat W \|_{1}  +  \| W^* \|_1  \right ) \nonumber \\ \geq  C_0 /2 \cdot \| \hat W - W ^* \|_{\fro }^2.
\#
Note that $W^* = \betas {\betas}^\top$ and that $\betas$ is $s^*$-sparse. We denote the support of $W^*$ by $\cJ$, which is given by
\$
\cJ = \left \{ (j,k ) \in [d] \times [d] \colon \beta_j^* \cdot \beta_k ^* \neq 0 \right \}.
\$
Then by separation of the $\ell_1$-norm, we have
\$
\| \hat W \|_{1} &= \| \hat W_{\cJ} \|_1 + \| \hat W _{\cJ^c} \|_1 \\ \nonumber
\| \hat W - W^* \|_1 &= \| \hat W _{\cJ} - W^* _{\cJ} \|_1 + \| \hat W _{\cJ^c} \|_1 ,
\$
which implies that
\#\label{eq:l1_norm}
&\| \hat W - W^* \|_{1} -   \| \hat W \|_{1} +  \| W^* \|_1 \notag \\ 
& \qquad =   \| \hat W_{\cJ} - W^*_\cJ \|_{1} -  \| \hat W_{\cJ} \|_1  + \| W^* _{\cJ}\|_1 \notag \\
& \qquad  \leq   2 \| \hat W_{\cJ} - W^*_\cJ\|_{1} \leq 2 {s^*}^2  \| \hat W - W ^* \|_{\fro}.
\#
Here the last inequality in \eqref{eq:l1_norm} follows from the fact that $|\cJ | = {s^*}^2. $
Combining \eqref{eq:bound_l1} and \eqref{eq:l1_norm}, we obtain
\#\label{eq:initialization}
 \| \hat W - W^* \|_{\fro} \leq 4/ C_0 \cdot s^* \lambda.
\#
Since $\hat \beta  $ is the leading eigenvector of $\hat W$, we have $\| \hat \beta  - \betas \|_2 \leq \sqrt{2} \| \hat W - W^* \|_{\fro} \leq 4 \sqrt{2} / C_0 \cdot s^* \lambda$, which concludes the proof.
\end{proof}

\subsection{Proof of Theorem \ref{thm:pr2}}
\begin{proof}
The proof is similar to that of Theorem \ref{thm:pr1}. In the case of sparse MIM, we denote $W^* = B^* {B^*}^{\top}$. Note that $\hat W$ is the solution to the optimization problem in \eqref{eq:sparsePCA2} and that  $\hat B$ consists of the top-$k$ eigenvectors of $\hat W$. Then by Corollary 3.2 in \cite{vu2013fantope},  we have
\#\label{eq:first_step_bound}
\inf_{O \in \mathbb{O}_k } \| \hat B - B^* O \|_{\fro} \leq \sqrt{2} \| \hat W - W^* \|_{\fro} .
\#
In what follows, we derive an upper bound for $\hat W - W^*$. Note that since $B^*$ is orthonormal, $\trace(W^*) = k$. Thus $W^*$ is feasible for \eqref{eq:sparsePCA2}, which implies
\# \label{eq:feasible22}
\la \hat W - W^* , \tilde \Sigma - \Sigma^*  \ra - \lambda   \| \hat W \|_{1}  + \lambda \| W^* \|_1   \geq  \notag \\\la  \Sigma^* , W^* - \hat W   \ra.
\#
Here we define  $\Sigma^* = \EE [ Y \cdot T(X)]$. Note that $W^*$ is the projection matrix for the subspace spanned by the top-$k$ leading eigenvectors of $\Sigma^*$. By Lemma \ref{lemma:curvature} with $\ell = k$, we have
\$
\la  \Sigma^* , W^* - \hat W   \ra  \geq \rho_0 / 2 \cdot \| \hat W - W^*\|_{\fro }^2 ,
\$
where $\rho_0$ is the smallest eigenvalue of $\EE [ \nabla ^2 f ( X B^*)]$. Similar to the proof of Theorem \ref{thm:pr1}, by H\"older's inequality and \eqref{eq:feasible22}, we have
\#\label{eq:some_inter}
\|
&  \tilde \Sigma - \Sigma^* \|_{\infty} \cdot \| \hat W -W^* \|_1 - \lambda \| \hat W \|_1+ \lambda \|W^* \|_1 \notag
 \\ 
 & \qquad  \geq
\rho_0 /2 \cdot \| \hat W - W^* \|_{\fro}^2 .
\#
By Lemma  \ref{lemma:truncation2}, if we set $\lambda = 10 \sqrt{ M \log d /n}$, with probability at least $1 - d^{-2}$, we have
\#\label{eq:set_regparam}
\| \hat \Sigma - \Sigma ^* \|_{\infty} \leq \lambda.
\#
 Note that the support of $W^*$ is
\$
\cJ \subseteq  \left \{ (j,k) \in [d]\times [d] \colon \|B_{j\cdot}^* \|_2 \cdot \|B_{k\cdot}^* \|_2\neq  0 \right \}.
\$
Since $B^*$ is $s^*$-row sparse, $| \cJ| \leq {s^*}^2$. Thus      \eqref{eq:l1_norm} also hold for the MIM. Combining \eqref{eq:some_inter}, \eqref{eq:set_regparam}, and \eqref{eq:l1_norm}, we   obtain
\#\label{eq:second_step_bound}
\| \hat W - W^*\|_{\fro} \leq  4  / \rho_0 \cdot s^* \lambda.
\#
Finally, combining \eqref{eq:first_step_bound} and \eqref{eq:second_step_bound}, we conclude the proof.
\end{proof}

\section{Proof of Auxiliary Results}
\subsection{Proof of Lemma \ref{lemma:grad_loss1}} \label{sec:proof:lemmaLgrad_loss1}
\begin{proof}
By definition of the loss function $L$ in \eqref{eq:estimator}, we have 
\$
\nabla    L (\mu \beta ^*) & =  2  \mu \beta^* - \frac{2}{n} \sum_{i=1}^n  \tilde Y_i \cdot \tilde S(X_i) \notag\\ &  =    \EE \bigl [2  Y_i \cdot S(X_i) \bigr ] - \frac{2}{n} \sum_{i=1}^n  \tilde Y_i \cdot \tilde S(X_i) .
\$
By triangle inequality, 
\#\label{eq:grad1}
 \|  \nabla    L (\mu \beta ^*) \|_{\infty} \notag  &\leq  \Bigl \| \EE \bigl [2  Y \cdot S(X) \bigr ] - \EE \bigl [ 2 \tilde Y \cdot \tilde S(X) \bigr ] \Bigr  \|_{\infty} \notag \\ &\qquad + \biggl \| \EE \bigl [ 2 \tilde Y \cdot \tilde S(X) \bigr ] - \frac{2}{n} \sum_{i=1}^n  \tilde Y_i \cdot \tilde S(X_i) \biggr \|_{\infty}.
\#
For any $j \in [d]$, by the definition of the truncated response $\tilde Y$ and truncated score $\tilde S$, we obtain 
\#\label{eq:grad2}
& \bigl | \EE \bigl [  \tilde Y  \cdot \tilde S_j(X) \bigr ]  - \EE \bigl [ Y \cdot S_j(X) \bigr ] \bigr |  \notag  \\ 
& \leq 
\Bigl |  \EE \Bigl \{ \tilde Y  \cdot \bigl [   \tilde S_j(X)  - S_j (X) \bigl ]\Bigr\} \Bigr | +  \bigl |  \EE \bigl [   ( \tilde Y - Y)  \cdot   S_j (X) \bigl ]  \bigr | \notag \\
&= \underbrace{\bigl |  \EE \bigl [  \tilde Y  \cdot      S_j (X) \cdot \ind \{ |S_j (X) | >  \tau \} \bigr] \bigr |}_{a_1} \notag \\
& \qquad + \underbrace{\bigl |  \EE \bigl [   Y  \cdot      S_j (X) \cdot \ind \{ |Y | >  \tau  \} \bigr] \bigr |}_{a_2}.
\#
By Cauchy-Schwarz inequality, we have
\#\label{eq:grad3}
a_1^2 & \leq \EE \bigl [ \tilde Y^2 S_j^2 (X)  \bigr ]\cdot \PP \bigl [ |S_j(X) | \geq \tau   \bigr ] \notag \\
& \leq  \sqrt{ \EE  (\tilde Y^4) \cdot \EE \bigl [ S_j^4 (X)  \bigr ]} \cdot \EE \bigl [ S_j^4 (X) \bigr ]   \cdot  \tau ^{- 4} \notag \\
& = M^2\cdot \tau ^{-4},
\#
where the second inequality follows from Chebyshev's inequality. Similarly, for $a_2$ we have
\#\label{eq:grad4}
a_2 ^2 &  \leq \EE \bigl [   Y^2 S_j^2 (X)  \bigr ]\cdot \PP \bigl ( |Y  | \geq \tau \bigr ) \notag \\
&\leq  \sqrt{ \EE  (\tilde Y^4) \cdot \EE \bigl [ S_j^4 (X)  \bigr ]} \cdot \EE (Y^4 ) \cdot \tau^{-4}  \notag \\
& \leq M^2  \cdot \tau^{-4}.
\#
Thus combining \eqref{eq:grad2}, \eqref{eq:grad3}, and \eqref{eq:grad4}, we conclude that  
\$
\Bigl | \EE \bigl [  \tilde Y  \cdot \tilde S_j(X) \bigr ]  - \EE \bigl [ Y \cdot S_j(X) \bigr ] \Bigr |  \leq a_1 + a_2 \leq 2 M   \cdot \tau^{-2}
\$
for all $j \in [d]$. Thus choosing $\tau = 2   (M  \cdot n /\log d)^{1/4}$, we have 
\#\label{eq:grad5}
& \Bigl \| \EE \bigl [  \tilde Y  \cdot \tilde S_j(X) \bigr ]  - \EE \bigl [ Y \cdot S_j(X) \bigr ] \Bigr \| _{\infty} \notag \\
&\qquad  \leq 1/2      \cdot \sqrt{ M\cdot  \log d/n}.
\#
 Furthermore, under Assumption  \ref{assume:moments},  the variance of $\tilde Y \cdot \cdot \tilde S_j(X)$ is bounded by 
 \$
 \Var[ \tilde Y\cdot \tilde S_j(X) ] & \leq \EE [ \tilde Y^2 \cdot \tilde S_j^2 (X) ] \notag \\ 
 & \leq \EE  [ Y^2 \cdot  S_j^2 (X) ] \notag \\ 
 &\leq \sqrt{ \EE (Y^4) \cdot \EE [ S^4_j (X) ] } \leq M.
 \$
Thus for the second term in \eqref{eq:grad1}, since $|\tilde Y \cdot \tilde S_j (X) | \leq \tau^2$, by the Bernstein  inequality in \cite{boucheron2013concentration} (Theorem 2.10), for any $j \in [d]$ and any $t >0$, 
we have 
\#\label{eq:concentration1}
& \PP \biggl \{ \biggl |\frac{1}{n}  \sum_{i=1}^n  \tilde Y_i \cdot \tilde S_j (X_i)  - \EE \bigl [ \tilde Y \cdot \tilde S_j(X) \bigr ] \biggr | \notag \\
& \qquad \qquad \geq \sqrt{ \frac{ 2M \cdot t}{n} } + \frac{\tau^2 \cdot t }{3n}\biggr \}  \leq \exp (-t) .
\#
Taking union bound over $j \in [t]$ in  \eqref{eq:concentration1} yields 
\#\label{eq:concentration2}
& \PP \biggl \{ \biggl \|\frac{1}{n}  \sum_{i=1}^n  \tilde Y_i \cdot \tilde S_j (X_i)  - \EE \bigl [ \tilde Y \cdot \tilde S_j(X) \bigr ] \biggr \|_{\infty} \\
&\qquad   \qquad  \geq \sqrt{ \frac{ 2 M \cdot t}{n} } + \frac{\tau^2 \cdot t }{3n}\biggr \}  \leq \exp (-t + \log d). \notag
\#
Finally, we plug in $\tau = 2 ( M \cdot  n /  \log d )^{1/4} $  and set $t =3  \log d $ in  \eqref{eq:concentration2} to obtain that 
\#\label{eq:grad6}
& \biggl \|\frac{1}{n}  \sum_{i=1}^n  \tilde Y_i \cdot \tilde S_j (X_i)  - \EE \bigl [ \tilde Y \cdot \tilde S_j(X) \bigr ] \biggr \|_{\infty} \notag \\
 & \qquad \qquad \leq  (4 +\sqrt{6})   \sqrt{ \frac{ M \cdot \log d}{n} }
\#
with probability at least $1 - d^{-2}$. Finally, combining \eqref{eq:grad1}, \eqref{eq:grad5}, and \eqref{eq:grad6}, we conclude the proof.
\end{proof}

\subsection{Proof of Lemma \ref{lem:grad_loss2}}\label{sec:proof:lemmaLgrad_loss2}
\begin{proof} For loss function $L$ defined in \eqref{eq:estimator} in the matrix setting, we have
\#\label{eq:compute_grad}
 & \nabla L(\mu \beta^*)  =  2 \mu \beta^* - \frac{2} {\kappa \cdot n} \sum_{i=1}^n    \psi \bigl [ \kappa \cdot  Y_i \cdot     S(X_i) \bigr ]\notag \\ 
 &
 \qquad = 2 \EE [ Y \cdot S(X) ] -  \frac{2} {\kappa \cdot n} \sum_{i=1}^n  \psi \bigl [ \kappa \cdot  Y_i \cdot     S(X_i) \bigr ] . 
 \#
Here the last equality follows from the generalized Stein's identity.  In the sequel, we apply results in  \cite{minsker2016sub} to bound $ \| \nabla L(\mu \beta^*)\|_{\oper}$. To begin with, we first consider the operator norm of $\EE [ Y^2 \cdot S (X) S(X) ^{\top }]  \in \RR^{d_! \times d_2}$ and $\EE [ Y^2\cdot  S(X) ^\top S(X) ] \in \RR^{d_2 \times d_2}$.  For notational simplicity, we denote by $S_{j ,\cdot} (\cdot)\in \RR^{d_2}$  $S_{\cdot , k}  (\cdot) \in \RR^{d_1}$  the $j$-th row and $k$-the column of the score function $S(\cdot)$, respectively. For any $u \in \cS^{d_1 -1}$, by Cauchy-Schwarz inequality we have 
\#\label{eq:op1}
& \EE [ Y^2 \cdot  u ^\top S (X) S(X) ^\top u ]  \notag  \\
 & \qquad 
 = \sum_{k=1}^{d_2}  \EE \bigl \{  [ Y^2 \cdot S_{\cdot ,k }(X) ^\top u ]^2 \bigr \} \notag \\
 & \qquad  \leq d_2 \cdot  \sqrt{ \EE (Y^4) \cdot  \EE  \bigl \{   [ S_{\cdot, 1} (X)^\top u  ]^4   \bigr \} } ,
\#
where we use the fact that the entries of $S(X)$ are i.i.d. Since $\EE [ S_{ij} (X) ]= 0$ and $\EE [ S_{ij}^4 (X) ] \leq M$, by Cauchy-Schwarz inequality we obtain that 
\#\label{eq:op2}
& \EE  \bigl \{   [ S_{\cdot, 1} (X)^\top u  ]^4   \bigr \}  \notag \\
&\qquad  = \sum_{j _1=1}^d \sum_{j_2=1}^d \EE [S_{j_1, 1}(X)^2 \cdot S_{j_2 ,1} ^2 (X) ] \cdot   u_{j_i}^2 u_{j_2 }^2  \notag \\
& \qquad  \leq \sum_{j _1=1}^d \sum_{j_2=1}^d  \sqrt{ \EE [ S_{j_1, 1}^4 (X) ] \cdot \EE [ S_{j_2, 1}^4 (X) ] }\cdot    u_{j_i}^2 u_{j_2 }^2  \notag \\
&\qquad \leq M  \sum_{j _1=1}^d \sum_{j_2=1}^d   u_{j_i}^2 u_{j_2 }^2  = M.
\#
Thus combining \eqref{eq:op1} and \eqref{eq:op2} we obtain that 
\$
\EE [ Y^2 \cdot  u ^\top S (X) S(X) ^\top u ] \leq d_2 \cdot M,
\$
  which implies that $  \| \EE [ Y^2 \cdot    S (X) S(X) ^\top   ] \|_{\oper} \leq d_2 \cdot M$. Similarly, we  obtain~$ \| \EE [ Y^2 \cdot    S (X) ^\top S(X)   ] \|_{\oper} \leq  d_1 \cdot M.$
Thus by Corollary 3.1 in \cite{minsker2016sub}, we have
\#\label{eq:op3}
&\PP \biggl  \{   \biggl \| \frac{1 } {\kappa \cdot n} \sum_{i=1}^n  \psi \bigl [ \kappa \cdot  Y_i \cdot     S(X_i) \bigr ] - \EE [ Y \cdot S(X) ] \biggr \|_{\oper} \geq  \frac{t} {\sqrt{n}} \biggr \}  \notag \\
&\qquad  \leq 2 (d_1 + d_2) \exp\bigl [ - \kappa t   \sqrt{n}  + \kappa^2 (d_1 + d_2) M /2 \bigr ]
\#
for any $ t >0$ and $\kappa >0$. We set $$\kappa = 2  \sqrt{n \cdot \log (d_1 + d_2)}  /\sqrt{  (d_1 + d_2 )M}$$  and $t = \sqrt{ (d_1 + d_2 ) M } \cdot s $ in \eqref{eq:op3}, which implies that 
\#\label{eq:op4}
& \PP \biggl  \{   \biggl \| \frac{1 } {\kappa \cdot n} \sum_{i=1}^n   \psi \bigl [ \kappa \cdot  Y_i \cdot     S(X_i) \bigr ] - \EE [ Y \cdot S(X) ] \biggr \|_{\oper} \notag \\
&\qquad \qquad \geq   \sqrt{ \frac{ (d_1+d_2) M}{n} } \cdot s \biggr \}  \notag \\
& \qquad \leq 2 (d_1 + d_2 ) \cdot \exp \bigl [ -   2 \sqrt{ \log (d_1 + d_2) } \cdot s   \notag \\
& \qquad \qquad +2  \cdot \log  (d_1+d_2)  \bigr ].
\#
Now we   set  $s = 3  \cdot \sqrt{ \log (d_1 + d_2)}$, which implies that  the right-hand side of \eqref{eq:op4} is less than \$
&2 (d_1+ d_2) \cdot  \exp \bigl [ -   6  \log (d_1 + d_2)     + 2  \cdot \log  (d_1+d_2)  \bigr ] \\
&\quad \leq (d_1+ d_2)^2 \cdot \exp \bigl [ -  4 \cdot  \log (d_1 + d_2)      \bigr ]  = (d_1+ d_2)^{-2}.
\$
Therefore, combining \eqref{eq:compute_grad}
 and \eqref{eq:op4} we obtain  that 
 \$
 \| \nabla L (\mu \beta ^* ) \|_{\oper} \leq 6   \sqrt{ (d_1+ d_2)  \cdot M/ n} \cdot \sqrt{ \log (d_1 + d_2)} 
 \$
with probability at least $1 - (d_1 + d_2)^{-2}$, which  concludes the proof.

  \end{proof}


\subsection{Proof of Lemma \ref{lemma:truncation2}} \label{proof:lemma:truncation2}

\begin{proof}

By triangle inequailty, we have
\#\label{eq:triangle}
\| \tilde \Sigma - \Sigma^* \|_{\infty} \leq  \| \tilde \Sigma - \EE   \tilde \Sigma   \|_{\infty } + \| \EE \tilde \Sigma - \Sigma ^* \|_{\infty }.
\#
	In the sequel, we bound the second term on the right-hand side of \eqref{eq:triangle}, which controls the bias of truncation.
	For each $j, k \in [d]$, we have
	\#\label{eq:bias_3terms}
	& \left |    \EE \tilde \Sigma_{jk} - \Sigma ^*_{jk} \right | \leq  \left | \EE  \bigl [  \tilde Y\cdot  \tilde T _{jk}(X) \bigr ]  - \EE \bigl [   Y \cdot   T_{jk}(X) \bigr ]  \right |  \notag \\
	&  \qquad  \leq \left |  \EE \bigl \{   \tilde Y   \cdot \bigl [ \tilde T _{jk}(X)  -   T_{jk}(X)\bigr ] \bigr \}  \right | \notag \\
	&\qquad \qquad + \left | \EE \bigl [  ( \tilde Y  - Y )  \cdot T_{jk}(X) \bigr ]\right | .
	\#
	For the first term in \eqref{eq:bias_3terms}, note that \$\tilde T_{jk} (X) -   T_{jk} (X)  =   T_{jk} (X)  \cdot \ind \{ |T_{jk} (X) | \geq \tau^2 \}. \$   Then by Cauchy-Schwarz inequality we have
	\#\label{eq:bias11}
	& \left |  \EE \left \{  \tilde Y \cdot    \left [   \tilde T_{jk}(X) -   T_{jk}(X) \right ] \right \} \right | ^2 \notag \\
	& \qquad = \left | \EE \left [ \tilde Y \cdot  T _{jk}(X) \cdot \ind \{ | T_{jk}(X)| \geq \tau^2  \} \right ] \right|^2 \notag \\
	&\qquad \leq \EE   \left [  \tilde Y^2 \cdot T_{jk}^2(X)   \right ] \cdot \PP  \left [  | T_{jk}(X) | \geq \tau ^2  \right ] .
	\#
  Furthermore, by H\"older's inequality, we have
  \#\label{eq:bias11a}
& \EE \left [ \tilde Y ^2 \cdot  T_{jk}^2 (X) \right ] \leq \left [\EE (\tilde Y^6) \right ]^{1/3} \cdot \left \{  \EE \left [ | T_{jk}(X) | ^3 \right ]  \right \}^{2/3} \notag \\
&\qquad  \leq \left [\EE (  Y^6) \right ]^{1/3} \left \{  \EE  \left [ | T_{jk}^3 (X) |     \right ]  \right \}^{2/3} .
  \#
	If $j \neq k$, by the definition of $T(x)$  in \eqref{eq:second_score}, we have $T_{jk} (x) = S_j(x) \cdot S_k(x)$, $\forall x \in \RR^d$.
	Then by Cauchy-Schwarz inequality, we have
	\#\label{eq:bound_tjk1}
	& \EE \left [ | T_{jk}^3 (X) |  \right ]    = \EE \left [ | S_j(X)|^3 \cdot | S_k(X) |^3 \right  ]  \notag \\ & \qquad 
	\leq  \sqrt{   \EE    [ S_j^6(X)    ] \cdot \EE  [ S_k^6 (X)   ]  }  = \EE    [ S_j^6(X)    ] .
	\#
	In addition, if $j = k$, by \eqref{eq:second_score}, $T_{jj}(x) = S_j^2 (x) - s_1 (x_j) $. Since $(a + b)^3 \leq 4 (a^3 + b^3)$ for any $a, b > 0$, we have
	\#\label{eq:bound_tjk2}
	\EE \left [ | T_{jj}^3(X) |   \right ] \leq 4 \EE [ S_j^6(X) ] + 4 \EE \left[ |s_1 ^3 (X_j)|  \right] .
	\#
	Moreover, by \eqref{eq:bias11}, \eqref{eq:bias11a}, and  the Markov's inequality that 
	 \$
	\PP  \left [  | T_{jk}(X) | \geq \tau ^2  \right ] \leq \EE \left [ | T_{jk}^3(X) |   \right  ]\cdot \tau^{-6} ,
	 \$ 
    we further have
   \#\label{eq:bias11b}
 & \left |  \EE \bigl \{   \tilde Y   \cdot \bigl [ \tilde T _{jk}(X)  -   T_{jk}(X)\bigr ] \bigr \}  \right | ^2 \notag \\
 &\qquad  \leq \left [ \EE ( Y^6)  \right ]^{1/3} \cdot \left \{ \EE \left [ | T_{jk}^3(X) |   \right  ] \right \}^{5/3}   \cdot \tau^{- 6}  \notag \\
 &\qquad \leq 32 M^2 \cdot \tau^{-6} .
   \#
Here the last inequality follows from combining Assumption \ref{assume:moments}, \eqref{eq:bound_tjk1}, and \eqref{eq:bound_tjk2}.

	Similarly, for the second term in \eqref{eq:bias_3terms}, by the H\"older's inequality and the Markov's inequality we obtain that 
	\#\label{eq:bias12}
	&\left |\EE \left [  ( \tilde Y  - Y ) \cdot T_{jk}(X)    \right ] \right | ^2 \notag \\
	&\qquad  \leq  \left [ \EE    (Y^6)\right ] ^{1/3} \cdot \left \{ \EE \left [  |T_{jk}^3 (X) |\right] \right \}^{2/3} \cdot  \PP ( | Y | \geq \tau ) \notag\\
		& \qquad  \leq \left [ \EE    (Y^6)\right ] ^{4/3}\cdot  \left \{ \EE \left [  |T_{jk}^3 (X) |\right] \right \}^{2/3} \cdot \tau^{-6} \notag \\
		&\qquad  \leq 4 M^2 \cdot \tau^{-6} .
	\#
Thus, combining \eqref{eq:bias_3terms}, \eqref{eq:bias11b}, and  \eqref{eq:bias12}, we~have 
\#\label{eq:bias_final}
\| \EE \tilde \Sigma - \Sigma ^* \|_{\infty} \leq 9  M \cdot \tau^{-3}.
\#

In what follows, we give a high-probability bound on $\| \tilde \Sigma - \EE \tilde \Sigma \|_{\infty}$ using concentration inequalities, which combined with \ref{eq:bias_final}, concludes the proof.

 For any $j,k \in [d], $ note that $ | \tilde Y \cdot \tilde T_{jk }(X) | \leq \tau^3 $. In addition, by assumption \ref{assume:moments}, its  variance
is bounded by
\$
& \Var\left [ \tilde Y \cdot \tilde T_{jk }(X) \right ] \leq \EE \left [  Y ^2 \cdot T_{jk}^2 (X) \right] \notag \\
&\qquad  \leq  \left [  \EE ( Y^6) \right ]^{1/3}  \cdot \left \{ \EE \left [  |T_{jk}^3 (X) |\right] \right \}^{2/3} \leq 2 M.
\$
Now we apply the Bernstein's inequality \citep{boucheron2013concentration} (Theorem 2.10)  to $\{ \tilde Y _i\cdot \tilde T_{jk}(X_i)\}_{i\in[n]}$ and obtain that
\#\label{eq:tail_trunc}
& \PP \biggl  \{ \biggl | \frac{1}{n} \sum_{i=1}^n \tilde Y _i \cdot \tilde T_{jk}(X_i)  - \EE \left  [ \tilde Y \cdot \tilde T_{jk} (X)   \right ]\biggr  |  \notag \\
&\qquad    \geq  \sqrt{ \frac{4 M\cdot t }{n} }+ \frac{ \tau^3 \cdot t }{ 3n}  \biggr   \} \leq 2 \exp (-t).
\#
Taking a union bound over $j, k \in [d]$ in \eqref{eq:tail_trunc}, we obtain that
\#\label{eq:union_bound}
&  \PP \left [  \| \tilde \Sigma - \EE \tilde \Sigma \|_{\infty} \geq \sqrt{  4M\cdot t /n   }+  \tau^3 \cdot t /  (3n)    \right ] \notag \\ 
& \qquad  \leq 2 \exp ( -t + 2 \log d ).
\#
Choosing $t = 5 \log d$ in \eqref{eq:union_bound}, we obtain that 
\#\label{eq:concentration_final}
& \| \tilde \Sigma - \EE \tilde \Sigma  \|_{\infty} \notag \\
&\qquad \leq 2   \sqrt{  5M \log d /n} +2  \tau^{3} \cdot  \log d /n
\#
 holds with probability at least $1 - d^{-2}$. Finally, combining \eqref{eq:bias_final} and \eqref{eq:concentration_final}, we complete the proof of Lemma~\ref{lemma:truncation2}.
\end{proof}

\end{document}